\title{Transfer principles for Galois cohomology and Serre's conjecture II}
\author{Diego Izquierdo\\
\small
Ecole polytechnique\footnote{At Universit\'e Paris Cit\'e since September 2025.} \\
\small
\texttt{izquierdo@imj-prg.fr}
\and
 Giancarlo Lucchini Arteche\\
 \small
 Universidad de Chile\\
  \small
\texttt{luco@uchile.cl} 
 }
\date{}
\titleformat{\section}[hang]{\center\Large\bf}{\thesection.}{0.5cm}{}
\DeclareSymbolFont{cyrletters}{OT2}{wncyr}{m}{n}
\DeclareMathSymbol{\Sha}{\mathalpha}{cyrletters}{"58}
\DeclareMathSymbol{\Brusse}{\mathalpha}{cyrletters}{"42}
\theoremstyle{plain}
\newtheorem{theorem}{Theorem}[section]
\newtheorem{MT}{Theorem}
\newtheorem{lemma}[theorem]{Lemma}
\newtheorem{proposition}[theorem]{Proposition}
\newtheorem{corollary}[theorem]{Corollary}
\newtheorem{definition}[theorem]{Definition}
\newtheorem{conj}[theorem]{Conjecture}
\newtheorem*{cor}{Corollary}
\theoremstyle{definition}
\newtheorem{remarque}[theorem]{Remark}
\newcommand \gal {{\rm{Gal\,}}}
\renewcommand{\cal}[1]{\mathcal{#1}}
\newcommand \Br {{\rm{Br}}}
\renewcommand \dim {{\rm{dim\,}}}
\newcommand \cd {{\rm{cd\,}}}
\newcommand{\SL}{\mathrm{SL}}
\newcommand{\K}{\mathrm{K}^\mathrm{M}}
\newcommand{\kk}{\mathrm{k}^\mathrm{M}}
\newcommand{\HS}{\mathrm{HS}}
\newcommand{\PHS}{\mathrm{PHS}}
\newcommand{\ad}{\mathrm{ad}}
\newcommand{\sep}{\mathrm{sep}}
\newcommand{\Csc}{C_\mathrm{sc}}
\newcommand{\bb}[1]{\mathbb{#1}}
\newcommand \Z {{\mathbb Z}}
\newcommand \N {{\mathbb N}}
\renewcommand{\ker}{\mathrm{Ker}}
\begin{document}

\maketitle

\begin{abstract}
In this article, we prove several transfer principles for the cohomological dimension of fields. Given a fixed field $K$ with finite cohomological dimension $\delta$, the two main ones allow to:
\begin{itemize}
    \item[-] construct totally ramified extensions of $K$ with cohomological dimension $\leq \delta - 1$ when $K$ is a complete discrete valuation field;
    \item[-] construct algebraic extensions of $K$ with cohomological dimension $\leq \delta-1$ and satisfying a norm condition.
\end{itemize}
We then apply these results to Serre's conjecture II and to some variants for fields of any cohomological dimension that are inspired by conjectures of Kato and Kuzumaki. In particular, we prove that Serre's conjecture II for characteristic $0$ fields implies Serre's conjecture II for positive characteristic fields.
\\

\textbf{MSC Classes:} 11E72, 12G05, 12G10, 14G05, 19D45, 20G10.\\

\textbf{Keywords:} Galois cohomology, cohomological dimension, Milnor $\mathrm{K}$-theory, zero-cycles, algebraic groups, principal homogeneous spaces.
\end{abstract}

\section{Introduction}

\subsection*{Kato and Kuzumaki's conjectures}

In 1986, in the article \cite{KK}, Kato and Kuzumaki stated a set of conjectures which aimed at giving a diophantine characterization of cohomological dimension of fields. For this purpose, they introduced some properties of fields which are variants of the classical $C_i$-property and which involve Milnor $\mathrm{K}$-theory and projective hypersurfaces of small degree. They hoped that those properties would characterize fields of small cohomological dimension.\\

More precisely, fix a field $K$ and two non-negative integers $q$ and $i$. Let $\K_q(K)$ be the $q$-th Milnor $\mathrm{K}$-group of $K$. For each finite extension $L$ of $K$, one can define a norm morphism $N_{L/K}: \K_q(L)\rightarrow \K_q(K)$ (see Section 1.7 of \cite{Kat}). Thus, if $Z$ is a scheme of finite type over $K$, one can introduce the subgroup $N_q(Z/K)$ of $\K_q(K)$ generated by the images of the norm morphisms $N_{L/K}$ when $L$ runs through the finite extensions of $K$ such that $Z(L) \neq \emptyset$. One then says that the field $K$ is $C_i^q$ if, for each $n \geq 1$, for each finite extension $L$ of $K$ and for each hypersurface $Z$ in $\mathbb{P}^n_{L}$ of degree $d$ with $d^i \leq n$, one has $N_q(Z/L) = \K_q(L)$. For example, the field $K$ is $C_i^0$ if, for each finite extension $L$ of $K$, every hypersurface $Z$ in $\mathbb{P}^n_{L}$ of degree $d$ with $d^i \leq n$ has a zero-cycle of degree 1. The field $K$ is $C_0^q$ if, for each tower of finite extensions $M/L/K$, the norm morphism $N_{M/L}:\K_q(M)\rightarrow \K_q(L)$ is surjective.\\

Kato and Kuzumaki conjectured that, for $i \geq 0$ and $q\geq 0$, a perfect field is $C_i^q$ if, and only if, it is of cohomological dimension at most $i+q$. This conjecture generalizes a question raised by Serre in \cite{SerreCohGal} asking whether the cohomological dimension of a $C_i$-field is at most $i$. As it was already pointed out at the end of Kato and Kuzumaki's original paper \cite{KK}, Kato and Kuzumaki's conjecture for $i=0$ follows from the Bloch-Kato conjecture (which has been established by Rost and Voevodsky, cf.~\cite{Riou}): in other words, a perfect field is $C_0^q$ if, and only if, it is of cohomological dimension at most $q$. However, it turns out that the conjectures of Kato and Kuzumaki are wrong in general. For example, Merkurjev constructed in \cite{Merkurjev} a field of characteristic 0 and of cohomological dimension 2 which did not satisfy property $C^0_2$. Similarly, Colliot-Th\'el\`{e}ne and Madore produced in \cite{CTM} a field of characteristic 0 and of cohomological dimension 1 which did not satisfy property $C^0_1$. These counter-examples were all constructed by a method using transfinite induction due to Merkurjev and Suslin. 

\subsection*{Higher versions of Serre's conjectures}

In the article \cite{ILA}, we introduced several variants of Kato and Kuzumaki's $C_i^q$ properties, by replacing hypersurfaces of low degree by homogeneous spaces of linear algebraic groups. One of those variants can be stated as follows: given a non-negative integer $q$, we say that a field $K$ has the $C_{\PHS}^q$ property if, for each finite extension $L$ of $K$ and for each principal homogeneous space $Z$ under a smooth linear connected algebraic group over $L$, one has $N_q(Z/L) = K_q^M(L)$. According to the Main Theorem of \cite{ILA}, the $C_{\PHS}^q$ property characterizes perfect fields with cohomological $\leq q+1$ and is hence a good replacement for Kato and Kuzumaki's $C_1^q$ property.

In the particular case where $q=0$, our result recovers the zero-cycle version of Serre's conjecture I, which is a classical theorem by Steinberg \cite{Steinberg}:

\begin{theorem}[Serre's conjecture I]\label{conj Serre I}
Let $K$ be a perfect field of cohomological dimension $\leq 1$. Then every principal homogeneous space under a connected linear $K$-group has a rational point.
\end{theorem}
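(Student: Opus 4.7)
The approach is the classical dévissage reducing to the case of a semisimple simply connected group, combined with Steinberg's theorem. Since $K$ is perfect, the unipotent radical $R_u(G)$ is defined over $K$, admits a composition series with successive quotients isomorphic to $\mathbb{G}_a$, and hence (together with all its twisted forms) satisfies $H^1(K, R_u(G)) = 0$ because $H^1(K, \mathbb{G}_a) = 0$. The non-abelian cohomology sequence associated to
\[
1 \to R_u(G) \to G \to G^{\mathrm{red}} \to 1
\]
then yields a bijection $H^1(K, G) \simeq H^1(K, G^{\mathrm{red}})$, reducing us to the case where $G$ is reductive.

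Next, the sequence $1 \to G^{\mathrm{der}} \to G \to T \to 1$, with $T = G/G^{\mathrm{der}}$ a torus and $G^{\mathrm{der}}$ semisimple, combined with the classical vanishing $H^1(K, T) = 0$ for tori over perfect fields of $\cd \leq 1$ (a consequence of Hilbert 90, the vanishing of $\Br(K)$, and a flasque resolution of $T$), gives a surjection $H^1(K, G^{\mathrm{der}}) \twoheadrightarrow H^1(K, G)$. Similarly, the central isogeny $1 \to \mu \to G_{\mathrm{sc}} \to G^{\mathrm{der}} \to 1$, with $\mu$ a finite central commutative $K$-group scheme, combined with the vanishing $H^2(K, \mu) = 0$ (valid for perfect $K$ of $\cd \leq 1$), gives a surjection $H^1(K, G_{\mathrm{sc}}) \twoheadrightarrow H^1(K, G^{\mathrm{der}})$. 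We are thus reduced to showing $H^1(K, G) = 0$ when $G$ is semisimple simply connected.

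This last assertion is the heart of the result and is the content of Steinberg's theorem. Its proof proceeds in two stages: first, show that $G$ is quasi-split over $K$, i.e.\ admits a $K$-rational Borel subgroup, using arguments on the variety of Borel subgroups together with a classification-based analysis of inner forms; second, for $G$ quasi-split simply connected, kill any class in $H^1(K, G)$ by an explicit construction based on a $K$-rational Borel, its unipotent radical, and the Bruhat decomposition. The main obstacle is precisely this step: while the dévissages above are formal consequences of standard abelian vanishing results, proving that a simply connected semisimple group has trivial $H^1$ over a field of $\cd \leq 1$ requires serious input from the structure theory of algebraic groups, and was the original insight of Steinberg.
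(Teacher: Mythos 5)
The paper offers no proof of this statement: it is quoted as a classical theorem and attributed to Steinberg \cite{Steinberg}, and your argument is in essence the same, since after your standard (and correct) d\'evissage through the unipotent radical, the coradical torus and the central isogeny --- all of whose vanishing inputs ($H^1$ of split unipotent groups and of tori, $H^2$ of the finite multiplicative center, taken fppf in positive characteristic) are indeed available over a perfect field of cohomological dimension $\leq 1$ --- the heart of the matter is still the cited theorem of Steinberg for semisimple simply connected groups. One small caveat: your sketch of that heart is not quite how Steinberg proceeds --- he shows, via regular elements and his cross-section of the adjoint quotient, that every class in $H^1$ of a quasi-split group comes from a maximal torus, so that vanishing follows from $H^1(K,T)=0$, and the quasi-splitness of arbitrary forms over such a field is then a consequence of the vanishing for the quasi-split adjoint group, not a separate first step established by a classification analysis.
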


The Main Theorem of \cite{ILA} is therefore in some sense a version of Serre's conjecture I for higher-dimensional fields. It is then natural to ask whether one can find good replacements for the $C_2^q$ property that would characterize fields with cohomological dimension $\leq q+2$, while recovering a version for higher-dimensional fields of the so-called Serre's conjecture II:

\begin{conj}[Serre's conjecture II]\label{conj Serre II}
Let $K$ be a field of cohomological dimension $\leq 2$. Then every principal homogeneous space under a semisimple simply connected $K$-group has a rational point.
\end{conj}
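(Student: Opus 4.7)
The plan is to reduce Conjecture~\ref{conj Serre II} to absolutely simple simply connected groups by standard structure theory, and then attack each Dynkin type with the tools best suited to it.

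First, any semisimple simply connected $K$-group $G$ decomposes as a direct product of simple simply connected factors, each of which is of the form $R_{L/K}G'$ for some absolutely simple simply connected $G'$ over a finite separable extension $L/K$. Since $\cd(L)\leq\cd(K)\leq 2$ and Shapiro's lemma gives $H^1(K,R_{L/K}G')\cong H^1(L,G')$, it suffices to prove the vanishing of $H^1(L,G')$ for every absolutely simple simply connected $G'$ over every field $L$ of cohomological dimension at most $2$.

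Next, the classification splits the problem into nine Dynkin types. For the classical types $A_n,B_n,C_n,D_n$ I would follow Bayer-Fluckiger and Parimala: the relevant torsors are classified by hermitian or quadratic form data on central simple algebras with involution, and are controlled by cohomological invariants valued in $H^i(K,\mu_2^{\otimes j})$ with $i\leq 3$; together with the norm residue theorem (Voevodsky--Rost) and the assumption $\cd(K)\leq 2$, this forces triviality of $H^1$. For $G_2$ and $F_4$ one invokes the classification of octonion and Albert algebras by cohomological invariants of degree at most $3$. For $E_6$, $E_7$ and the trialitarian form of $D_4$, one relies on the Rost invariant $R_G\colon H^1(K,G)\rightarrow H^3(K,\bb{Q}/\bb{Z}(2))$, its known injectivity in the appropriate cases, and the vanishing of its target under $\cd(K)\leq 2$.

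The main obstacle, and the reason the conjecture is still open, is the case of groups of type $E_8$: here the Rost invariant is defined but \emph{not} known to be injective, and no full replacement system of cohomological invariants is presently available, so the general type-by-type strategy breaks down exactly at this point. A complementary simplification enabled by the transfer principles developed in this paper would be to reduce the positive-characteristic case to characteristic $0$: given $K$ with $\mathrm{char}(K)>0$ and $\cd(K)\leq 2$, one uses the totally ramified construction announced in the abstract to produce a complete discretely valued characteristic $0$ lift whose residue-field behaviour dominates $K$, thereby concentrating the remaining (still essentially $E_8$-shaped) difficulty in the characteristic $0$ setting.
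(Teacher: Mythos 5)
The statement you are addressing is Conjecture \ref{conj Serre II}, and the paper does not prove it: it is quoted precisely as an open conjecture (the text immediately after it says it ``remains open''), and the paper only ever uses it as a \emph{hypothesis} (Theorems \ref{thm conditionnel} and \ref{thm Serre II clasico}) or invokes the known special cases. So there is no proof in the paper to compare against, and your write-up cannot serve as one either: you concede yourself that the type-by-type strategy ``breaks down'' at $E_8$, which means the argument does not establish the statement. A plan that ends by identifying the unsolved case is a survey of the state of the art, not a proof.

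Beyond that structural point, several intermediate claims are stronger than what is actually known, which is worth flagging because it is exactly where the difficulty lies. For types $E_6$ and $E_7$ the injectivity (triviality of the kernel) of the Rost invariant for \emph{general} simply connected groups is not known; only the quasi-split cases are settled, by Chernousov \cite{ChernousovSerreII} and Gille \cite{GilleSerreIIqd}, and this is precisely why the paper's unconditional results (Corollary \ref{cor incond} and the final corollary) exclude factors of types $E_6$ and $E_7$. Similarly, the Bayer--Fluckiger--Parimala theorem \cite{BP} is proved over perfect fields; extending the classical types, $G_2$ and $F_4$ to imperfect fields is a genuine additional step, handled either by Berhuy--Frings--Tignol \cite{BFT} or, as in this paper, by the characteristic-transfer principle of Theorem \ref{thm Serre II clasico} (note that the paper's own trialitarian $D_4$ statement in characteristic $0$ only produces a zero-cycle of degree $1$, not a rational point). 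Finally, your last paragraph inverts the direction of the paper's transfer: Theorem \ref{thm car p vers car 0} takes a complete discretely valued field of characteristic $0$ whose \emph{residue} field is the given positive-characteristic $K$ and produces a totally ramified extension of cohomological dimension $\mathrm{cd}(K)$, so that the known or assumed characteristic-$0$ case descends to $K$ via the Grothendieck--Serre conjecture; it does not ``dominate'' $K$ by a lift in the sense you describe. If you want a correct unconditional statement in the spirit of your proposal, the right target is the paper's final corollary: $H^1(K,G)=1$ for $\cd(K)\leq 2$ and $G$ without factors of types $E_6$, $E_7$, $E_8$ or trialitarian $D_4$.
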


Contrary to Serre's conjecture I, this one remains open, although a lot has been done in particular cases, whether specifying the type of the group or the field of definition (cf.~the lecture notes \cite{GilleLNM} for a survey on the topic). For instance, Bayer-Fluckiger and Parimala \cite{BP} proved it for classical groups (types $A,B,C$ and $D$ with no triality) and groups of type $F_4$ and $G_2$ over perfect fields, while the same result was obtained for imperfect fields by Berhuy, Frings and Tignol \cite{BFT}. Quasi-split groups of exceptional types other than $E_8$ have been treated by Garibaldi \cite{garibaldi}, Knus, Merkurjev, Rost and Tignol \cite{Invol} (cf.~also \cite{garibaldinote}), Chernousov \cite{ChernousovSerreII} and Gille \cite{GilleSerreIIqd} (cf.~also \cite[Thm.~6.0.1]{GilleLNM}). On the other hand, classical work by Kneser \cite{KneserH1LocalI, KneserH1LocalII}, Bruhat and Tits \cite{BTIII}, Harder \cite{HarderHasseCar0I, HarderHasseCar0II}, and Chernousov \cite{ChernousovE8} yields the conjecture for $p$-adic fields and totally imaginary number fields, while the work of Colliot-Th\'el\`ene, Gille and Parimala \cite{ColliotGilleParimala}, and of de Jong, He and Starr \cite{dJHS} implies the conjecture for diverse fields of geometric nature, including function fields of complex algebraic surfaces.\\

In analogy with our generalization of Serre's conjecture I in \cite{ILA}, we are led to the following conjecture:

\begin{conj}[Higher Serre's conjecture II]\label{conj higher Serre II}
Let $K$ be a field of cohomological dimension $\leq q+2$. Then for every principal homogeneous space $Z$ under a semisimple simply connected $K$-group we have $N_q(Z/K)=\K_q(K)$.
\end{conj}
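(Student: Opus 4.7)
The plan is to deduce the higher Serre's conjecture II, conditionally on the classical Serre's conjecture II (Conjecture \ref{conj Serre II}), from the norm transfer principle announced in the abstract. Fix $K$ with $\cd(K) \leq q+2$ and let $Z$ be a principal homogeneous space under a semisimple simply connected $K$-group $G$. Since $N_q(Z/K)$ is a subgroup of $\K_q(K)$, which is generated by symbols, it suffices to show that every symbol $\alpha = \{a_1, \ldots, a_q\} \in \K_q(K)$ lies in $N_q(Z/K)$.

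The heart of the argument is an iterated application of the norm transfer principle. I would construct an algebraic extension $L/K$ such that $\cd(L) \leq 2$ and such that $\alpha$ lies in the image of the norm $N_{L/K}: \K_q(L) \to \K_q(K)$. I expect such an extension to arise from $q$ successive applications of the transfer principle, each step dropping the cohomological dimension by one while absorbing one more entry of the symbol $\alpha$ into the norm. Then, under Serre's conjecture II for $L$, the principal homogeneous space $Z_L$ acquires an $L$-rational point, so $N_{L/K}$ factors through $N_q(Z/K)$ via the definition of this subgroup together with the projection formula. Hence $\alpha = N_{L/K}(\beta) \in N_q(Z/K)$ for the class $\beta \in \K_q(L)$ provided by the transfer principle, as desired.

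The main obstacle is twofold. First, since Serre's conjecture II is still open in general, the resulting theorem will inevitably be conditional on Conjecture \ref{conj Serre II}; nonetheless, in the cases where Serre II is known (for instance, totally imaginary number fields, $p$-adic fields, or function fields of complex surfaces, via the work cited above) one obtains unconditional consequences for those fields whose entire transfer tower lands in the known range. Second, and more technically, the transfer principle must be flexible enough to produce a single extension $L/K$ that simultaneously achieves the drop in cohomological dimension \emph{and} realizes the prescribed symbol $\alpha$ as a norm; calibrating these two requirements along the iteration, while keeping $G$ well-behaved under base change to a potentially wild algebraic extension, is where the delicate work lies. One should also take care that the projection formula for $N_q$ applies verbatim to homogeneous spaces, so that rationality of $Z$ over $L$ genuinely translates into membership in $N_q(Z/K)$.
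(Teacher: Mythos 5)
Your overall strategy is the same as the paper's: the statement is a conjecture, and what can actually be proved is the conditional result (Theorem \ref{thm conditionnel}), obtained by iterating the dimension-dropping transfer principle (Theorem \ref{th countable}) inside the norm transfer principle (Theorem \ref{th transfer principle car 0}) and invoking Serre's conjecture II at cohomological dimension $\leq 2$. However, your intermediate claim --- a single algebraic extension $L/K$ with $\cd(L)\leq 2$ from which the whole symbol $\alpha$ is a norm --- is not what the iteration delivers, and as stated it has a genuine gap. Theorem \ref{th countable} makes one element of $K^\times$ a universal norm while dropping the dimension by one; if you iterate to build a tower $K\subset K_1\subset\cdots\subset K_q$, the universal-norm property of the entry absorbed at stage two is relative to finite subextensions of $K_2/K_1$, not to the finite subextension $L_1\subset K_1$ over which you will eventually need to take norms, so the needed statement ``$a_{q-1}$ is a norm from $L_1L_2/L_1$'' does not follow formally. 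The paper's bookkeeping avoids this: it inducts on $n-m$, absorbs only the last entry, invokes the full norm-group statement $N_{r+q-1}(Z_{K_{a_{r+q}}}/K_{a_{r+q}})=\K_{r+q-1}(K_{a_{r+q}})$ over the intermediate field, and only afterwards chooses the finite subextension $K'$ (where the absorbed entry is a norm, by universality) large enough to contain the fields where the truncated symbol decomposes, before pushing down with the projection formula. You also omit the reductions needed even to apply Theorem \ref{th countable}: to countable fields (Proposition \ref{prop unc}) and to $\ell$-special fields (Lemma \ref{lem p Sylow}).

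The larger missing piece is positive characteristic. Theorem \ref{th countable}, hence your entire tower, is only available over characteristic $0$ fields, and the conditional hypothesis is Serre's conjecture II for countable characteristic $0$ fields; your argument says nothing about a field $K$ of characteristic $p>0$ with $\cd(K)\leq q+2$, which is precisely where most of the work in the paper's proof of Theorem \ref{thm conditionnel} lies. There one reduces to isotypical groups via Lemma \ref{lem produit}, lifts $K$ to a complete discrete valuation field $\tilde K$ of characteristic $0$ (which has cohomological dimension $q+3$), lifts $G$ and $Z$ over the valuation ring using smoothness of the automorphism group scheme, applies the already-proved characteristic $0$ case in degree $q+1$ to the symbol $\{p,\tilde u_1,\dots,\tilde u_q\}$, and descends using compatibility of norms with residue maps together with the Grothendieck--Serre conjecture \cite{Nisnevich}. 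Without an argument of this kind (or the alternative route via Theorem \ref{thm car p vers car 0}), your proposal only treats characteristic $0$, so it does not yield the full conditional statement, let alone the unconditional cases recorded in Corollary \ref{cor incond}.
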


Note that a converse to this statement will be proved to hold for perfect fields later in the article (Proposition \ref{1An}).\\

As before, the case where $q=0$ corresponds to a weakening of the original conjecture, where rational points have been replaced by zero-cycles of degree $1$. However, the upshot of the higher version of Serre's conjecture II is a gain in flexibility. Indeed, it allows to work with fields of \emph{arbitrary} cohomological dimension, and hence it allows to argue by focusing on the \emph{fields} involved instead of \emph{groups}. This is in full contrast with the usual approaches to this conjecture and with the methods used in \cite{ILA}.

For instance, given a field $K$ of positive characteristic and cohomological dimension $\leq 2$, one can always construct a complete discrete valuation field $\tilde K$ of characteristic $0$ with residue field $K$. One would then like to be able to deduce Serre's conjecture II for $K$ (whether with rational points or zero-cycles of degree $1$) from a statement for $\tilde K$. But this is a field of cohomological dimension $\leq 3$, so one cannot apply the classical conjecture to $\tilde K$. The higher version of the conjecture provides a method to avoid this difficulty.

\subsection*{Transfer principles}
In order to exploit the above-mentioned flexibility and to prove some instances of the Higher Serre's conjecture II, we need to construct, given a field of fixed cohomological dimension, some associated fields with prescribed cohomological dimension and suitable additional properties. One of the main objectives of this article is to present the following ``transfer principles'', which go in this direction, but that are of independent interest.

\begin{MT}[From positive to zero characteristic, Theorem \ref{thm car p vers car 0}]~\label{MTB}\\
Let $\tilde K$ be a complete discrete valuation field of characteristic $0$ with countable residue field $K$ of cohomological dimension $\delta$. Then there exists a totally ramified extension $\tilde K_\dagger/\tilde K$ with cohomological dimension $\delta$.
\end{MT}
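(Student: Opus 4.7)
The plan is to construct $\tilde K_\dagger$ as a directed union of finite totally ramified extensions of $\tilde K$, chosen so that the resulting henselian valued field has residue field $K$ but a value group large enough to force the cohomological dimension down to $\delta$. A natural first approximation is $\tilde K(\pi^{1/n} : n\geq 1)$ for a fixed uniformizer $\pi$: the polynomials $X^n - \pi$ are Eisenstein, so each finite stage is totally ramified, the residue field remains $K$, and the limit value group is $\Q$. Being algebraic over the complete field $\tilde K$, this extension is automatically henselian.

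To bound the cohomological dimension I would exploit the standard short exact sequence
\begin{equation*}
1 \to I \to G_{\tilde K_\dagger} \to G_K \to 1
\end{equation*}
together with the further filtration $1 \to P \to I \to I/P \to 1$, where $P$ is the wild pro-$p$ inertia ($p$ the residual characteristic) and $I/P$ is the tame inertia. The tame quotient $I/P$ is controlled by the part of the value group prime to $p$, so since the value group is divisible, $I/P$ is trivial. Hence for every $\ell \neq p$ one obtains $\cd_\ell(\tilde K_\dagger) = \cd_\ell(K) \leq \delta$, which already settles the equal characteristic zero case ($p=0$) entirely.

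The main obstacle is the mixed characteristic case where $p = \mathrm{char}(K) > 0$: the wild inertia $P$ is a priori nontrivial and one must still ensure $\cd_p(\tilde K_\dagger) \leq \delta$. Here the countability of $K$ is essential, because it implies that the set of finite extensions of (each intermediate stage of) $\tilde K_\dagger$ is countable, hence so is the collection of potentially obstructing cohomology classes in degree $\delta+1$ with $p$-primary coefficients. I would therefore refine the naive construction into a transfinite iterative process: enumerate such classes and, at each step, enlarge the current field by a further finite totally ramified extension that kills the class at hand while preserving divisibility of the value group. Totally ramified building blocks available in mixed characteristic --- such as $p$-power radicals of $\pi$ and $p$-power roots of unity --- should be rich enough to carry this out, and the main technical point will be verifying that each prescribed class can indeed be killed by a \emph{totally ramified} extension, so that at the limit one obtains $\cd_p(\tilde K_\dagger) \leq \delta$. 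The reverse inequality $\cd(\tilde K_\dagger) \geq \delta$ is automatic, since $G_K$ is a continuous quotient of $G_{\tilde K_\dagger}$.
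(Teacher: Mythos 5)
Your outline reproduces the easy half of the argument and correctly isolates the hard half, but it does not prove it. The prime-to-$p$ part (adjoining roots of a uniformizer to make the value group prime-to-$p$ divisible, so that tame inertia disappears and $\cd_\ell\leq\delta$ for $\ell\neq p$) is fine and is essentially the paper's field $\tilde K_{(p')}$. The genuine gap is the step you defer with ``the main technical point will be verifying that each prescribed class can indeed be killed by a totally ramified extension'': that statement \emph{is} the core of the proof (Proposition \ref{proposition cle}), and it is not a routine verification. Moreover, the building blocks you propose --- $p$-power roots of $\pi$ and $p$-power roots of unity --- are not rich enough: in the paper one must adjoin $p$-th roots of principal units $1+\pi^{i}\tilde d$. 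Concretely, using Kato's explicit description of the graded pieces of $\K_{\delta+1}/p$ of a complete discretely valued field (the isomorphisms \eqref{eq Kato i}--\eqref{eq Kato iv}) together with $\kk_{\delta+1}(L)=H^{\delta+1}_p(L)=0$, an arbitrary class over a finite unramified Galois extension $\tilde L/\tilde K$ is reduced, after one tame step $\tilde K(\sqrt[p]{\pi})$ and the symbol manipulations of Lemma \ref{lem calculitos}, to a single symbol $\{1+\pi^{\ell_0}\tilde d_0,\tilde\alpha_1,\dots,\tilde\alpha_\delta\}$ built from a $p$-basis of the residue field; killing it requires a Kummer extension generated by $\sqrt[p]{1+\pi^{\ell_0}\sigma(\tilde d_1)}$ for all $\sigma\in\gal(\tilde L/\tilde K)$, where $\tilde d_1=\tilde d_0+\tilde\lambda_0^p$ and $\lambda_0$ is chosen (using that the residue field is infinite) outside a finite union of proper affine subspaces so that the extension has the full Galois group $(\Z/p)^{[\tilde L:\tilde K]}$ and is totally ramified (Newton polygon, Lemma \ref{lem ram}). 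None of this is visible from roots of $\pi$ or roots of unity alone.

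A second point your sketch does not address is descent to the base: the obstructing class lives over a finite (tamely ramified Galois) extension $\tilde M$ of the current field, and what you must produce is an extension that is totally ramified over $\tilde K$ itself, not merely over $\tilde M$. In the paper this is done by showing the Kummer extension $\tilde L'/\tilde K$ is Galois, identifying $\gal(\tilde L'/\tilde L)\cong\Z/p[\gal(\tilde L/\tilde K)]$, and splitting the extension of Galois groups via Shapiro's lemma ($H^2(H,\Z/p[H])=0$), so that the fixed field $\tilde K'=\tilde L'^{s(H)}$ is a totally ramified extension of $\tilde K$ of $p$-primary degree killing the class; one also needs the bookkeeping of Lemma \ref{lem coho dim} and the tower of Figure \ref{fig field tower} to guarantee that at the limit it suffices to have killed classes over tamely ramified Galois extensions of the stages. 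Finally, a minor but real error: your justification of the lower bound ($G_K$ being a continuous quotient of $G_{\tilde K_\dagger}$) is not a valid principle, since cohomological dimension can increase when passing to quotients; for $\ell\neq p$ the correct argument is that, once tame inertia is trivial and the remaining inertia is pro-$p$, Hochschild--Serre gives $H^n(\tilde K_\dagger,M)\cong H^n(K,M)$ for $\ell$-torsion modules, whence $\cd_\ell(\tilde K_\dagger)=\cd_\ell(K)$.
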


\noindent Recall that a field is called $\ell$-special for some prime number $\ell$ if its absolute Galois group is a pro-$\ell$-group.

\begin{MT}[From higher to lower cohomological dimension, Theorem \ref{th countable}]~\label{MTC}\\
Let $\delta\geq 1$ be an integer, $\ell$ a prime number and $K$ an $\ell$-special countable field of characteristic $0$ and with cohomological dimension $\delta$. For each $x \in K^\times$, there exists an algebraic extension $K_x$ of $K$ that has cohomological dimension $\leq (\delta-1)$ and such that $x \in N_{L/K}(L^\times)$ for every finite subextension $L$ of $K_x/K$.
\end{MT}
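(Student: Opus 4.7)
The plan is to prove the theorem by countable transfinite induction, in the spirit of Merkurjev-Suslin, constructing $K_x$ as an increasing union $K = K_0 \subsetneq K_1 \subsetneq K_2 \subsetneq \cdots$ of finite Kummer extensions. The crucial simplification of the norm condition is that it is enough to maintain, at each finite stage $i$, a single witness $y_i \in K_i^\times$ with $N_{K_i/K}(y_i) = x$: then, by transitivity of the norm, $x = N_{M/K}(N_{K_i/M}(y_i)) \in N_{M/K}(M^\times)$ for every intermediate field $K \subseteq M \subseteq K_i$, and since every finite subextension of $K_x/K$ lies in some $K_i$, the norm property for $K_x$ follows automatically.

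Since $K$ is $\ell$-special and of characteristic $0$, the cyclotomic extension $K(\zeta_\ell)/K$ has degree dividing $\ell-1$ and is therefore trivial, so $\zeta_\ell \in K$; the same then holds for every algebraic extension of $K$. Every such extension $F/K$ is itself $\ell$-special, so $\cd(F) \leq \delta - 1$ is equivalent to $H^\delta(F, \Z/\ell) = 0$, and by Bloch-Kato (Voevodsky-Rost) this group identifies with $\K_\delta(F)/\ell$, which is generated by symbols.

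For the inductive step, given $(K_i, y_i)$ and a symbol $\alpha_i = \{a_1, \dots, a_\delta\} \in \K_\delta(K_i)/\ell$ chosen by a diagonal enumeration over all symbols arising at all stages, I would seek $b_i \in K_i^\times$ such that $K_{i+1} := K_i(\sqrt[\ell]{b_i})$ satisfies:
\begin{itemize}
\item[(A)] $\alpha_i$ vanishes in $K_{i+1}$; writing $b_i = a_1 c$, a sufficient condition is $\{c, a_2, \dots, a_\delta\} = 0$ in $\K_\delta(K_i)/\ell$, via the Kummer description of the restriction kernel.
\item[(B)] $y_i \in N_{K_{i+1}/K_i}(K_{i+1}^\times)$; since $\zeta_\ell \in K_i$, this is equivalent to $\{y_i, b_i\} = 0$ in $\K_2(K_i)/\ell$.
\end{itemize}
Given such $b_i$, I lift $y_i$ to $y_{i+1} \in K_{i+1}^\times$ with $N_{K_{i+1}/K_i}(y_{i+1}) = y_i$, preserving the inductive witness. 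Then $K_x := \bigcup_i K_i$ has $H^\delta(K_x, \Z/\ell) = 0$ by the bookkeeping, hence $\cd(K_x) \leq \delta - 1$, and the norm property holds by the observation of the first paragraph.

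The main obstacle is the simultaneous solvability of (A) and (B) at each step. Condition (A) restricts $c$ to the subgroup $\mathcal{A} := \ker(\{-, a_2, \dots, a_\delta\} : K_i^\times/\ell \to \K_\delta(K_i)/\ell)$, while (B) is the affine equation $\{y_i, c\} = -\{y_i, a_1\}$ in $\K_2(K_i)/\ell$. Their common solvability amounts to showing that $-\{y_i, a_1\}$ lies in the image of the restricted map $c \mapsto \{y_i, c\}$ on $\mathcal{A}$; this is the cohomological heart of the argument, and it is here that the hypothesis $\cd(K_i) \leq \delta$ should enter, for instance by forcing the vanishing of a suitable obstruction class in a degree-$(\delta+1)$ cohomology group. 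One should also exploit the available flexibility: modifying $y_i$ within its coset modulo $\ker(N_{K_i/K})$, varying the decomposition of $\alpha_i$ as a sum of symbols, or replacing a direct step by a finite sequence of auxiliary substeps when the conditions cannot be met in one shot, as is standard in Merkurjev-Suslin-style constructions.
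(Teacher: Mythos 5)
Your skeleton (enumerate symbols, build an increasing chain of finite extensions, carry a single norm witness $y_i$ with $N_{K_i/K}(y_i)=x$, reduce $\cd(K_x)\leq\delta-1$ to the vanishing of symbols in $H^\delta(K_x,\mu_\ell^{\otimes\delta})$ via Proposition \ref{prop Serre} and Bloch--Kato) is exactly the structure of the paper's proof, and your observation that one witness suffices for all intermediate fields by transitivity of the norm is correct. But the step you flag as ``the cohomological heart of the argument'' --- the simultaneous solvability of (A) and (B) --- is precisely the content of the theorem, and your proposal leaves it unproved; as written this is a genuine gap, not a detail. Moreover, your framework is more rigid than what any known argument delivers: you insist that each step be a degree-$\ell$ Kummer extension $K_i(\sqrt[\ell]{b_i})$ killing the chosen symbol while keeping $y_i$ a norm, and there is no reason such a $b_i$ should exist. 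Even in the case $\delta=2$, where $\{y_i,a_1,a_2\}=0$ in $H^3$ lets one realize $y_i$ as a reduced norm of the cyclic algebra $(a_1,a_2)_\ell$, the natural splitting field carrying the witness is $K_i(z)$ for $z$ an element of the algebra with $\mathrm{Nrd}(z)=y_i$, which need not have the prescribed Kummer form; for $\delta\geq 3$ no elementary symbol manipulation is known to produce your $b_i$.

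The paper resolves this step with the machinery of Suslin--Joukhovitski norm varieties \cite{SJ}, and this is where the hypotheses ``$\ell$-special, characteristic $0$'' are actually used. For the current symbol $a_{n_i}$ over $K_i$ one takes a geometrically irreducible projective generic splitting $\nu_{\delta-1}$-variety $X_i$ (Theorems 1.21(1--2) of \cite{SJ}); Theorem A.1 of \cite{SJ} gives an exact sequence
\[
\bigoplus_{p\in X_i\ \mathrm{closed}} K_i(p)^\times \xrightarrow{\ \oplus N_{K_i(p)/K_i}\ } K_i^\times \xrightarrow{\ a_{n_i}\cup\,\cdot\ } \K_{\delta+1}(K_i)/\ell ,
\]
and since $\cd(K_i)=\cd(K)\leq\delta$ the cup product $a_{n_i}\cup x_i$ vanishes, so $x_i$ is a sum of norms from closed points of $X_i$; Theorem 1.21(3) of \cite{SJ} (via the group $\overline{A}_0(X_i,\mathcal{K}_1)$) then upgrades this to a norm from a \emph{single} closed point $p_i$, and $K_{i+1}:=K_i(p_i)$ simultaneously splits the symbol (because $X_i$ is a splitting variety) and carries the new witness $x_{i+1}$. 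In other words, the obstruction-vanishing you hoped would follow from $\cd\leq\delta$ is real, but it is encoded in the $\mathrm{K}$-theoretic exact sequence attached to a norm variety, not in an \emph{ad hoc} pair of symbol equations; without invoking this (or an equivalent input, such as Merkurjev--Suslin in the case $\delta=2$), your induction cannot be carried out.
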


Even if both statements consider a \emph{countable} field $K$, this should not be seen as a constraint. Indeed, classical results from Model Theory---in particular the Löwenheim-Skolem Theorem---allow one to easily reduce from general perfect fields to countable fields. In Section \ref{sec unc}, we give a field-theoretic proof of such a reduction result (Proposition \ref{prop unc}) that also applies to imperfect fields and whose arguments will be useful in the proofs of Theorems \ref{MTB} and \ref{MTC}.\\

Theorem \ref{MTC} yields a ``transfer principle'' for the norm groups $ N_q(Z/K)$, which has some natural consequences on Kato and Kuzumaki's conjectures.

\begin{MT}[Transfer principle for norm groups, Theorem \ref{th transfer principle car 0}]\label{MTD}~\\
Let $m,n,q \geq 0$ be three integers with $n \geq m \geq 1$ and $K$ a field of characteristic 0 and with cohomological dimension $\leq n$. Let $Z$ be a $K$-variety and assume that there exists a countable subfield $K_0$ of $K$ and a form $Z_0$ of $Z$ over $K_0$ satisfying the following condition:
\begin{itemize}
\item[$(\star)$] For every countable subextension $L$ of $K/K_0$, and for every algebraic extension $M/L$ of cohomological dimension $\leq m$, we have $N_{q}(Z_{0,M}/M)=\K_{q}(M)$.
\end{itemize}
Then $N_{n-m+q}(Z/K)=\K_{n-m+q}(K)$.
\end{MT}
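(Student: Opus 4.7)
The plan is to argue by induction on $d := n - m \geq 0$.

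For the countability reduction, any $\alpha \in \K_{n-m+q}(K)$ is defined using finitely many elements of $K^\times$, hence comes from a finitely generated, thus countable, subextension of $K/K_0$. By Proposition \ref{prop unc} I may enlarge it to a countable subextension $L$ of $K/K_0$ with $\cd(L) \leq n$, and hypothesis $(\star)$ restricts to $L$ without change; so I may assume $K$ itself is countable. The base case $d = 0$ then follows at once by applying $(\star)$ with $L = M = K$.

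For the inductive step, fix a symbol $\alpha = \{x_1, \ldots, x_{n-m+q}\}$ and a prime $\ell$; let $K^{(\ell)}$ denote the maximal prime-to-$\ell$ extension of $K$, which is countable, $\ell$-special, of characteristic $0$ and of $\cd \leq n$. Theorem \ref{MTC} applied to $K^{(\ell)}$ and $x_1$ produces an algebraic extension $(K^{(\ell)})_{x_1}$ of $\cd \leq n-1$ in which $x_1$ is a norm from every finite subextension $F/K^{(\ell)}$ of $(K^{(\ell)})_{x_1}/K^{(\ell)}$. Since $(K^{(\ell)})_{x_1}$ is algebraic over the countable field $K$, every countable subextension of $(K^{(\ell)})_{x_1}/K_0$ is algebraic over a countable subextension of $K/K_0$; hence $(\star)$ transfers to $(K^{(\ell)})_{x_1}$, and the inductive hypothesis yields
$$N_{q+d-1}(Z/(K^{(\ell)})_{x_1}) \;=\; \K_{q+d-1}((K^{(\ell)})_{x_1}).$$
In particular $\{x_2, \ldots, x_{n-m+q}\}|_{(K^{(\ell)})_{x_1}}$ is a finite sum of norms of Milnor symbols from finite extensions of $(K^{(\ell)})_{x_1}$ on which $Z$ has a rational point.

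A direct limit argument---observing that a $Z$-point over a finite extension $N/(K^{(\ell)})_{x_1}$ is given by finitely many coordinates and so descends to a finite subextension of $K^{(\ell)}$ inside $N$---allows me to pick a single finite subextension $F$ of $(K^{(\ell)})_{x_1}/K^{(\ell)}$ large enough that $\{x_2, \ldots, x_{n-m+q}\}|_F$ lies in $N_{q+d-1}(Z/F)$ and, simultaneously, $x_1 = N_{F/K^{(\ell)}}(y)$ for some $y \in F^\times$ (by Theorem \ref{MTC}). Two applications of the projection formula then yield
$$\alpha|_{K^{(\ell)}} \;=\; N_{F/K^{(\ell)}}\bigl(\{y\} \cdot \{x_2, \ldots, x_{n-m+q}\}|_F\bigr) \;\in\; N_{q+d}(Z/K^{(\ell)}).$$
Since $K^{(\ell)}$ is the colimit of its finite prime-to-$\ell$ subextensions $M/K$, this identity descends to $\alpha|_{M_\ell} \in N_{q+d}(Z/M_\ell)$ for some such $M_\ell$, and pushing down by $N_{M_\ell/K}$ gives $[M_\ell:K]\,\alpha \in N_{q+d}(Z/K)$. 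As $\ell$ varies, the integers $[M_\ell:K]$ are coprime to the corresponding $\ell$ and so generate $\Z$; hence $\alpha \in N_{q+d}(Z/K)$.

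The main obstacle will be the descent step: finite extensions $N/(K^{(\ell)})_{x_1}$ on which $Z$ has a rational point need not themselves be defined over any finite subextension of $(K^{(\ell)})_{x_1}/K^{(\ell)}$, so one must carefully separate the ``$Z$-point'' data (which descends via its coordinates) from the ``Milnor symbol'' data in order to witness the identity over a single finite $F$. The role of Theorem \ref{MTC} is crucial at two levels: it drops the cohomological dimension by one, which is what makes the induction turn, and it guarantees that $x_1$ is a norm from every such finite $F$, which is what allows the projection formula to put $\alpha$ itself---rather than a nontrivial multiple of it---into $N_{q+d}(Z/K^{(\ell)})$.
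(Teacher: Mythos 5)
Your proof is correct and follows essentially the same route as the paper's: reduce to countable fields (Proposition \ref{prop unc}) and to $\ell$-special fields (the Sylow argument of Lemma \ref{lem p Sylow}), induct on $n-m$ using Theorem \ref{th countable} to drop the cohomological dimension by one while keeping one entry of the symbol a universal norm, descend to a finite subextension by a direct limit argument, and conclude with the projection formula. The differences (stripping the first entry of the symbol rather than the last, and performing the Sylow and countability reductions in a different order) are purely cosmetic.
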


A specific application is the following statement:

\begin{cor}[Corollary \ref{cor KK fgQ}]
Let $K$ be a field with finite transcendence degree over $\mathbb{Q}$. Assume that, for every $i\geq 1$ and for every algebraic extension $L/K$ of cohomological dimension $i$, every hypersurface $Z$ in $\mathbb{P}^n_{L}$ of degree $d$ with $d^i\leq n$ has a zero-cycle of degree 1. Then Kato and Kuzumaki's conjectures hold for $K$.
\end{cor}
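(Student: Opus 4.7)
Since $K$ has finite transcendence degree over $\mathbb{Q}$, it is countable, and every finite extension $L/K$ is again countable, of finite transcendence degree over $\mathbb{Q}$, and inherits the corollary's hypothesis (any algebraic extension of $L$ being a fortiori algebraic over $K$). We must establish Kato and Kuzumaki's biconditional ``$K$ is $C_i^q$ if and only if $\cd(K) \leq i+q$'' for all $i,q\geq 0$. The direction ``$C_i^q \Rightarrow \cd(K) \leq i+q$'' is unconditionally known via the Bloch--Kato conjecture \cite{Riou} (compare the discussion in \cite{KK}); the converse is where the transfer principle enters.

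Fix $i,q \geq 0$ with $\cd(K) \leq i+q$. The case $i=0$ is again Bloch--Kato, so assume $i \geq 1$. Let $L/K$ be a finite extension and $Z \subset \mathbb{P}^n_L$ a hypersurface of degree $d$ with $d^i \leq n$; since $L$ inherits all the hypotheses, we may replace $K$ by $L$ and assume $L=K$. We apply Theorem~\ref{MTD} with $K_0 = K$ and $Z_0 = Z$, specializing the cohomological parameters of that theorem to $i+q$ (the bound on $\cd(K)$), $i$ (the bound on $\cd(M)$ in condition $(\star)$), and $0$ (the Milnor $K$-theoretic index in $(\star)$). Its conclusion then becomes precisely $N_q(Z/K) = \K_q(K)$, proving that $K$ is $C_i^q$.

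All that remains is to verify condition $(\star)$. Since $K$ is countable and $K_0 = K$, the only countable subextension of $K/K_0$ is $K$ itself. Let $M/K$ be an algebraic extension with $\cd(M) \leq i$; we must show that $N_0(Z_M/M) = \K_0(M) = \mathbb{Z}$, which is equivalent to $Z_M$ admitting a zero-cycle of degree $1$. If $\cd(M) = i' \geq 1$, then $Z_M \subset \mathbb{P}^n_M$ is a hypersurface of degree $d$ with $d^{i'} \leq d^i \leq n$, and the corollary's hypothesis applied to the algebraic extension $M/K$ yields the required zero-cycle. If $\cd(M) = 0$, then $M$ is algebraically closed (since $\mathrm{char}\,K = 0$), so $Z_M$ even has an $M$-rational point. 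The genuine difficulty of the argument is thus packaged into Theorem~\ref{MTD} itself; once that transfer principle is available, the corollary reduces to this direct verification.
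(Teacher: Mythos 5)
Your main argument is essentially the paper's own: the authors obtain this corollary as an immediate specialization of Corollary~\ref{cor kk}, i.e.\ of Theorem~\ref{th transfer principle car 0} applied with $K_0=K$ (legitimate since a field of finite transcendence degree over $\mathbb{Q}$ is countable), $m=i$, Milnor index $0$ and $n=i+q$, and your direct application of Theorem~\ref{MTD} with exactly these parameters, together with the routine verification of $(\star)$ (using $d^{i'}\le d^i\le n$ when $\cd(M)=i'$ with $1\le i'\le i$, and algebraic closedness when $\cd(M)=0$), is the same computation the paper leaves implicit.

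One caveat: your opening assertion that the implication ``$C_i^q\Rightarrow \cd(K)\le i+q$'' is unconditionally known ``via Bloch--Kato'' is accurate only for $i=0$. For $i\ge 1$ Bloch--Kato alone does not yield it; one needs splitting-hypersurface arguments in the spirit of the remark following Proposition~\ref{1An} (reduced-norm hypersurfaces for $i\le 2$, Pfister quadrics at $\ell=2$), and the case $i\ge 3$ at odd primes is not a formality. This does not affect the substance of the corollary as the paper uses it, namely the transfer-principle direction ``$\cd(K)\le i+q\Rightarrow K$ is $C_i^q$'' for all $i,q$, on which the paper's own (implicit) proof is also the only thing it provides; but if you intend the conclusion as the full biconditional, you should either restrict the claim to the forward direction or give a genuine argument for the converse rather than citing Bloch--Kato.
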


Theorem \ref{MTD} also allows to immediately recover, \emph{using completely different techniques}, the theorem of \cite{ILA} settling the $C^q_\mathrm{PHS}$ property for perfect fields of cohomological dimension $\leq q+1$ (Corollary \ref{cor Serre I}). But its most beautiful and powerful applications concern Serre's conjecture II and its higher version \ref{conj higher Serre II}.

\subsection*{Applications to Serre's conjecture II and its higher version}

The main application of Theorem \ref{MTD} is the following conditional result concerning Conjecture \ref{conj higher Serre II}:

\begin{MT}[Theorem \ref{thm conditionnel}]\label{MTE}
Assume that the classical Serre's conjecture II holds for countable fields of characteristic $0$. Then for every field $K$ of cohomological dimension $\leq q+2$ and for every principal homogeneous space $Z$ under a semisimple simply connected $K$-group we have $N_q(Z/K)=\K_q(K)$.
\end{MT}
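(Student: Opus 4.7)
My plan is to split on the characteristic of $K$: in characteristic $0$ I apply Theorem \ref{MTD} directly, and in positive characteristic I lift to characteristic zero and specialise via the residue map in Milnor $\mathrm{K}$-theory.

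\textbf{Characteristic $0$.} I apply Theorem \ref{MTD} with $n := q+2$, $m := 2$, and the ``$q$'' of MTD replaced by $0$, so that the conclusion reads $N_q(Z/K) = \K_q(K)$. Since $Z$ and its structural group $G$ are of finite type, I choose a countable subfield $K_0 \subseteq K$ over which they both descend, yielding a principal homogeneous space $Z_0$ under a semisimple simply connected $K_0$-group $G_0$. To verify $(\star)$, let $L$ be a countable subextension of $K/K_0$ and $M/L$ an algebraic extension with $\cd M \leq 2$; then $M$ is countable (algebraic over the countable field $L$) and of characteristic $0$, while $Z_{0,M}$ is a PHS under the semisimple simply connected $M$-group $G_{0,M}$. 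The hypothesised classical Serre's conjecture II provides an $M$-rational point, so $N_0(Z_{0,M}/M) = \Z = \K_0(M)$, and Theorem \ref{MTD} yields the conclusion.

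\textbf{Positive characteristic.} Suppose $\mathrm{char}(K) = p > 0$. Via Cohen's structure theorem I choose a complete discrete valuation field $\tilde K$ of characteristic $0$ with residue field $K$; then $\cd(\tilde K) = \cd(K) + 1 \leq (q+1) + 2$. Using the smoothness of $G$ and the henselian equivalence between finite \'etale covers of $\spec K$ and of $\spec\mathcal{O}_{\tilde K}$, I lift $G$ to a semisimple simply connected group $\tilde G$ over $\tilde K$ defined over $\mathcal{O}_{\tilde K}$ and lift $Z$ to a PHS $\tilde Z$ under $\tilde G$, extending to a smooth affine $\mathcal{O}_{\tilde K}$-torsor whose special fibre is $Z$. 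Applying the characteristic $0$ case above to $\tilde Z$ over $\tilde K$ with the integer $q$ replaced by $q+1$ then gives $N_{q+1}(\tilde Z / \tilde K) = \K_{q+1}(\tilde K)$.

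To descend this equality to $K$, I use the Bass--Tate residue $\partial\colon \K_{q+1}(\tilde K) \to \K_q(K)$, which is surjective. Given $\alpha \in \K_q(K)$, I pick a lift $\tilde \alpha \in \K_{q+1}(\tilde K)$, write $\tilde \alpha = \sum_i N_{\tilde L_i / \tilde K}(\tilde \beta_i)$ with $\tilde Z(\tilde L_i) \neq \emptyset$, and apply $\partial$; Kato's compatibility of norms with residues then yields a decomposition $\alpha = \sum_j N_{L_j/K}(\gamma_j)$ over the residue fields $L_j$ of valuations on the $\tilde L_i$ extending that of $\tilde K$. The main obstacle is to ensure $Z(L_j) \neq \emptyset$ for each contributing term: the plan is to use smoothness of the integer model of $\tilde Z$ together with Hensel's lemma---after enlarging $\tilde L_i$ by a suitable unramified extension if necessary---to lift a $\tilde L_i$-point of $\tilde Z$ to an $\mathcal{O}_{\tilde L_i}$-point whose reduction realises the required $L_j$-point. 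Once this specialisation machinery is set up for PHSs under smooth groups over complete DVRs, $\alpha \in N_q(Z/K)$ follows, completing the argument.
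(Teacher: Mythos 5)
Your overall strategy is the same as the paper's: in characteristic $0$ you apply Theorem \ref{MTD} with $n=q+2$, $m=2$ exactly as the paper does (and your verification of condition $(\star)$ is correct); in characteristic $p$ you lift to a complete discrete valuation field $\tilde K$ of characteristic $0$ with residue field $K$, lift $G$ and $Z$ over $\mathcal{O}_{\tilde K}$, apply the characteristic $0$ case in degree $q+1$, and descend using the residue map and the compatibility of norms with residues. This matches the paper's proof of Theorem \ref{thm conditionnel}, and the $\mathrm{K}$-theoretic bookkeeping (surjectivity of $\partial$, $\partial\circ N_{\tilde L_i/\tilde K}=N_{L_j/K}\circ\partial$ for complete fields) is fine. (A minor point: the lifting of $G$ and $Z$ over $\mathcal{O}_{\tilde K}$ is not a matter of ``finite \'etale covers''; it rests on the smoothness of the automorphism group scheme and of $\mathcal{G}$ together with Grothendieck's result that $H^1(\mathcal{O}_{\tilde K},\mathcal{G})\to H^1(K,G)$ is bijective, as in the paper.)

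However, the step you yourself single out as ``the main obstacle'' is a genuine gap, and your proposed fix does not work. Hensel's lemma (smoothness of the integral model) lifts points of the \emph{special} fibre to $\mathcal{O}_{\tilde L_i}$-points; it does not produce an $\mathcal{O}_{\tilde L_i}$-point from an $\tilde L_i$-point of the generic fibre. Since $\tilde Z$ is affine, a rational point of the generic fibre has no reason to extend to an integral point, so you cannot specialise it to an $L_j$-point of $Z$ this way. Enlarging $\tilde L_i$ by an unramified extension does not help either: that changes the residue field $L_j$, and the norm decomposition of $\alpha$ you obtained involves the original $L_j$'s, so a point of $Z$ over a larger field is useless for showing $\alpha\in N_q(Z/K)$. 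The missing ingredient is precisely what the paper invokes: the Grothendieck--Serre conjecture over a discrete valuation ring (Nisnevich's theorem), which gives the injectivity of $H^1(\mathcal{O}_{\tilde L_i},\mathcal{G})\to H^1(\tilde L_i,\tilde G)$; since $[\mathcal{Z}_{\mathcal{O}_{\tilde L_i}}]$ becomes trivial over $\tilde L_i$, it is trivial over $\mathcal{O}_{\tilde L_i}$, so $\mathcal{Z}(\mathcal{O}_{\tilde L_i})\neq\emptyset$ and reduction modulo the maximal ideal yields $Z(L_j)\neq\emptyset$. With this substitution (a substantive cohomological purity/extension statement for torsors under reductive group schemes, not Hensel's lemma) your argument closes and coincides with the paper's.
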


This result describes the behaviour of principal homogeneous spaces under semisimple simply connected groups over fields with bounded cohomological dimension. It turns out that it also holds when one restricts it (together with the assumption on Serre's conjecture II) to semisimple simply connected isotypical groups of a given type. Hence, using the currently known cases of Conjecture \ref{conj Serre II}, we get the following \emph{unconditional} corollary:

\begin{cor}[Corollary \ref{cor incond}]
    Let $K$ be a field of cohomological dimension $\leq q+2$. Then for every principal homogeneous space $Z$ under a semisimple simply connected $K$-group without factors of types $E_6$ or $E_7$, we have $N_q(Z/K)=\K_q(K)$.
\end{cor}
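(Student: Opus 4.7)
The plan is to combine an isotypical refinement of Theorem~\ref{MTE} with the currently known cases of the classical Serre's conjecture~II.

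First, I would establish the following isotypical version of Theorem~\ref{MTE}: for a fixed Dynkin type $T$, if the classical Serre's conjecture~II holds for torsors under semisimple simply connected isotypical groups of type $T$ over every countable characteristic~$0$ field of cohomological dimension $\leq 2$, then for every field $K$ of cohomological dimension $\leq q+2$ and every torsor $Z$ under a semisimple simply connected isotypical $K$-group of type $T$, one has $N_q(Z/K)=\K_q(K)$. The argument should mirror that of Theorem~\ref{MTE}, invoking Theorem~\ref{MTD} with $(m,n)=(2,q+2)$ for a suitable form $Z_0/K_0$ of $Z$ over a countable subfield $K_0$. The crucial point is that the Dynkin type of a semisimple group is preserved under arbitrary base change, so $Z_{0,M}$ remains a torsor under an isotypical group of type $T$ for every algebraic extension $M$ appearing in hypothesis~$(\star)$; the assumption on Serre's conjecture~II for type $T$ then gives $Z_{0,M}(M)\neq\emptyset$, whence $N_q(Z_{0,M}/M)=\K_q(M)$.

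Next, I would reduce the general statement to the isotypical case. A semisimple simply connected $K$-group $G$ without factors of types $E_6$ or $E_7$ decomposes as a product $G=G_1\times\cdots\times G_r$ of isotypical factors of types $T_1,\dots,T_r\notin\{E_6,E_7\}$, and correspondingly $Z=Z_1\times\cdots\times Z_r$. Given $\alpha\in\K_q(K)$, one uses the isotypical result over $K$ to express $\alpha=\sum_j N_{L_j/K}(\alpha_j)$ with $Z_1(L_j)\neq\emptyset$ and $L_j/K$ finite (hence of cohomological dimension $\leq q+2$), and then applies it inductively over each $L_j$ to the torsor $(Z_2\times\cdots\times Z_r)_{L_j}$, rewriting each $\alpha_j$ as a sum of norms from finite extensions over which $Z_2,\dots,Z_r$ all acquire rational points. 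Since rational points of $Z_1$ persist under base change, the terminal extensions carry points of $Z$, yielding $\alpha\in N_q(Z/K)$. Combining this with the cited works---Bayer-Fluckiger--Parimala, Berhuy--Frings--Tignol, Chernousov, and Gille---which together settle classical Serre's conjecture~II for all types other than $E_6$ and $E_7$ over arbitrary fields of cohomological dimension $\leq 2$, one concludes the corollary.

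The main obstacle is the verification that hypothesis~$(\star)$ of Theorem~\ref{MTD} is genuinely satisfied in the isotypical refinement: it demands the norm equality $N_q(Z_{0,M}/M)=\K_q(M)$ for \emph{all} algebraic extensions $M$ of cohomological dimension $\leq 2$ of a countable subextension of $K/K_0$, not merely for the base field. Fortunately, for the types $T\notin\{E_6,E_7\}$, the existing proofs of Serre's conjecture~II apply over arbitrary fields of cohomological dimension $\leq 2$, which is exactly what is needed; once the type-preservation argument is secured and these results are invoked uniformly over the relevant extensions, the assembly via the iterated norm argument is purely formal bookkeeping.
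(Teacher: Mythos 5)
Your overall architecture---an isotypical refinement of Theorem \ref{MTE} (which is exactly Theorem \ref{thm conditionnel}) combined with a product/Weil-restriction reduction (Lemma \ref{lem produit}, Proposition \ref{prop prod})---matches the paper's proof. The genuine gap is in your final step: it is \emph{not} true that classical Serre's conjecture II is settled for all types other than $E_6$ and $E_7$ over arbitrary (or even countable characteristic-$0$) fields of cohomological dimension $\leq 2$. Bayer-Fluckiger--Parimala \cite{BP} covers the classical types \emph{with no triality} together with $F_4$ and $G_2$; Chernousov and Gille treat quasi-split exceptional groups \emph{other than} $E_8$; Chernousov's $E_8$ theorem is a Hasse principle over number fields. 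So for trialitarian $D_4$ and for $E_8$ the rational-point statement you invoke to verify hypothesis $(\star)$ is simply not available, and your argument breaks down exactly for these two types.

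What rescues the corollary---and what the paper does---is that hypothesis $(\star)$ only requires the zero-cycle ($C^0_\Lambda$) property over the relevant fields, which is weaker than the existence of rational points. For $E_8$, one reduces via Lemma \ref{lem p Sylow} (with $q=0$) to $\ell$-special fields, where the vanishing of $H^1$ is known (\cite[Thm.~8.4.1]{GilleLNM}). For trialitarian $D_4$, the paper proves a separate proposition: after reducing to a $3$-special field by Lemma \ref{lem p Sylow}, the group becomes an inner form over the cyclic cubic extension splitting the triality, is shown to be split there (the center being $2$-torsion has trivial cohomology over a $3$-special field, and $H^1(L,G_0)=1$ by \cite{BP}), hence admits a maximal torus split by that cubic extension, and is then shown to be quasi-split using \cite[Thm.~2.4.1]{BGL-Steinberg-2.0} and \cite[Cor.~5.5.2.(1)]{GilleLNM}, after which \cite{BP} applies. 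Without these two supplementary arguments, your proof establishes the corollary only for groups whose factors are of types $A_n$, $B_n$, $C_n$, non-trialitarian $D_n$, $F_4$ and $G_2$.
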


The last main result of this article is an application of the transfer principle given by Theorem \ref{MTB} to the classical Serre's conjecture II:

\begin{MT}[Theorem \ref{thm Serre II clasico}]\label{MTF}
 If Serre's conjecture II holds for countable fields of characteristic 0, then it holds for arbitrary fields.
\end{MT}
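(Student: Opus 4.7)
The plan is to reduce an arbitrary field $K$ of cohomological dimension $\leq 2$ to a countable field of characteristic $0$, combining the Löwenheim--Skolem-style reduction of Proposition \ref{prop unc} with the transfer principle of Theorem \ref{MTB}. Given such a $K$, a semisimple simply connected $K$-group $G$, and a principal homogeneous space $Z$ under $G$, Proposition \ref{prop unc} produces a countable subfield $K_0\subseteq K$ with $\cd K_0\leq 2$ and forms $G_0,Z_0$ of $G,Z$ over $K_0$ such that $Z_0(K_0)\neq\emptyset$ implies $Z(K)\neq\emptyset$. Since $\spec K_0^{\mathrm{perf}}\to\spec K_0$ is a universal homeomorphism, the étale cohomology sets $H^1(K_0,G_0)$ and $H^1(K_0^{\mathrm{perf}},G_0)$ coincide, so we may replace $K_0$ by its (still countable) perfect closure and assume $K_0$ perfect.

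If $\mathrm{char}(K_0)=0$ we conclude immediately from the hypothesis. Otherwise $\mathrm{char}(K_0)=p>0$; we set $\tilde K=\mathrm{Frac}\,W(K_0)$, a complete discrete valuation field of characteristic $0$ with countable residue field $K_0$ of cohomological dimension $\leq 2$, and apply Theorem \ref{MTB} to obtain a totally ramified algebraic extension $\tilde K_\dagger/\tilde K$ with $\cd \tilde K_\dagger\leq 2$ whose residue field is still $K_0$. Writing $\tilde K_\dagger=\bigcup_n \tilde K_n$ as an increasing union of finite (hence complete) totally ramified subextensions with rings of integers $\mathcal O_n$, we lift $G_0$ to its unique reductive model $\mathcal G$ over $W(K_0)$ (SGA~3) and combine Hensel's lemma for smooth group schemes over henselian local rings with the Bruhat--Tits bijectivity for simply connected reductive group schemes over complete discretely valued fields with perfect residue field, to obtain, for every $n$, identifications
\[
H^1(\tilde K_n,\mathcal G_{\tilde K_n})\;\cong\;H^1(\mathcal O_n,\mathcal G_{\mathcal O_n})\;\cong\;H^1(K_0,G_0).
\]
Taking the filtered colimit in $n$ yields $H^1(\tilde K_\dagger,\mathcal G_{\tilde K_\dagger})\cong H^1(K_0,G_0)$, and we let $\tilde Z$ denote the torsor over $\tilde K_\dagger$ matching the class of $Z_0$.

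A further application of Proposition \ref{prop unc} inside $\tilde K_\dagger$ produces a countable subfield $L_\dagger$ of characteristic $0$ and cohomological dimension $\leq 2$ over which $\mathcal G_{\tilde K_\dagger}$ and $\tilde Z$ both descend; by the hypothesis, the descent of $\tilde Z$ has an $L_\dagger$-point, so $\tilde Z(\tilde K_\dagger)\neq\emptyset$. The bijection above then forces $[Z_0]=0$ in $H^1(K_0,G_0)$, whence $Z_0(K_0)\neq\emptyset$ and $Z(K)\neq\emptyset$. The main obstacle in this plan is ensuring that Proposition \ref{prop unc} can be invoked so as to simultaneously preserve the cohomological dimension and descend all the algebraic data at hand, and that the Bruhat--Tits identification passes to the filtered colimit defining $\tilde K_\dagger$ (which is not itself complete in general); both steps crucially rely on $K_0$ being perfect, which is why the preliminary reduction to a perfect countable subfield is essential.
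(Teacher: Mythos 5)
Your overall architecture is the same as the paper's (reduce to a countable subfield of definition via Proposition \ref{prop unc}, lift the group and the torsor to a complete discrete valuation ring of mixed characteristic with residue field $K_0$, pass to the totally ramified extension of Theorem \ref{thm car p vers car 0}, apply the characteristic-$0$ case there, and specialize back), but the pivotal identification you invoke is not a theorem and is in fact false. There is no ``Bruhat--Tits bijectivity'' asserting $H^1(\mathcal O_n,\mathcal G_{\mathcal O_n})\cong H^1(\tilde K_n,\mathcal G_{\tilde K_n})$ for simply connected group schemes over a complete discretely valued field with perfect residue field; the results of \cite{BTIII} give vanishing statements when the residue field has cohomological dimension $\leq 1$, which is not our situation. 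The map $H^1(\mathcal O_n,\mathcal G)\to H^1(\tilde K_n,\mathcal G_{\tilde K_n})$ is injective (this is the Grothendieck--Serre conjecture for discrete valuation rings, \cite{Nisnevich}), but it is essentially never surjective here: since $\mathrm{cd}(K_0)=2$, the field $\tilde K_n$ has cohomological dimension $3$ and carries many ramified torsors. Concretely, already at level $n=0$, take a central division algebra $\bar D$ of prime degree over $K_0$, lift it to an Azumaya algebra $\mathcal D$ over $W(K_0)$ and set $\mathcal G=\SL_1(\mathcal D)$; then $H^1(\mathcal O,\mathcal G)\cong H^1(K_0,\SL_1(\bar D))$ consists of classes of units modulo reduced norms, whereas in $H^1(\tilde K,\SL_1(\mathcal D_{\tilde K}))=\tilde K^\times/\nrd(\mathcal D_{\tilde K}^\times)$ the class of the uniformizer $p$ is nontrivial (reduced norms from an unramified division algebra have valuation divisible by its degree) and does not come from $H^1(\mathcal O,\mathcal G)$. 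So the chain of bijections you build, and hence the colimit isomorphism $H^1(\tilde K_\dagger,\mathcal G)\cong H^1(K_0,G_0)$, cannot be established this way.

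The good news is that your argument only ever uses one direction, and that direction is true: define $\tilde Z$ as the generic fibre of a lift $\mathcal Z_0$ of $Z_0$ over the complete discrete valuation ring (such a lift exists by the Hensel-type bijection $H^1(\mathcal O,\mathcal G)\cong H^1(K_0,G_0)$ for smooth affine group schemes, \cite[Thm.~11.7, Rem.~11.8]{Brauer3}); after applying the characteristic-$0$ case over $\tilde K_\dagger$ (via Proposition \ref{prop unc} and the hypothesis), the class of $\tilde Z$ dies over some \emph{finite} totally ramified subextension $\tilde K_1$, whose ring of integers is complete with residue field $K_0$; then Nisnevich's injectivity over $\mathcal O_1$ and specialization give $Z_0(K_0)\neq\emptyset$ --- no passage of any isomorphism to the non-complete colimit $\tilde K_\dagger$ is needed. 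This is exactly the paper's proof of Theorem \ref{thm Serre II clasico}. Note also that with this repair your preliminary perfection step becomes superfluous (and your closing claim that it is ``essential'' is inaccurate): neither the Hensel bijection nor Nisnevich's theorem requires a perfect residue field, and instead of $W(K_0)$ one uses a Cohen ring \cite[IX.2, Prop.~5]{Bourbaki}, which exists with the possibly imperfect residue field $K_0$; this is how the paper (as in the proof of Theorem \ref{thm conditionnel}) handles imperfect fields directly.
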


Again, this result also holds when one restricts to principal homogeneous spaces under semisimple simply connected isotypical groups of a given type. It therefore allows to \emph{unconditionally} recover Serre's conjecture II for groups without factors of types $E_6$, $E_7$, $E_8$ or trialitarian $D_4$ over imperfect fields from the analoguous result over perfect fields, which was settled by Bayer-Fluckiger and Parimala in \cite{BP}. This consequence of Theorem \ref{MTF} was already proved in \cite{BFT} by a completely different method based on a case by case study following the classification of semisimple simply connected groups instead of focusing on fields.

\subsection*{Organization of the article}

In section \ref{sec 2}, we introduce a certain number of preliminaries and notations regarding the cohomological dimension of fields, Milnor $\mathrm{K}$-theory and the norm groups $N_q(Z/K)$. Section \ref{sec 3} is dedicated to the proofs of the transfer principles for Galois cohomology given by Theorems \ref{MTB} and \ref{MTC}. In Section \ref{sec 4}, we deduce the transfer principle for norm groups given by Theorem \ref{MTD}, and we explain how to get the applications to Serre's conjecture II and its higher variants given by Theorems \ref{MTE} and \ref{MTF}.

\subsection*{Acknowledgements}

We thank Skip Garibaldi, Philippe Gille, Olivier Wittenberg and an anonymous referee for their comments and support. The second author's research was partially supported by ANID via FONDECYT Grants 1210010 and 1240001.

\section{Preliminaries and notations}\label{sec 2}

\subsection{The cohomological dimension of a field}
The cohomological dimension  $\mathrm{cd}(K)$ of a perfect field $K$ is the cohomological dimension of its absolute Galois group. In other words, it is the smallest integer $\delta$ (or $\infty$ if such an integer does not exist) such that $H^n(K,M)=0$ for every $n>\delta$ and each finite Galois module $M$. 

It is more complicated to define a good notion for the cohomological dimension of an imperfect field. This was first done by Kato in \cite{Kato}.

\begin{definition}
Let $K$ be any field. 
\begin{itemize}
\item[(i)] Let $\ell$ be a prime number different from the characteristic of $K$. The \emph{$\ell$-cohomological dimension} $\mathrm{cd}_{\ell}(K)$ and the \emph{separable $\ell$-cohomological dimension} $\mathrm{sd}_{\ell}(K)$ of $K$ are both the $\ell$-cohomological dimension of the absolute Galois group of $K$.
\item[(ii)] (Kato, \cite{Kato}; Gille, \cite{gille}). Assume that $K$ has characteristic $p>0$. Let $\Omega^i_K$ be the $i$-th exterior product over $K$ of the absolute differential module $\Omega^1_{K/\mathbb{Z}}$ and let $H^{i+1}_p(K)$ be the cokernel of the morphism $\mathfrak{p}^i_K\,:\, \Omega^i_K \to \Omega^i_K/d(\Omega^{i-1}_K)$ defined by
\[x \frac{dy_1}{y_1} \wedge ... \wedge \frac{dy_i}{y_i} \mapsto (x^p-x) \frac{dy_1}{y_1} \wedge ... \wedge \frac{dy_i}{y_i} \mod d(\Omega^{i-1}_K),\]
for $x\in K$ and $y_1,...,y_i \in K^{\times}$. The \emph{$p$-cohomological dimension} $\mathrm{cd}_{p}(K)$ of $K$ is the smallest integer $\delta$ (or $\infty$ if such an integer does not exist) such that $[K:K^p]\leq p^\delta$ and $H^{\delta+1}_p(L)=0$ for all finite extensions $L$ of $K$. The \emph{separable $p$-cohomological dimension} $\mathrm{sd}_{p}(K)$ of $K$ is the smallest integer $\delta$ (or $\infty$ if such an integer does not exist) such that $H^{\delta+1}_p(L)=0$ for all finite separable extensions $L$ of $K$.
\item[(iii)] The \emph{cohomological dimension} $\mathrm{cd}(K)$ of $K$ is the supremum of all the $\mathrm{cd}_{\ell}(K)$'s when $\ell$ runs through all prime numbers. The \emph{separable cohomological dimension} $\mathrm{sd}(K)$ of $K$ is the supremum of all the $\mathrm{sd}_{\ell}(K)$'s when $\ell$ runs through all prime numbers. 
\end{itemize}
\end{definition}

For further use, we recall the following classical criterion for cohomological dimension, which follows from \cite[I.3.3, Cor.~1, I.4.1, Prop.~21]{SerreCohGal}).

\begin{proposition}\label{prop Serre}
Let $K$ be any field and let $\ell$ be a prime number different from the characteristic of $K$. Then $K$ has $\ell$-cohomological dimension $\leq \delta$ if and only if, for every finite separable extension $L/K$ with $[L:K]$ coprime to $\ell$ and containing a primitive $\ell$-th root of unity, $H^{\delta+1}(L,\mu_\ell^{\otimes(\delta+1)})=0$.
\end{proposition}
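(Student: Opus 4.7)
The plan is to follow the standard strategy outlined in Serre's book: reduce to a pro-$\ell$ absolute Galois group via a Sylow argument, use Serre's criterion that for a pro-$\ell$ group $G_\ell$ one has $\mathrm{cd}(G_\ell) \leq \delta$ iff $H^{\delta+1}(G_\ell, \F_\ell) = 0$, and finally identify $\F_\ell$ with $\mu_\ell^{\otimes(\delta+1)}$ after adjoining a primitive $\ell$-th root of unity.

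For the ``only if'' direction, I would argue that if $\mathrm{cd}_\ell(K) \leq \delta$ and $L/K$ is a finite separable extension, then $\mathrm{Gal}(K^\sep/L)$ is an open subgroup of $\mathrm{Gal}(K^\sep/K)$, so $\mathrm{cd}_\ell(L) \leq \mathrm{cd}_\ell(K) \leq \delta$ and hence $H^{\delta+1}(L, \mu_\ell^{\otimes(\delta+1)}) = 0$ for the specified $L$'s (and in fact for all of them).

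For the ``if'' direction, set $G = \mathrm{Gal}(K^\sep/K)$ and fix an $\ell$-Sylow subgroup $G_\ell \subset G$; let $K_\ell := (K^\sep)^{G_\ell}$. By Corollary~1 of I.3.3 of \cite{SerreCohGal}, $\mathrm{cd}_\ell(G) = \mathrm{cd}_\ell(G_\ell) = \mathrm{cd}(G_\ell)$, and by Proposition~21 of I.4.1 of \cite{SerreCohGal}, since $G_\ell$ is pro-$\ell$, it suffices to show $H^{\delta+1}(G_\ell, \F_\ell) = 0$, i.e.\ $H^{\delta+1}(K_\ell, \F_\ell) = 0$. Because the absolute Galois group of $K_\ell$ is the pro-$\ell$ group $G_\ell$, every finite separable extension of $K_\ell$ has $\ell$-power degree; in particular $K_\ell(\mu_\ell)/K_\ell$, whose degree divides $\ell - 1$, must be trivial, so $\mu_\ell \subset K_\ell$ and $\F_\ell \cong \mu_\ell^{\otimes(\delta+1)}$ as Galois modules over $K_\ell$.

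It remains to check $H^{\delta+1}(K_\ell, \mu_\ell^{\otimes(\delta+1)}) = 0$. Writing $K_\ell$ as the filtered union of its finite subextensions $L_\alpha/K$, continuity of Galois cohomology gives
\[H^{\delta+1}(K_\ell, \mu_\ell^{\otimes(\delta+1)}) = \varinjlim_\alpha H^{\delta+1}(L_\alpha, \mu_\ell^{\otimes(\delta+1)}).\]
Each $L_\alpha$ corresponds to an open subgroup of $G$ containing $G_\ell$, so $[L_\alpha : K]$ is coprime to $\ell$. Replacing $L_\alpha$ by $L_\alpha(\mu_\ell)$ (which still has degree over $K$ coprime to $\ell$ and now contains $\mu_\ell$) is harmless because the system is cofinal, and the hypothesis then yields $H^{\delta+1}(L_\alpha(\mu_\ell), \mu_\ell^{\otimes(\delta+1)}) = 0$. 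The colimit therefore vanishes, completing the proof.

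The main technical point — not a deep obstacle but the step most easily mishandled — is the reduction to the $\ell$-Sylow fixed field together with the cofinality argument: one must simultaneously ensure that $\mu_\ell$ lies in the test fields and that their degrees over $K$ remain prime to $\ell$. Once these are in place, the whole argument collapses to the two cited statements from \cite{SerreCohGal}.
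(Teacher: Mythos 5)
Your proof is correct and follows exactly the route the paper intends: the paper gives no argument beyond citing \cite[I.3.3, Cor.~1]{SerreCohGal} (reduction to an $\ell$-Sylow subgroup) and \cite[I.4.1, Prop.~21]{SerreCohGal} (the pro-$\ell$ criterion via $H^{\delta+1}(G_\ell,\F_\ell)$), which are precisely the two ingredients you combine, together with the standard continuity/cofinality argument and the observation that $\mu_\ell$ lies in the Sylow fixed field. No gaps; this is essentially the same proof.
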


\subsection{Milnor $\mathrm{K}$-theory}\label{sec K-th}
Let $K$ be any field and let $q$ be a non-negative integer. The $q$-th Milnor $\mathrm{K}$-group of $K$ is by definition the group $\K_0(K)=\mathbb{Z}$ if $q =0$ and:
$$\K_q(K):= \underbrace{K^{\times} \otimes_{\mathbb{Z}} ... \otimes_{\mathbb{Z}} K^{\times}}_{q \text{ times}} / \left\langle x_1 \otimes ... \otimes x_q | \exists i,j, i\neq j, x_i+x_j=1 \right\rangle$$
if $q>0$. For $x_1,...,x_q \in K^{\times}$, the symbol $\{x_1,...,x_q\}$ denotes the class of $x_1 \otimes ... \otimes x_q$ in $\K_q(K)$. More generally, for $r$ and $s$ non-negative integers such that $r+s=q$, there is a natural pairing:
$$\K_r(K) \times \K_s(K) \rightarrow \K_q(K)$$
which we will denote $\{\cdot, \cdot\}$.\\

When $L$ is a finite extension of $K$, one can construct a norm homomorphism
\[N_{L/K}: \K_q(L) \rightarrow \K_q(K),\]
satisfying the following properties (cf.~\cite[\S 1.7]{Kat} or \cite[\S 7.3]{GS}):
\begin{itemize}
\item[$\bullet$] For $q=0$, the map $N_{L/K}: \K_0(L) \rightarrow \K_0(K)$ is given by multiplication by $[L:K]$.
\item[$\bullet$] For $q=1$, the map $N_{L/K}: \K_1(L) \rightarrow \K_1(K)$ coincides with the usual norm $L^{\times} \rightarrow K^{\times}$.
\item[$\bullet$] If $r$ and $s$ are non-negative integers such that $r+s=q$, we have $N_{L/K}(\{x,y\})=\{x,N_{L/K}(y)\}$ for $x \in \K_r(K)$ and $y\in \K_s(L)$.
\item[$\bullet$] If $M$ is a finite extension of $L$, we have $N_{M/K} = N_{L/K} \circ N_{M/L}$.
\end{itemize}
Recall also that Milnor $\mathrm{K}$-theory is endowed with residue maps (cf.~\cite[\S 7.1]{GS}). Indeed, when $\tilde K$ is a complete discrete valuation field with ring of integers $\cal O_{\tilde K}$ and residue field $K$, there exists a unique residue morphism:
$$\partial: \K_{q}(\tilde K) \rightarrow \K_{q-1}(K)$$
such that, for each uniformizer $\pi$ and for all units $\tilde u_2,...,\tilde u_{q}\in \cal O_{\tilde K}^{\times}$ whose images in $K$ are denoted $u_2,\ldots,u_{q}$, one has:
$$\partial (\{\pi,\tilde u_2,\ldots,\tilde u_{q}\})=\{ u_2,\ldots,u_{q}\}.$$
The kernel of $\partial$ is the subgroup $U_{q}(\tilde K)$ of $\K_{q}(\tilde K)$ generated by symbols of the form $\{x_1,\ldots,x_{q}\}$ with $x_1,\ldots,x_{q} \in \cal O_{\tilde K}^\times$. We denote by $U_{q}^i(\tilde K)$ the subgroup of $\K_{q}(\tilde K)$ generated by those symbols that lie in $U_q(\tilde K)$ and that are of the form $\{x_1,\ldots,x_q\}$ with $x_1 \in 1+\pi^i\cal O_{\tilde K}$ and $x_2,\ldots,x_q \in \tilde K^{\times}$.

Let us finally recall the description given by Kato of the $\mathrm{K}$-theory modulo $p$ of $\tilde K$ when $K$ is of characteristic $p>0$ and $\tilde K$ contains a primitive $p$-th root of unity $\zeta_p$. For that purpose, we set $\kk_q(\tilde K):=\K_q(\tilde K)/p$ and $u_q^i(\tilde K):=U_q^i(\tilde K)/p$. We fix a uniformizer $\pi\in\tilde K$, and for each $a\in K$, we denote by $\tilde a$ a lift of $a$ to $\cal O_{\tilde K}$. Finally, we set $\epsilon_{\tilde K}:=\frac{v_{\tilde K}(p)p}{p-1}$, where $v_{\tilde K}$ denotes the valuation on $\tilde K$. Then, for every integer $q\geq 0$, we have the following isomorphisms (cf.~\cite[p.~218]{Kato}): 
\begin{align}
\label{eq Kato i}
\rho^{q+1}_0 : \kk_{q+1}(K)\oplus \kk_q(K) &\to \kk_{q+1}(\tilde K)/ u^1_{q+1}(\tilde K), \\
\nonumber\left(\{y_1,y_2,\dots,y_{q+1}\},0\right) &\mapsto \{\tilde y_1,\tilde y_2,\dots,\tilde y_{q+1} \}, \\
\nonumber\left(0,\{y_1,y_2,\dots,y_q\}\right) &\mapsto \{\pi,\tilde y_1,\tilde y_2,\dots,\tilde y_q \}; \\
\intertext{for every $0<i<\epsilon_{\tilde K}$ such that $(i,p)=1$,}
\label{eq Kato ii}
\rho^{q+1}_i: \Omega^{q}_K &\to u^i_{q+1}(\tilde K)/ u^{i+1}_{q+1}(\tilde K), \\
\nonumber x \,\frac{dy_1}{y_1}\wedge \frac{dy_2}{y_2} \wedge \dots \wedge \frac{dy_q}{y_q}  &\mapsto \{1+\pi^i\tilde x,\tilde y_1,\tilde y_2, \dots, \tilde y_q \}; \\
\intertext{for every $0<i<\epsilon_{\tilde K}$ such that $p|i$, }
\label{eq Kato iii}
\rho^{q+1}_i : \Omega^{q}_K/\ker(d) \oplus \Omega^{q-1}_K/\ker(d) &\to  u^i_{q+1}(\tilde K)/ u^{i+1}_{q+1}(\tilde K), \\
\nonumber\left( x \,\frac{dy_1}{y_1}\wedge \frac{dy_2}{y_2} \wedge \dots \wedge \frac{dy_q}{y_q}, 0 \right) &\mapsto  \{1+\pi^i\tilde x,\tilde y_1,\tilde y_2, \dots, \tilde y_q \}, \\
\nonumber\left( 0, x \,\frac{dy_1}{y_1}\wedge \frac{dy_2}{y_2} \wedge \dots \wedge \frac{dy_{q-1}}{y_{q-1}} \right) &\mapsto  \{\pi,1+\pi^i\tilde x,\tilde y_1,\tilde y_2, \dots, \tilde y_{q-1} \}; \\
\intertext{for $i=\epsilon_{\tilde K}$,}
\label{eq Kato iv}
\rho^{q+1}_{i} : H_p^{q+1}(K)\oplus H_p^{q}(K) &\to u^{i}_{q+1}(\tilde K), \\
\nonumber\left( x \,\frac{dy_1}{y_1}\wedge \frac{dy_2}{y_2} \wedge \dots \wedge \frac{dy_q}{y_q}, 0 \right) &\mapsto  \{1+(\zeta_p-1)^p\tilde x,\tilde y_1,\tilde y_2, \dots, \tilde y_q \}, \\
\nonumber\left( 0, x \,\frac{dy_1}{y_1}\wedge \frac{dy_2}{y_2} \wedge \dots \wedge \frac{dy_{q-1}}{y_{q-1}} \right) &\mapsto  \{\pi,1+(\zeta_p-1)^p\tilde x,\tilde y_1,\tilde y_2, \dots, \tilde y_{q-1} \}.
\end{align}

\subsection{The norm groups $N_q(Z/K)$}
We conclude these preliminaries by studying the following norm groups, which were originally defined by Kato and Kuzumaki in \cite{KK}:

\begin{definition}
Let $K$ be a field and let $q$ be a non-negative integer. For each $K$-scheme $Z$ of finite type, we define the \emph{norm group} $N_q(Z/K)$ as the subgroup of $\K_q(K)$ generated by the images of the maps $N_{L/K}: \K_q(L) \rightarrow \K_q(K)$ when $L$ runs through the finite extensions of $K$ such that $Z(L)\neq \emptyset$.
\end{definition}

In this article, we will use the following elementary lemmas about the norm groups.

\begin{lemma}\label{lem produit}
Let $K$ be a field and let $Y$ and $Z$ be two $K$-varieties. Assume that $N_q(Y_{L}/L)=N_q(Z_{L}/L)=\K_q(L)$ for every finite extension $L$ of $K$. Then $N_q(Y\times_K Z/K)=\K_q(K)$.
\end{lemma}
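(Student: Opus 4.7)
The plan is a direct two-step chasing of finite extensions, using the multiplicativity of the Milnor norm.

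Fix $\alpha\in \K_q(K)$. Applying the hypothesis with $L=K$ to $Y$, one can write
\[
\alpha = \sum_{i} N_{L_i/K}(\beta_i),
\]
where each $L_i/K$ is a finite extension such that $Y(L_i)\neq\emptyset$ and $\beta_i\in\K_q(L_i)$. Now for each fixed $i$, apply the hypothesis for $Z$ to the finite extension $L_i/K$: since $N_q(Z_{L_i}/L_i)=\K_q(L_i)$, we may further decompose
\[
\beta_i = \sum_{j} N_{M_{ij}/L_i}(\gamma_{ij}),
\]
where each $M_{ij}/L_i$ is a finite extension such that $Z_{L_i}(M_{ij})\neq\emptyset$, and $\gamma_{ij}\in \K_q(M_{ij})$.

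The key observation is that for each pair $(i,j)$, the field $M_{ij}$ is simultaneously an extension of $L_i$ (hence $Y(M_{ij})\neq\emptyset$ by base change from $L_i$) and satisfies $Z(M_{ij})\neq\emptyset$. Therefore $(Y\times_K Z)(M_{ij})\neq\emptyset$, so $M_{ij}$ is an admissible field in the definition of $N_q(Y\times_K Z/K)$. By functoriality of the norm for the tower $K\subset L_i\subset M_{ij}$, one has $N_{M_{ij}/K}=N_{L_i/K}\circ N_{M_{ij}/L_i}$, hence
\[
\alpha = \sum_i N_{L_i/K}\bigl(\sum_j N_{M_{ij}/L_i}(\gamma_{ij})\bigr) = \sum_{i,j} N_{M_{ij}/K}(\gamma_{ij}) \in N_q(Y\times_K Z/K).
\]

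There is no real obstacle here: the only things being used are that $Y(L_i)\neq\emptyset$ is preserved by further base change, that $Z_{L_i}(M_{ij})\neq\emptyset$ means exactly $Z(M_{ij})\neq\emptyset$, and transitivity of the Milnor norm. The statement is essentially a formal consequence of the definitions, and I would write it in four lines.
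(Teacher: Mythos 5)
Your argument is correct and is essentially identical to the paper's own proof: decompose the element using the hypothesis for $Y$ over $K$, then refine each term using the hypothesis for $Z$ over each $L_i$, and conclude via transitivity of the norm and the fact that both $Y$ and $Z$ have points in every $M_{ij}$. Nothing further is needed.
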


\begin{proof}
Fix $z\in \K_q(K)$. Since $N_q(Y/K)=\K_q(K)$, there exist finite extensions $L_1,\dots,L_r$ of $K$ and elements $z_1\in\K_q(L_1),\dots,z_r\in\K_q(L_r)$ such that:
\[\begin{cases} 
\forall\,i, \; Y(L_i)\neq\emptyset, \\
z = \sum_i N_{L_i/K}(z_i).
\end{cases}\]
Now, for each $i$, we have $N_q(Z_{L_i}/L_i)=\K_q(L_i)$, and hence there exist finite extensions $L_{i,1},\dots,L_{i,r_i}$ of $L_i$ and elements $z_{i,1}\in\K_q(L_{i,1}), \dots z_{i,r_i}\in\K_q(L_{i,r_i})$ such that:
\[\begin{cases} 
\forall\,j, \; Z(L_{i,j})\neq\emptyset, \\
z_i=\sum_j N_{L_{i,j}/L_{i}}(z_{i,j}).
\end{cases} \]
Hence $z=\sum_{i,j} N_{L_{i,j}/K}(z_{i,j})$, and since both $Y$ and $Z$ have points in all the $L_{i,j}$'s, we get $z\in N_q(Y\times_K Z/K)$.
\end{proof}

\begin{lemma} \label{lem p Sylow}
Let $K$ be a field, $q\geq 0$ an integer and $Z$ a $K$-variety. For every prime number $p$, fix an extension $K_p/K$ corresponding to a $p$-Sylow subgroup of $\gal(K^\sep/K)$. Assume that $N_q(Z_{K_p}/K_p)=\K_q(K_p)$ for every prime $p$. Then $N_q(Z/K)=\K_q(K)$.
\end{lemma}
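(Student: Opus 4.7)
The plan is to fix $z\in\K_q(K)$ and to show, for each prime $p$, that there exists a finite subextension $L_p/K$ of $K_p/K$ with $z|_{L_p}\in N_q(Z_{L_p}/L_p)$. Applying the norm $N_{L_p/K}$ to such a relation then gives $[L_p:K]\cdot z\in N_q(Z/K)$ via the standard identity $N_{L/K}\circ\mathrm{Res}_{L/K}=[L:K]\cdot\mathrm{id}$ on Milnor K-theory. Since $L_p\subseteq K_p$, the degree $[L_p:K]$ is coprime to $p$, so the integers $\{[L_p:K]\}_p$ have greatest common divisor $1$; a Bezout combination of these relations then yields $z\in N_q(Z/K)$.

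The core of the argument is the descent. Fix a prime $p$ and write $z|_{K_p}=\sum_i N_{M_i/K_p}(z_i)$ with $M_i/K_p$ finite, $Z(M_i)\neq\emptyset$, and $z_i\in\K_q(M_i)$. Writing each $M_i=K_p(\theta_i)$, the minimal polynomial $f_i$ of $\theta_i$ over $K_p$ has coefficients in some finite subextension $L_0/K$ of $K_p/K$, and remains irreducible over $L_0$ (since any factorization over $L_0$ would be one over $K_p$); thus $N_i:=L_0(\theta_i)$ is a field with $N_i\otimes_{L_0}K_p\cong M_i$. Since $Z$ is of finite type, every $M_i$-point of $Z$ already lies in some $Z(L(\theta_i))$ for a finite subextension $L$ of $K_p/K$ containing $L_0$; and since Milnor K-theory commutes with filtered colimits of fields, each $z_i$ comes from some $w_i\in\K_q(L(\theta_i))$ for $L$ large enough. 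After enlarging $L_0$ finitely many times I may therefore assume $Z(N_i)\neq\emptyset$ and that each $z_i$ is the image of some $w_i\in\K_q(N_i)$. By functoriality of norms under the étale base change $N_i\otimes_{L_0}K_p\cong M_i$, the elements $z|_{L_0}$ and $\sum_i N_{N_i/L_0}(w_i)$ of $\K_q(L_0)$ then have the same image in $\K_q(K_p)$, so one further enlargement to a finite subextension $L_p$ of $K_p/L_0$ makes them equal in $\K_q(L_p)$. The standard Mackey-type decomposition $N_{N_i/L_0}(w_i)|_{L_p}=\sum_j N_{N_{i,j}/L_p}(w_i|_{N_{i,j}})$ (where $N_i\otimes_{L_0}L_p\cong\prod_j N_{i,j}$, and each $N_{i,j}$ contains $N_i$ and hence satisfies $Z(N_{i,j})\neq\emptyset$) then expresses $z|_{L_p}$ as an element of $N_q(Z_{L_p}/L_p)$.

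The main obstacle will be the careful bookkeeping required to successively enlarge $L_0$ so that all the descents---of the minimal polynomials $f_i$, of the conditions $Z(M_i)\neq\emptyset$, of the K-theory classes $z_i$, and finally of the equality itself---hold simultaneously. The remaining ingredients (the Mackey-type formula for Milnor K-theory, the functoriality of norms under base change, and the Bezout step) are standard.
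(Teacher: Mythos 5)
Your overall strategy coincides with the paper's: restrict $z$ to $K_p$, descend the norm relation to a finite subextension $L_p$ of $K_p/K$, apply $N_{L_p/K}$ to obtain $[L_p:K]\cdot z\in N_q(Z/K)$ with $[L_p:K]$ prime to $p$, and conclude by letting $p$ vary. The paper dispatches the descent in one sentence; you make it explicit via spreading out of points, continuity of Milnor $\mathrm{K}$-theory under filtered colimits, and base-change compatibility of the norm maps, which is the right way to justify it.

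There is, however, one step that fails as written: you cannot in general write $M_i=K_p(\theta_i)$. The lemma concerns arbitrary fields, and the group $N_q(Z_{K_p}/K_p)$ is generated by norms from \emph{all} finite extensions of $K_p$, including inseparable ones when $K$ is imperfect; a finite inseparable extension need not be simple (e.g.\ $\mathbb{F}_p(s,t)/\mathbb{F}_p(s^p,t^p)$), so the primitive element theorem is not available. The repair is routine: write $M_i$ as a finite tower of simple extensions of $K_p$ and descend the tower step by step (or descend a finite presentation of $M_i$ as a $K_p$-algebra), using transitivity of the norm maps; irreducibility of each minimal polynomial over the relevant finite subfield persists exactly as in your argument, since that subfield sits inside the corresponding extension of $K_p$. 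Two further points are worth making explicit. First, your final Mackey-type decomposition is legitimate precisely because $L_p/L_0$ is separable (both lie in $K^{\mathrm{sep}}$), so $N_i\otimes_{L_0}L_p$ really is a product of fields and the base-change formula for norms applies without multiplicities; for inseparable base changes the formula acquires length multiplicities, so this separability should be invoked. Second, the "functoriality of norms under base change'' you use is not étaleness of $N_i/L_0$ (which may fail), but the compatibility of Kato's norm with restriction when the relevant tensor product is a field; this is indeed standard, but it is the correct statement to cite. With these adjustments your proof is correct and is essentially the proof the paper leaves implicit.
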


\begin{proof}
Take $z \in \K_q(K)$ and fix a prime number $p$. Since $N_q(Z_{K_p}/K_p)=\K_q(K_p)$, we can find finite extensions $L_1,\ldots, L_r$ of $K_p$ such that
\[\begin{cases}
\forall\,i,\; Z(L_i)\neq \emptyset,\\
z \in \langle N_{L_i/K_p}(\K_q(L_i)) : 1\leq i\leq r\rangle.
\end{cases}\]
We can then also find a finite subextension $K'_p$ of $K_p/K$ together with finite extensions $L'_1,\ldots, L'_r$ of $K'_p$, with $L'_i\subset L_i$, such that $z \in \langle N_{L'_i/K'_p}(\K_q(L'_i)) : 1\leq i\leq r\rangle$ and $Z(L'_i)\neq \emptyset$ for every $1\leq i
\leq r$. In particular:
$$[K'_p:K]\cdot z\in \langle N_{L'_i/K}(\K_q(L'_i)) : 1\leq i\leq r\rangle \subset N_q(Z/K).$$
Since $[K'_p:K]$ is not divisible by $p$ and since the previous argument can be done for every prime number $p$, we deduce that $z\in N_q(Z/K)$.
\end{proof}

\section{Transfer principles for cohomological dimension}\label{sec 3}

In this section, we prove the transfer principles for Galois cohomology given by Theorems \ref{MTB} and \ref{MTC}. These results allow to move from positive characteristic to characteristic $0$ fields, and from fields with higher cohomological dimension to lower cohomological dimension.

As noted in the Introduction, these two results concern a \emph{countable} field $K$, so that we will also need a statement that allows us to move from uncountable to countable fields. This is the subject of Section \ref{sec unc}.

\subsection{From uncountable to countable fields}\label{sec unc}

In the case of perfect fields, the following result seems to be well-known to experts in Model Theory and a proof can be given by using the L\"owenheim-Skolem Theorem and by slightly adapting the proof of \cite[Thm.~2.5]{Zoe}\footnote{We thank an anonymous referee for pointing out this reference.}. We give here below an elementary algebraic proof that also applies to imperfect fields. Similar arguments will also be used in the proofs of Theorems \ref{MTB} and \ref{MTC}.

\begin{proposition}\label{prop unc}
Let $K$ be a field with cohomological dimension $\delta$. Let $K_0$ be a countable subfield of $K$. Then there exists a countable subextension $K_\infty$ of $K/K_0$ that has cohomological dimension $\leq \delta$.
\end{proposition}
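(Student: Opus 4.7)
The plan is to construct $K_\infty$ as the union of an increasing chain of countable subfields $K_0 \subset K_1 \subset K_2 \subset \cdots$ of $K$, where at each stage one adjoins countably many elements of $K$ designed to (a) witness the vanishing in $K$ of every relevant cohomology class over every finite extension of $K_n$, and (b) preserve a bound on $[K_\infty : K_\infty^p]$ in positive characteristic. The countable union of countable sets being countable, $K_\infty$ will be countable, and its cohomological dimension bound will follow from the criterion in Proposition \ref{prop Serre} together with Kato's definition.

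For (a), I fix a stage $n$ and observe that $K_n$ has countably many finite extensions inside a fixed algebraic closure $\overline{K}$, and that for each such extension $L$ and each prime $\ell$, the groups $H^{\delta+1}(L, \mu_\ell^{\otimes(\delta+1)})$ (for $\ell \neq \mathrm{char}\,K$) and $H^{\delta+1}_p(L)$ (in positive characteristic) are countable. Since $\mathrm{cd}(K) \leq \delta$, every such class $\alpha$ dies in $H^{\delta+1}(L \cdot K, \mu_\ell^{\otimes(\delta+1)})$ or $H^{\delta+1}_p(L \cdot K)$, the extension $L \cdot K$ being finite over $K$. The key point is that both cohomology theories commute with filtered colimits of fields: Galois cohomology with finite coefficients by the usual argument, and $H^{\delta+1}_p(-)$ because $\Omega^{*}$ does. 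Thus $\alpha$ already dies in some finite intermediate field $L \subset L' \subset L \cdot K$, and I adjoin generators of $L'/L$ to $K_{n+1}$. Enumerating all triples $(\alpha, L, \ell)$ adds only countably many witnesses at each stage.

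For (b), in characteristic $p > 0$ the hypothesis $\mathrm{cd}_p(K) \leq \delta$ forces $[K : K^p] \leq p^\delta$, so I fix a $p$-basis $a_1, \ldots, a_r$ of $K$ with $r \leq \delta$ and include it in $K_0$. At each stage, for every $x \in K_n$, I write uniquely $x = \sum_I c_I^p \, a^I$ with $I = (i_1, \ldots, i_r)$, $0 \leq i_j < p$, $a^I = a_1^{i_1}\cdots a_r^{i_r}$, and $c_I \in K$, and adjoin the finitely many $c_I$'s to $K_{n+1}$. Then in $K_\infty$ the same decomposition holds with $c_I \in K_\infty$, so the monomials $a^I$ generate $K_\infty$ over $K_\infty^p$; they are also $K_\infty^p$-linearly independent because they are $K^p$-linearly independent. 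Hence $[K_\infty : K_\infty^p] = p^r \leq p^\delta$.

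To conclude, any finite (separable) extension $L$ of $K_\infty$ is of the form $L_0 \cdot K_\infty$ for some finite extension $L_0$ of some $K_n$, and any cohomology class over $L$ is the image of a class at some intermediate stage $L_m = L_0 \cdot K_m$, which is killed by construction in $L_{m+1}$, hence in $L$. The main obstacle I expect is the careful bookkeeping required to interleave all three tasks (killing classes for each $\ell \neq p$, killing $H^{\delta+1}_p$-classes, and chasing $p$-basis coefficients) while keeping every $K_n$ countable and ensuring that the cohomological dimension criterion of Proposition \ref{prop Serre} can be applied to $K_\infty$; the essential technical input is the filtered-colimit continuity of the cohomology theories involved, which is what makes every single class killable by adjoining only finitely many elements.
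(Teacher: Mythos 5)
Your construction is essentially the paper's: an increasing chain of countable subfields of $K$, obtained by adjoining at each stage finitely generated witnesses for the vanishing of every class in $H^{\delta+1}(\cdot,\mu_\ell^{\otimes(\delta+1)})$ or $H^{\delta+1}_p(\cdot)$ over every finite extension of the current stage, plus the $p$-basis coefficients in positive characteristic; the paper differs only in bookkeeping (one triple per step via a pairing bijection $\sigma$, and the $p$-basis step isolated as Lemma \ref{lem p-base}), and it uses the same continuity-of-cohomology input, so in substance your argument is the correct one. One step, however, is misstated and would fail as written: since $K$ need not be algebraic over $K_n$, a class $\alpha$ over $L$ that dies in $L\cdot K$ need \emph{not} die in any intermediate field $L'$ that is \emph{finite} over $L$ --- for instance, $K=\mathbb{R}(x,y:\,x^2+y^2=-1)$ has $\mathrm{cd}(K)=2$, and $(-1,-1,-1)\in H^3(\mathbb{Q},\mu_2^{\otimes 3})$ dies in $K$ but not in any subfield of $K$ finite over $\mathbb{Q}$, all of which are real number fields --- and moreover generators of such an $L'$ over $L$ need not lie in $K$, so adjoining them could push your chain outside $K$. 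What filtered-colimit continuity actually gives, and what you should adjoin, is a \emph{finitely generated} subextension $E$ of $K/K_n$ (generators taken in $K$) with $\alpha|_{L\cdot E}=0$; this is exactly how the paper chooses its fields $L'_{i+1}\subset K$, and with that correction the rest of your argument, including the $p$-basis bookkeeping and the passage to the limit, goes through as in the paper.
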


\begin{remarque}
By adapting the subsequent proof of Proposition \ref{prop unc} and by using transfinite induction, it is possible to replace $K_0$ by any infinite subfield of $K$ in the statement and conclude that there exists a subextension $K_\infty$ of $K/K_0$ that has the same cardinality as $K_0$ and that has cohomological dimension $\leq \delta$.
\end{remarque}

In order to deal with the case of imperfect fields, we need first the following result.

\begin{lemma}\label{lem p-base}
Let $K$ be a field of characteristic $p>0$ such that $[K:K^p]=p^\delta$ and let $\alpha_1,\ldots,\alpha_\delta$ be a $p$-basis. Let $K_0$ be a countable subfield of $K$. Then there exists a countable subextension $\Theta(K_0)$ of $K/K_0$ with the same $p$-basis as $K$.
\end{lemma}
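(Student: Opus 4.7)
The plan is to construct $\Theta(K_0)$ by an iterative closure procedure that, starting from $K_0$, keeps adjoining the ``coordinates'' of previously produced elements with respect to the $p$-basis $\alpha_1,\ldots,\alpha_\delta$.

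More precisely, I would first set $K'_0 := K_0(\alpha_1,\ldots,\alpha_\delta)$, which is still countable. Since $\{\alpha_1^{e_1}\cdots \alpha_\delta^{e_\delta} : 0\le e_i\le p-1\}$ is a basis of $K$ over $K^p$, every $x\in K$ admits a unique expression
\[x = \sum_{e} \bigl(z_e^{(x)}\bigr)^p\,\alpha_1^{e_1}\cdots \alpha_\delta^{e_\delta}, \qquad z_e^{(x)}\in K.\]
I would then define inductively
\[K_{n+1} := K_n\bigl(\{z_e^{(x)} : x\in K_n,\ 0\le e_i\le p-1\}\bigr),\]
which remains countable at each step, and set $\Theta(K_0) := \bigcup_{n\ge 0} K_n$. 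This is a countable subfield of $K$ containing $K_0\cup\{\alpha_1,\ldots,\alpha_\delta\}$.

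To finish, I would verify that $(\alpha_1,\ldots,\alpha_\delta)$ is still a $p$-basis of $\Theta(K_0)$. For spanning: any $x\in\Theta(K_0)$ belongs to some $K_n$, so by construction its coordinates $z_e^{(x)}$ lie in $K_{n+1}\subset\Theta(K_0)$, and hence $x$ is a $\Theta(K_0)^p$-linear combination of the monomials $\alpha_1^{e_1}\cdots \alpha_\delta^{e_\delta}$. For linear independence: if $\sum_e y_e\,\alpha_1^{e_1}\cdots \alpha_\delta^{e_\delta} = 0$ with $y_e\in\Theta(K_0)^p\subset K^p$, the $K^p$-independence of these monomials in $K$ forces $y_e=0$. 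Together these facts give $[\Theta(K_0):\Theta(K_0)^p]=p^\delta$ with $\alpha_1,\ldots,\alpha_\delta$ as a $p$-basis.

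The only real obstacle is conceptual: one must close $K_0$ up not only under field operations, but also under the ``coordinate-extraction'' maps $x\mapsto z_e^{(x)}$, since otherwise the natural expression of $x$ via the monomials would require denominators outside of $\Theta(K_0)$. The iterative construction above achieves exactly this closure, while countability is preserved at every stage because only countably many new elements are introduced.
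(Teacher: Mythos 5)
Your proposal is correct and follows essentially the same route as the paper: adjoin the $p$-basis to $K_0$, iteratively close under extraction of the coordinates of each element with respect to the monomials $\alpha_1^{e_1}\cdots\alpha_\delta^{e_\delta}$, and take the union, noting countability at each step. The independence verification you add is automatic (since $\Theta(K_0)^p\subset K^p$) and is implicitly used in the paper as well, so there is no substantive difference.
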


\begin{proof}
For each $x\in K$, write
\[x=\sum_{0\leq i_1,\ldots,i_\delta<p}\lambda^p_{x,i_1,\ldots,i_\delta}\alpha_1^{i_1}\cdots\alpha_\delta^{i_\delta},\]
with $\lambda_{x,i_1,\ldots,i_\delta}\in K$. We then define, for an arbitrary subfield $L$ of $K$ containing $\alpha_1,\dots,\alpha_\delta$,
\[\theta(L):=L(\lambda_{x,i_1,\ldots,i_\delta}:0\leq i_1,\ldots,i_\delta<p, x\in L).\]
Consider now the field $K_1:=K_0(\alpha_1,\ldots,\alpha_\delta)$ and, for $i\geq 1$, define $K_{i+1}:=\theta(K_i)$. We claim that
\[\Theta(K_0):=\bigcup_{i\in\mathbb{N}} K_i\]
satisfies the statement of the Lemma. Indeed, by construction, it is a countable subextension of $K/K_0$. Moreover, for $x\in \Theta(K_0)$, there exists $i_0\in\N$ such that $x\in K_{i_0}$ and thus, we can always write
\[x=\sum_{0\leq i_1,\ldots,i_\delta<p}\lambda^p_{x,i_1,\ldots,i_\delta}\alpha_1^{i_1}\cdots\alpha_\delta^{i_\delta},\]
with $\lambda_{x,i_1,\ldots,i_\delta}\in K_{i_0+1}\subset \Theta(K_0)$. This proves that $\alpha_1,\dots,\alpha_\delta$ is a $p$-basis of $\Theta(K_0)$.
\end{proof}

\begin{proof}[Proof of Proposition \ref{prop unc}]
Fix an algebraic closure $\overline{K}$ of $K$ and, if $p>0$, a $p$-basis of $K$. All fields considered in this proof, in particular composite fields, will be assumed to be subfields of $\overline{K}$. We also fix the following:
\begin{itemize}
\item  We let $\sigma=(\sigma_1,\sigma_2): \mathbb{N} \rightarrow \mathbb{N}^2$ be a bijection such that $\sigma_1(n)\leq n$ for each $n \in \mathbb{N}$.
\item For each countable subextension $L$ of $K/K_0$, we define $\mathcal{E}_L$ as the set of triples $(M, \ell, a)$ such that $M$ is a finite extension of $L$, $\ell$ is a prime number and $a$ is an element in
\[H^{\delta+1}_\ell(M):=\begin{cases}
H^{\delta+1}(M,\mu_\ell^{\otimes (\delta+1)}) & \text{if } \ell\neq p;\\
H^{\delta+1}_p(M) & \text{if } \ell=p.
\end{cases}\]
Note that such a set is countable. Indeed, the set of finite extensions $M$ of $L$ is countable, the set of prime numbers is also countable, and if $M$ is a finite extension of $L$, it is automatically countable and hence so is the group $H^{\delta+1}_\ell(M)$ (by definition for $\ell=p$ and by the isomorphism $H^{\delta+1}(M,\mu_\ell^{\otimes (\delta+1)})\cong \K_{\delta+1}(M)/\ell$ given by the Bloch-Kato conjecture and the definition of Milnor $K$-theory when $\ell\neq p$). So we may and do assume that the set $\mathcal{E}_L$ comes with a given numbering.
\end{itemize}

Let us now inductively construct an increasing infinite sequence of subfields $(L_i)_{i\geq 1}$ of $K$ in the following way. We set first $L_1:=K_0$. Then, for $i \geq 1$, consider the $\sigma_2(i)$-th term of the set $\mathcal E_{L_{\sigma_1(i)}}$, which we denote by $(M_i,\ell_i,a_i)$. Since $\sigma_1(i)\leq i$, we have $L_{\sigma_1(i)}\subset L_{i}$. Moreover, since $K$ has cohomological dimension $\delta$, we have $a_{i}|_{M_i K}=0$. Hence we may and do define $L'_{i+1}$ as a finitely generated extension of $L_{i}$ contained in $K$ such that $a_{i}|_{M_iL'_{i+1}}=0$, and we set 
\[L_{i+1}:=\begin{cases}
L'_{i+1} & \text{if } p=0;\\
\Theta(L'_{i+1}) & \text{if } p>0.
\end{cases}\]

We introduce the field:
$$K_\infty:= \bigcup_{i\geq 1} L_i.$$
We claim that $K_\infty$ satisfies the conditions of the theorem. Indeed, all the $L_i$'s are countable fields, and hence so is $K_\infty$. Moreover, if $p>0$, we know by Lemma \ref{lem p-base} that $L_i$ has the same $p$-basis as $K$ for $i\geq 2$, and hence $[K_\infty:K_\infty^p]=[K:K^p]\leq p^\delta$. Finally, according to the definition of the cohomological dimension and to Proposition \ref{prop Serre}, in order to prove that $\mathrm{cd}(K_\infty)\leq \delta$, it suffices to check that $H_\ell^{\delta+1}(M_\infty)=0$ for each finite extension $M_\infty/K_\infty$ and each prime number $\ell$. We henceforth fix an element $a \in H_\ell^{\delta+1}(M_\infty)$. We can then find an integer $i \geq 1$, a finite extension $M$ of $L_i$ and an element $b\in H^{\delta+1}_\ell(M)$ such that $MK_\infty=M_\infty$ and $b|_{M_\infty}=a$. Now, the triple $(M,\ell,b)$ is the $j$-th element of $\mathcal E_{L_i}$ for some $j\in\mathbb N$. This implies that $b|_{ML'_{\sigma^{-1}(i,j)+1}}=0$ by construction. By the inclusion $ML'_{\sigma^{-1}(i,j)+1}\subset ML_{\sigma^{-1}(i,j)+1}\subset MK_\infty=M_\infty$, we deduce that $a=b|_{M_\infty}=0$, as wished.
\end{proof}

\subsection{From positive to zero characteristic}\label{sec pos zero}

\begin{theorem}\label{thm car p vers car 0}
Let $\tilde K$ be a complete discrete valuation field of characteristic $0$ with countable residue field $K$ of cohomological dimension $\delta$. Then there exists a totally ramified extension $\tilde K_\dagger/\tilde K$ with cohomological dimension $\delta$.
\end{theorem}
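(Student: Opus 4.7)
The plan is to construct $\tilde K_\dagger$ as the union of a tower
\[\tilde K = \tilde K_0 \subset \tilde K_1 \subset \tilde K_2 \subset \cdots\]
of complete discrete valuation fields, where each $\tilde K_{n+1}/\tilde K_n$ is finite and totally ramified, so that every $\tilde K_n$ retains residue field $K$. To bound $\cd(\tilde K_\dagger) \leq \delta$, we will invoke Proposition \ref{prop Serre} (together with Kato's criterion in terms of $H^{\delta+1}_p$ for the residue characteristic): it suffices, for each prime $\ell$, to verify that $H^{\delta+1}_\ell(L) = 0$ for every appropriate finite separable extension $L$ of $\tilde K_\dagger$. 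Any such $L$ is of the form $M \cdot \tilde K_\dagger$ for some finite extension $M/\tilde K_{n_0}$, and by continuity of Galois cohomology each class in $H^{\delta+1}_\ell(L)$ is restricted from a class in $H^{\delta+1}_\ell(M \tilde K_n)$ for some $n \geq n_0$. The inductive construction will thus aim at making all such classes die further up the tower.

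For a prime $\ell$ distinct from the residue characteristic $p$ of $K$, the argument is direct. The standard residue exact sequence for the complete discrete valuation field $M\tilde K_n$ identifies $H^{\delta+1}(M\tilde K_n, \mu_\ell^{\otimes(\delta+1)})$ with $H^\delta(K_M, \mu_\ell^{\otimes\delta})$ (the unramified contribution vanishes because $\cd(K_M) \leq \delta$), and under a totally ramified extension of ramification index $e$ the residue is multiplied by $e$. An $\ell$-torsion class is therefore annihilated as soon as $\ell \mid e$. Arranging the tower so that the ramification indices $e(\tilde K_n/\tilde K)$ become divisible by arbitrarily large powers of every prime then handles all $\ell \neq p$ uniformly.

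The real obstacle is the case $\ell = p$. Here we exploit Kato's filtration on $\kk_{\delta+1}(M\tilde K_n) = H^{\delta+1}_p(M\tilde K_n)$ recalled in Section \ref{sec K-th}, whose successive quotients are described by the explicit isomorphisms $\rho^{\delta+1}_i$ in terms of $K_M$-data: $\kk_{\delta+1}(K_M) \oplus \kk_\delta(K_M)$ at depth $0$, either $\Omega^{\delta}_{K_M}$ or $\Omega^{\delta}_{K_M}/\ker(d) \oplus \Omega^{\delta-1}_{K_M}/\ker(d)$ at intermediate depths (depending on whether the depth is coprime to $p$ or divisible by $p$), and $H^{\delta+1}_p(K_M) \oplus H^\delta_p(K_M)$ at the top depth $\epsilon_{\tilde K}$. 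Since $K$ is countable, so are all these groups, and we can enumerate generators. A careful analysis of how the $\rho^{\delta+1}_i$ transform under a finite totally ramified extension of degree $p$---which rescales $\epsilon_{\tilde K}$ by $p$ and moves depth $i$ to depth $pi$, sending $\Omega^{\delta}_{K_M}$ into $\Omega^{\delta}_{K_M}/\ker(d)$ and thereby killing exact forms---should show that iterating such extensions progressively exhausts each graded piece; the top-depth piece is then handled using that $H^{\delta+1}_p(K_M) = 0$.

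The technical heart will then be a diagonal bookkeeping argument in the spirit of the proof of Proposition \ref{prop unc}: fix a bijection $\sigma = (\sigma_1, \sigma_2): \N \to \N^2$ with $\sigma_1(n) \leq n$, enumerate at each stage the countable set of data $(M, \ell, a)$ attached to $\tilde K_{\sigma_1(n)}$---where $a$ lies either in $H^\delta(K_M, \mu_\ell^{\otimes\delta})$ for some $\ell \neq p$ or in a graded piece of Kato's filtration for $\ell = p$---and choose $\tilde K_{n+1}$ as a finite totally ramified extension of $\tilde K_n$ that kills the $\sigma_2(n)$-th such datum, while also incorporating at every step a new root of a uniformizer of prescribed prime order to maintain the ramification-divisibility required in the second paragraph. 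The hard part will be verifying that this scheme in fact kills \emph{every} class in $H^{\delta+1}_p$ of every finite separable extension of $\tilde K_\dagger$, not merely the enumerated ones: this is where both the countability of $K$ (to render the enumeration exhaustive) and the precise interaction between Kato's filtration and ramification rescaling become essential.
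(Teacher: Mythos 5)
Your overall architecture (handle $\ell\neq p$ by making ramification indices divisible by arbitrarily large powers of each prime, handle $\ell=p$ through Kato's filtration, and glue everything by a diagonal enumeration as in Proposition \ref{prop unc}) is the same as the paper's, but the step you describe as "a careful analysis of how the $\rho^{\delta+1}_i$ transform under a totally ramified degree-$p$ extension \dots should show that iterating such extensions progressively exhausts each graded piece" is precisely the technical heart, and the mechanism you propose for it does not work. Adjoining roots of a uniformizer kills the depth-$0$ part $\{\pi,\dots\}$ and moves a class at depth $i$ prime to $p$ to depth $pi$; Lemma \ref{lem calculitos}(ii) then rewrites it at depth roughly $v(p)+i/p$, but since $\epsilon$ is rescaled by $p$ as well, the class never reaches the top of the filtration: after the reductions one is left with a symbol $\{1+\pi^{\ell_0}\tilde d_0,\tilde\alpha_1,\dots,\tilde\alpha_\delta\}$ with $v(p)<\ell_0<\epsilon$ and $(\ell_0,p)=1$, and no amount of further uniformizer-root extensions (nor the appearance of $\Omega^{\delta}/\ker(d)$, which in this situation is already zero) kills it. Killing such a symbol requires a genuinely different extension, obtained by adjoining $p$-th roots of units of the form $1+\pi^{\ell_0}\sigma(\tilde d_1)$; and because the class lives over a finite extension $\tilde L$ whose residue extension $L/K$ is nontrivial while the theorem demands an extension \emph{totally ramified over $\tilde K$}, one must descend this Kummer construction to the base. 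This is the content of Proposition \ref{proposition cle}, whose proof needs the choice of $\lambda_0$ outside the affine subspaces $Y_{\mathbf m}$ (to get a $(\mathbb Z/p)^{n}$-extension that is totally ramified), and the splitting of $1\to H'\to G\to H\to 1$ via $H^2(H,\mathbb{Z}/p\mathbb{Z}[H])=0$ to produce the totally ramified subfield $\tilde K'$ of $\tilde L'$. None of this is present, even in outline, in your proposal.

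A second, smaller gap: your enumeration runs over all pairs $(M,a)$ with $M$ a finite extension of some $\tilde K_n$, but you do not justify that this collection is countable, nor how classes over arbitrary (possibly wildly ramified, non-Galois) finite extensions of $\tilde K_\dagger$ are reached by the enumeration. The paper restricts the bookkeeping to \emph{tamely ramified Galois} extensions with separable residue extension --- a family that is countable because $K$ is countable and tame extensions are generated by roots of a uniformizer --- and then invokes Lemma \ref{lem coho dim}, a d\'evissage using $\cd(\tilde K)=\delta+1$, to show that vanishing of $H^{\delta+1}(\cdot,\mu_p^{\otimes(\delta+1)})$ over those extensions already forces $\cd_p(\tilde K_\dagger)\leq\delta$. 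You would need an analogous reduction for your scheme to close. The treatment of $\ell\neq p$ and the diagonal bookkeeping are fine as sketched, but as it stands the $p$-primary part of the argument is missing its key lemma.
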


\begin{remarque}
    It is highly likely that the countability assumption of the theorem can be removed by adapting the subsequent proof and by using transfinite induction. Doing so would however significantly increase the technicality of the proof, and for applications we will not need such a general statement.
\end{remarque}

The main ingredient in order to prove Theorem \ref{thm car p vers car 0} is the following proposition.

\begin{proposition}\label{proposition cle}
Let $\tilde K$ be a complete discrete valuation field of characteristic $0$ with infinite residue field $K$ of characteristic $p>0$. Let $\delta$ be the cohomological dimension of $K$ and let $\tilde L/\tilde K$ be a finite unramified Galois extension with residue field extension $L/K$. Assume that $\tilde K$ contains a primitive $p$-th root of unity and that it contains $\sqrt{-1}$ if $p=2$. Then, for any element $a\in\kk_{\delta+1}(\tilde L)=\K_{\delta+1}(\tilde L)/p$, there exists a finite and totally ramified extension $\tilde K'/\tilde K$ of $p$-primary degree such that $a$ is trivial when restricted to $\tilde K'\tilde L$.
\end{proposition}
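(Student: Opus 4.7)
The plan is to work through Kato's filtration on $\kk_{\delta+1}(\tilde L)$,
\[\kk_{\delta+1}(\tilde L) \supset u^1_{\delta+1}(\tilde L) \supset \cdots \supset u^{\epsilon_{\tilde L}}_{\delta+1}(\tilde L) \supset 0,\]
and to kill the class $a$ graded piece by graded piece, each time by a totally ramified $p$-primary extension of $\tilde K$, compositing at the end.

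First I would exploit the hypothesis $\mathrm{cd}(K)=\delta$: since $\tilde L/\tilde K$ is unramified, $[L:L^p]=[K:K^p]\leq p^\delta$, so
\[\kk_{\delta+1}(L)=0,\qquad\Omega_L^{\delta+1}=0,\qquad H_p^{\delta+1}(L)=0,\]
and hence $\ker d=\Omega_L^\delta$, i.e.\ $\Omega_L^\delta/\ker d=0$. Inspecting the isomorphisms~(\ref{eq Kato i})--(\ref{eq Kato iv}), this means the only possibly nonzero summands appearing in the graded pieces are $\kk_\delta(L)$ (inside $u^0/u^1$), $\Omega_L^\delta$ (inside $u^i/u^{i+1}$ when $(i,p)=1$ and $0<i<\epsilon_{\tilde L}$), $\Omega_L^{\delta-1}/\ker d$ (inside $u^i/u^{i+1}$ when $p\mid i$ and $0<i<\epsilon_{\tilde L}$), and $H_p^\delta(L)$ (inside $u^{\epsilon_{\tilde L}}$).

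Any representative symbol in which the uniformizer $\pi$ appears explicitly---that is, every symbol originating in $\kk_\delta(L)$, $\Omega_L^{\delta-1}/\ker d$, or $H_p^\delta(L)$---is annihilated by the degree-$p$ totally ramified Kummer extension $\tilde K(\sqrt[p]{\pi})$, via the identity $\{\pi,\cdot\}=\{\pi_1^p,\cdot\}=p\{\pi_1,\cdot\}\equiv 0\pmod p$ with $\pi_1:=\sqrt[p]{\pi}$. The genuine difficulty lies with the pieces isomorphic to $\Omega_L^\delta$, whose representative symbols $\{1+\pi^i\tilde x,\tilde y_1,\ldots,\tilde y_\delta\}$ consist entirely of units. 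For these, using that $L/K$ is separable so that a $p$-basis of $K$ is also a $p$-basis of $L$, one may assume $\tilde y_j\in\tilde K$; then Kummer extensions of the form $\tilde K(\sqrt[p]{N_{\tilde L/\tilde K}(1+\pi^i\tilde z)})$, which are totally ramified of degree dividing $p$ because $N_{\tilde L/\tilde K}$ sends principal units of $\tilde L$ to principal units of $\tilde K$ of the same depth, can be combined with the surjectivity of $\mathrm{Tr}_{L/K}$ and the identity $\sum_\sigma\sigma\{1+\pi^i\tilde x,\tilde y_1,\ldots\}=\{N_{\tilde L/\tilde K}(1+\pi^i\tilde x),\tilde y_1,\ldots\}$ to shrink and eventually annihilate the class in each such piece.

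The main obstacle is the filtration bookkeeping under iterated extensions: each degree-$p$ totally ramified extension multiplies $\epsilon$ by $p$ and relocates each graded piece to a new filtration level, so one must verify by a careful induction on $i$ (and on the depth of the class inside $\Omega^\delta_L$) that only finitely many extensions are required to annihilate every nontrivial image of $a$. A possibly cleaner alternative is a Galois-theoretic argument: letting $\tilde K^{tr}/\tilde K$ be the maximal totally ramified pro-$p$ extension, the composite $\tilde L\tilde K^{tr}$ coincides with the maximal totally ramified pro-$p$ extension of $\tilde L$ (because $\tilde L/\tilde K$ is unramified, the absolute inertia subgroups of $\tilde K$ and $\tilde L$ coincide); a Hochschild--Serre argument combined with $\mathrm{cd}_p(L)=\delta$ then yields $\mathrm{cd}_p(\tilde L\tilde K^{tr})\leq\delta$, whence via Bloch--Kato the class $a$ vanishes in $\tilde L\tilde K^{tr}$ and, by continuity of Galois cohomology, already in some finite subextension $\tilde L\tilde K'$ of the required form.
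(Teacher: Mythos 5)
Your reduction to the graded pieces of Kato's filtration, and the observation that $\tilde K(\sqrt[p]{\pi})$ disposes of every symbol in which $\pi$ occurs, agree with the paper's first step (Lemma \ref{lem calculitos} together with the passage to $\tilde L(\sqrt[p]{\pi})$). The gap is in your treatment of the pieces isomorphic to $\Omega^\delta_L$, which is exactly where the difficulty of the proposition lies. Such a piece is represented by $\{1+\pi^{i}\tilde x,\tilde\alpha_1,\dots,\tilde\alpha_\delta\}$ with $x$ an \emph{arbitrary} element of $L$, and this class is in general not invariant under $H:=\mathrm{Gal}(\tilde L/\tilde K)$. Your tool, adjoining $\sqrt[p]{N_{\tilde L/\tilde K}(1+\pi^{i}\tilde z)}$ to $\tilde K$, only makes (modulo $p$-th powers, by Lemma \ref{lem prod suma}) a lift of $1+\pi^{i}\mathrm{Tr}_{L/K}(z)$ into a $p$-th power over the compositum; combined with the surjectivity of the trace this kills at most the $K$-rational part of the graded piece, while the identity $\sum_{\sigma\in H}\sigma\{1+\pi^i\tilde x,\tilde\alpha_1,\dots\}=\{N_{\tilde L/\tilde K}(1+\pi^i\tilde x),\tilde\alpha_1,\dots\}$ controls the sum of the Galois conjugates of $a$ (essentially its corestriction), not $a$ itself. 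Nothing in the plan kills $\{1+\pi^i\tilde x,\dots\}$ for a general $x\in L\setminus K$ by an extension that is totally ramified over $\tilde K$ rather than merely over $\tilde L$: adjoining $\sqrt[p]{1+\pi^i\tilde x}$ does the killing but yields an extension of $\tilde L$ with no reason to descend. The paper's proof is devoted precisely to this descent problem: it replaces $\tilde d_0$ by $\tilde d_1=\tilde d_0+\tilde\lambda_0^p$ with $\lambda_0$ chosen generically (this is where the hypothesis that $K$ is infinite enters, a hypothesis your proposal never uses), adjoins the $p$-th roots of \emph{all} conjugates $1+\pi^{\ell_0}\sigma(\tilde d_1)$ so that the resulting field is Galois over $\tilde K$ with Kummer group the induced module $\mathbb{Z}/p\mathbb{Z}[H]$, and splits the resulting extension of Galois groups by Shapiro's lemma to extract the desired totally ramified extension of $\tilde K$. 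Some device of this kind is indispensable and is absent from your plan.

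The ``cleaner alternative'' does not repair this. A maximal totally ramified pro-$p$ extension of $\tilde K$ is not a well-defined object: totally ramified extensions are not stable under composita (already $\tilde K(\sqrt[p]{\pi})$ and $\tilde K(\sqrt[p]{u\pi})$ are totally ramified, while their compositum contains $\tilde K(\sqrt[p]{u})$, whose residue field extension is nontrivial for suitable units $u$), so $\tilde K^{tr}$ can only be chosen non-canonically by Zorn's lemma, and the identification of $\tilde L\tilde K^{tr}$ with a maximal such extension of $\tilde L$ loses its meaning. More seriously, the claim that $\mathrm{cd}_p(\tilde L\tilde K^{tr})\le\delta$ follows from Hochschild--Serre and $\mathrm{cd}_p(L)=\delta$ is unsubstantiated: in mixed characteristic the $p$-cohomological dimension is not read off from the residue field (Kato's theorem gives $\mathrm{cd}(\tilde K)=\delta+1$), and to prove that a maximal totally ramified extension has $p$-cohomological dimension $\le\delta$ one would have to show that any surviving class dies in a further totally ramified extension --- which is precisely the statement of Proposition \ref{proposition cle}. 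In substance your alternative is Theorem \ref{thm car p vers car 0}, which the paper deduces \emph{from} the proposition, so invoking it here is circular.
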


\subsubsection{Preliminaries to the proof of Proposition \ref{proposition cle}}\label{subsection prel proofs}

In the whole Section \ref{sec pos zero}, we fix an algebraic closure $\overline{\tilde K}$ of $\tilde K$, as well as a primitive $p$-th root of unity $\zeta_p\in\overline{\tilde K}$ and a uniformizer $\pi$ in $\tilde K$. We also recall the notations defined in Section \ref{sec K-th}: for $b\in K$, we denote by $\tilde b$ a lift of $b$ to $\cal O_{\tilde K}$ and we set $\epsilon_{\tilde K}:=\frac{v_{\tilde K}(p)p}{p-1}$.\\

Both in Theorem \ref{thm car p vers car 0} and Proposition \ref{proposition cle}, the field $K$ is assumed to have cohomological dimension $\delta$. In particular we have $\kk_{\delta+1}(K)=0$ and $[K:K^p]= p^{\delta_0}$ with $\delta_0\leq\delta$. We henceforth fix a family $(\alpha_1,\dots, \alpha_\delta)$ in $K$ such that the first $\delta_0$ terms form a $p$-basis of $K/K^p$ and $\alpha_i=0$ for $i>\delta_0$. 

Note that the finite separable extension $L$ of $K$ also satisfies $\kk_{\delta+1}(L)=0$ and has the same $p$-basis as $K$. In particular,
\[\Omega_L^\delta=L\left(\frac{d\alpha_1}{\alpha_1}\wedge \dots \wedge \frac{d\alpha_\delta}{\alpha_\delta}\right)\quad \text{and}\quad \Omega_L^{\delta+1}=0,\]
and hence
\[\Omega_L^\delta/\ker(d)=d(\Omega_L^\delta)=0 \quad \text{and} \quad  H_p^{\delta+1}(L)=0.\]
The isomorphisms $\rho_i:=\rho^{\delta+1}_i$ defined in Section \ref{sec K-th} can therefore be described as follows:
\begin{align}
\label{eq Kato i bis}
\rho_0 : \kk_\delta(L) &\to \kk_{\delta+1}(\tilde L)/ u^1_{\delta+1}(\tilde L), \\
\nonumber\{y_1,y_2,\dots,y_\delta\} &\mapsto \{\pi,\tilde y_1,\tilde y_2,\dots,\tilde y_\delta \}; \\
\intertext{for every $0<i<\epsilon_{\tilde L}$ such that $(i,p)=1$,}
\label{eq Kato ii bis}
\rho_i: \Omega^{\delta}_L &\to u^i_{\delta+1}(\tilde L)/ u^{i+1}_{\delta+1}(\tilde L), \\
\nonumber x \,\frac{d\alpha_1}{\alpha_1}\wedge \dots \wedge \frac{d\alpha_\delta}{\alpha_\delta}  &\mapsto \{1+\pi^i\tilde x,\tilde \alpha_1,\tilde \alpha_2, \dots, \tilde \alpha_\delta \}; \\
\intertext{for every $0<i<\epsilon_{\tilde L}$ such that $p|i$,}
\label{eq Kato iii bis}
\rho_i : \Omega^{\delta-1}_L/\ker(d) &\to  u^i_{\delta+1}(\tilde L)/ u^{i+1}_{\delta+1}(\tilde L), \\ 
\nonumber x \,\frac{dy_1}{y_1}\wedge \frac{dy_2}{y_2} \wedge \dots \wedge \frac{dy_{\delta-1}}{y_{\delta-1}}  &\mapsto  \{\pi,1+\pi^i\tilde x,\tilde y_1,\tilde y_2, \dots, \tilde y_{\delta-1} \}; \\
\intertext{for $i=\epsilon_{\tilde L}$,}
\label{eq Kato iv bis}
\rho_{i} : H_p^{\delta}(L) &\to u^{i}_{\delta+1}(\tilde L), \\ 
\nonumber x \,\frac{dy_1}{y_1}\wedge \frac{dy_2}{y_2} \wedge \dots \wedge \frac{dy_{\delta-1}}{y_{\delta-1}}  &\mapsto  \{\pi,1+(\zeta_p-1)^p\tilde x,\tilde y_1,\tilde y_2, \dots, \tilde y_{\delta-1} \}.
\end{align}

\subsubsection{Proof of Proposition \ref{proposition cle}}

We prove first the proposition in the particular case where
\begin{equation}\label{eqn a especifico}
a=\{1+{\pi}^{\ell_0}\tilde d_0,\tilde\alpha_1,\dots,\tilde\alpha_\delta\},
\end{equation}
with $ v_{\tilde L}(p)< \ell_0 < \epsilon_{\tilde L}$, $(\ell_0,p)=1$ and $\tilde d_0\in \mathcal{O}_{\tilde L}^\times$. We start with the following two lemmas:

\begin{lemma}\label{lem prod suma}
Let $\tilde K$ be a complete discrete valuation field of characteristic $0$ whose residue field $K$ has characteristic $p>0$. Let $\ell>v_{\tilde K}(p)$ be an integer, let $\pi$ be a uniformizer in $\tilde K$ and let $\tilde c,\tilde d\in\cal O_{\tilde K}$. Then we have the following equality in $\tilde K^\times/(\tilde K^\times)^p$:
\[(1+\pi^\ell\tilde c)(1+\pi^\ell\tilde d)=1+\pi^\ell (\tilde c+\tilde d).\]
\end{lemma}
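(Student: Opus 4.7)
The plan is to expand the product, factor out the ``linear'' term, and show that the resulting quadratic remainder is automatically a $p$-th power in $\tilde K^\times$ because its valuation is large enough to exceed $\epsilon_{\tilde K}$.

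First, direct expansion yields
\[(1+\pi^\ell\tilde c)(1+\pi^\ell\tilde d) = 1 + \pi^\ell(\tilde c+\tilde d) + \pi^{2\ell}\tilde c\tilde d.\]
Setting $u := 1 + \pi^\ell(\tilde c+\tilde d)$, one observes that $u \in \mathcal{O}_{\tilde K}^\times$ (since $\ell \geq 1$), and rewrites the above as $u \cdot (1+\pi^{2\ell}\tilde c\tilde d\,u^{-1})$. The lemma therefore reduces to showing that $1+\pi^{2\ell}w \in (\tilde K^\times)^p$ for $w := \tilde c\tilde d\,u^{-1} \in \mathcal{O}_{\tilde K}$.

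The next step is to invoke the standard fact that, in any complete discrete valuation field of characteristic $0$ with residual characteristic $p$, the principal unit subgroup $1 + \pi^m\mathcal{O}_{\tilde K}$ is contained in $(\tilde K^\times)^p$ as soon as $m > \epsilon_{\tilde K}$. This is a classical consequence of the bijectivity of the $p$-th power map $1 + \pi^j\mathcal{O}_{\tilde K} \to 1 + \pi^{j+e}\mathcal{O}_{\tilde K}$ for $j > e/(p-1)$, where $e := v_{\tilde K}(p)$; this in turn rests on convergence of the $p$-adic logarithm and exponential on the appropriate subgroups.

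It finally remains to check the numerical inequality $2\ell > \epsilon_{\tilde K}$. Since $\ell$ and $e = v_{\tilde K}(p)$ are integers with $\ell > e$, one has $\ell \geq e+1$ and therefore $2\ell \geq 2e+2 > 2e$. Combined with $\tfrac{pe}{p-1} \leq 2e$ (which holds for every prime $p \geq 2$), this yields $2\ell > \epsilon_{\tilde K}$, as required. I do not foresee any genuine obstacle: the whole argument reduces to a single well-known property of principal units in mixed-characteristic complete discretely valued fields, and the numerical check is elementary.
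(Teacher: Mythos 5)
Your proof is correct, and its skeleton is the same as the paper's: both arguments factor the product as $(1+\pi^\ell(\tilde c+\tilde d))(1+\pi^{2\ell}\tilde x)$ with $\tilde x\in\cal O_{\tilde K}$ and reduce everything to the inequality $2\ell>\epsilon_{\tilde K}$, which follows from $\ell>v_{\tilde K}(p)$ exactly by the elementary estimate you give. The only divergence is in how the factor $1+\pi^{2\ell}\tilde x$ is shown to be a $p$-th power: the paper invokes Kato's isomorphism \eqref{eq Kato iv} applied with $q=0$, whereas you appeal to the classical fact that principal units of level above $\epsilon_{\tilde K}$ are $p$-th powers, via the bijectivity of $x\mapsto x^p$ from $1+\pi^{j}\cal O_{\tilde K}$ onto $1+\pi^{j+v_{\tilde K}(p)}\cal O_{\tilde K}$ for $j>v_{\tilde K}(p)/(p-1)$. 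These are two packagings of the same statement about the unit filtration, but your route is more elementary and has the minor advantage of not resting on the standing hypothesis $\zeta_p\in\tilde K$ under which Kato's isomorphisms are stated in Section 2.2; since the lemma itself makes no such assumption, your justification matches its stated generality directly (in the paper this is harmless, as the lemma is only used where $\zeta_p\in\tilde K$ holds).
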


\begin{proof}
There exists $\tilde x \in \mathcal{O}_{\tilde K}$ such that:
$$(1+\pi^\ell\tilde c)(1+\pi^\ell\tilde d)=(1+\pi^\ell (\tilde c+\tilde d))(1+\pi^{2\ell} \tilde x).$$
Now, since $\ell>v_{\tilde K}(p)$, we have $2\ell>\epsilon_{\tilde K}$. Hence, isomorphism \eqref{eq Kato iv} from Section \ref{sec K-th} applied to $q=0$ tells us that $(1+\pi^{2\ell} \tilde x)$ is a $p$-th power in $\tilde K$.
\end{proof}

\begin{lemma}\label{lem ram}
Let $\tilde K$ be a complete discrete valuation field of characteristic $0$ whose residue field $K$ has characteristic $p>0$. Let $\ell<\epsilon_{\tilde K}$ be an integer prime to $p$, let $\pi$ be a uniformizer in $\tilde K$ and let $\tilde d$ be a unit in $\tilde K$. Then the extension $\tilde K(\sqrt[p]{1+\pi^\ell \tilde d})/\tilde K$ is totally ramified of degree $p$.
\end{lemma}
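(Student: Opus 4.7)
The plan is to analyse the minimal polynomial of $\beta := \alpha-1$, where $\alpha = \sqrt[p]{1+\pi^\ell \tilde d}$, via its Newton polygon. First I would observe that $\beta$ is a root of
\[f(X) := (1+X)^p - (1+\pi^\ell \tilde d) = X^p + \sum_{i=1}^{p-1}\binom{p}{i} X^i - \pi^\ell \tilde d \in \cal O_{\tilde K}[X],\]
and fix any extension of $v_{\tilde K}$ to $\overline{\tilde K}$, still denoted $v_{\tilde K}$. Each middle coefficient $\binom{p}{i}$ (for $1\leq i\leq p-1$) has valuation $v_{\tilde K}(p)$, the constant term has valuation $\ell$, and the leading coefficient has valuation $0$.

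Next I would identify the Newton polygon of $f$. The candidate segment from $(0,\ell)$ to $(p,0)$ has slope $-\ell/p$ and reaches height $\ell(p-i)/p$ at abscissa $i$. The hypothesis $\ell < \epsilon_{\tilde K} = v_{\tilde K}(p)\,p/(p-1)$ rewrites as $v_{\tilde K}(p) > \ell(p-1)/p \geq \ell(p-i)/p$ for every $1\leq i\leq p-1$, so each intermediate point $(i,v_{\tilde K}(p))$ lies strictly above this segment. Consequently the Newton polygon of $f$ is reduced to a single edge of slope $-\ell/p$, which forces every root of $f$ in $\overline{\tilde K}$ to have valuation exactly $\ell/p$.

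The last step is to read off ramification. Since $(\ell,p)=1$, the value $v_{\tilde K}(\beta) = \ell/p$ does not belong to the value group $\frac{1}{e}\mathbb{Z}$ of any finite extension of $\tilde K$ of ramification index $e<p$. The ramification index of $\tilde K(\beta)/\tilde K$ is therefore divisible by $p$; but $\beta$ is a root of a polynomial of degree $p$, so $[\tilde K(\beta):\tilde K]=p$ and the extension must in fact be totally ramified. Since $\tilde K(\alpha)=\tilde K(\beta)$, the conclusion follows. I do not anticipate any serious obstacle here: the only subtle point is the sharp Newton-polygon comparison, and it is captured exactly by the strict inequality $\ell < \epsilon_{\tilde K}$, which is precisely what allows us to rule out the middle coefficients from contributing a second slope.
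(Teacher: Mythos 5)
Your proof is correct and follows essentially the same route as the paper: shift by $1$, apply the Newton polygon to $(X+1)^p-1-\pi^\ell\tilde d$ using $\ell<\epsilon_{\tilde K}$ to get roots of valuation $\ell/p$, and conclude total ramification of degree $p$ from $(\ell,p)=1$. You have merely spelled out the polygon computation that the paper leaves implicit.
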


\begin{proof}
Set $z:=\sqrt[p]{1+\pi^\ell \tilde d}-1\in \tilde K$. It is a zero of the polynomial:
$$\mu:=(X+1)^p-1-\pi^\ell \tilde d\in \tilde K[X].$$
By computing its Newton polygon (cf.~for instance \cite[II, Prop.~6.3]{Neukirch}) and by using the inequality $\ell<\epsilon_{\tilde K}$, one sees that the roots of $\mu$ have valuation $\ell/p$. Since $\ell$ is prime to $p$, this shows that $\tilde K(\sqrt[p]{1+\pi^\ell \tilde d})/\tilde K$ is totally ramified of degree $p$, as wished.
\end{proof}

\begin{proof}[Proof of Proposition \ref{proposition cle} for the symbol \eqref{eqn a especifico}]
Let $H$ be the Galois group of $\tilde L/\tilde K$ (that can also be seen as the Galois group of $L/K$) and let $n$ be its order. Consider $\mathbb{F}_p$ as a subfield of $L$. For each nonzero family $\mathbf{m}:=(m_\sigma)_{\sigma \in H} \in \mathbb{F}_p^H$, consider the sets:
\begin{gather*}
    X_{\mathbf{m}}:=\left\{\lambda \in L : \sum_{\sigma \in H}m_\sigma \sigma(\lambda)^p=0\right\}=\left\{\lambda \in L : \sum_{\sigma \in H}m_\sigma\sigma(\lambda)=0\right\},\\
    Y_{\mathbf{m}}:=\left\{\lambda \in L : \sum_{\sigma \in H}m_\sigma \sigma(\lambda)^p=-\sum_{\sigma \in H}m_\sigma \sigma(d_0)\right\}.
\end{gather*}
Since the elements of $\mathrm{Gal}(L/K)$ are $K$-linearly independent, the sets $X_\mathbf{m}$ are all strict sub-$K$-vector spaces. Moreover, each $Y_{\mathbf{m}}$ is either empty or of the form $y_{\mathbf{m}}+X_{\mathbf{m}}$ for some $y_{\mathbf{m}}\in Y_{\mathbf{m}}$. The field $K$ being infinite, we deduce that:
$$Y:=\bigcup_{\substack{\mathbf{m}\in \mathbb{F}_p^H\\ \mathbf{m}\neq 0}} Y_\mathbf{m} \neq L.$$
In particular, we may and do fix an element $\lambda_0$ in the complement of $Y$ in $L$, as well as a lift $\tilde \lambda_0$ of $\lambda_0$ in $\tilde L$.

Set now $\tilde d_1:=\tilde d_0 + \tilde\lambda_0^p$ and for each $\sigma \in H$ set $y_\sigma:=\sqrt[p]{1+{\pi}^{\ell_0}\sigma(\tilde d_1)}$. Introduce also $x:=\sqrt[p]{{\pi}^{\ell_0}\tilde\alpha_1}$, consider the fields:
$$\tilde L' := \tilde L \left( (y_\sigma)_{\sigma \in H}\right),\;\;\;\;\;\;\;\;
        \tilde L'' := \tilde L' (x)= \tilde L \left( x,(y_\sigma)_{\sigma \in H}\right),$$
and set:
$$H':=\mathrm{Gal}(\tilde L'/\tilde L), \;\;\;\;\; H'':=\mathrm{Gal}(\tilde L''/\tilde L).$$
We start by proving the following three facts about the fields $\tilde L'$ and $\tilde L''$:

\paragraph{Fact 1: The extension $\tilde L'/\tilde K$ is Galois.}

Observe that, for every $\varphi \in \mathrm{Gal}(\tilde K^\sep/\tilde K)$, there exists $\sigma_0 \in H$ such that $\varphi|_{\tilde L}=\sigma_0$ and hence, for each $\sigma \in H$:
$$\varphi(y_\sigma^p)= \sigma_0(y_\sigma^p)=\sigma_0(1+{\pi}^{\ell_0}\sigma(\tilde d_1))=1+ {\pi}^{\ell_0}\sigma_0(\sigma(\tilde d_1))=y_{\sigma_0\sigma}^p.$$
Since $\tilde K$ contains all $p$-th roots of unity, we deduce that the extension $\tilde L'/\tilde K$ is Galois.

\paragraph{Fact 2: The extension $\tilde L'/\tilde L$ is abelian with Galois group $(\Z/p\Z)^{n}$.} Since $\tilde L$ contains all $p$-th roots of unity and $\tilde L'=\tilde L \left( (y_\sigma)_{\sigma \in H}\right)$, it suffices to prove by Kummer theory that the subgroup of $\tilde L^\times/(\tilde L^\times)^p$ generated by $(y^p_\sigma)_{\sigma\in H}$ has order $p^n$. In other words, it suffices to prove that, for any nonzero $(m_\sigma)_{\sigma\in H} \in \{0,\dots, p-1\}^H$:
$$\prod_{\sigma \in H} y_\sigma^{pm_\sigma}\neq 1 \in \tilde L^\times/(\tilde L^\times)^p.$$
Now, using Lemma \ref{lem prod suma} and the assumption that $\ell_0 > v_{\tilde L}(p)$, we compute in $\tilde L^\times/(\tilde L^\times)^p$:
\begin{align*}
    \prod_{\sigma \in H} y_\sigma^{pm_\sigma} & = 1+ \sum_{\sigma\in H} m_\sigma {\pi}^{\ell_0}\sigma(\tilde d_1) \\
    &= 1+ \pi^{\ell_0}\sum_{\sigma\in H} m_\sigma\sigma(\tilde d_1)\\
    &= 1+ \pi^{\ell_0}\sum_{\sigma\in H} m_\sigma(\sigma(\tilde d_0) +\sigma(\tilde\lambda_0)^p)
\end{align*}
By construction of $\tilde \lambda_0$, we have
\[\sum_{\sigma\in H} m_\sigma(\sigma(d_0) +\sigma(\lambda_0)^p)\neq 0\]
in $L$, and hence we get $\prod_{\sigma \in H} y_\sigma^{pm_\sigma}\neq 1 \in \tilde L^\times/(\tilde L^\times)^p$ thanks to Lemma \ref{lem ram} and the assumption that $\ell_0<\epsilon_{\tilde L}$.

\paragraph{Fact 3: The extension $\tilde L''/\tilde L$ is totally ramified.} Since $x^p$ has valuation prime to $p$, Fact 2 easily implies that the subgroup of $\tilde L^\times/(\tilde L^\times)^p$ generated by $x^p$ and by $(y^p_\sigma)_{\sigma\in H}$ has order $p^{n+1}$. Hence the extension $\tilde L''/\tilde L$ is abelian with Galois group $(\Z/p\Z)^{n+1}$. Its degree $p$ subextensions are given by the:
$$\tilde L_{\mathbf{m},r} := \tilde L\left( x^{r}\prod_{\sigma \in H} y_\sigma^{m_\sigma} \right) $$
 for $\mathbf{m}:=(m_\sigma) \in \{0,\dots, p-1\}^H$ and $r\in \{0,\dots, p-1\}$ (with $\mathbf{m}$ and $r$ not both zero). Whenever $r \neq 0$, the valuation of $x^{pr}\prod_{\sigma \in H} y_\sigma^{pm_\sigma}$ in $\tilde L$ is prime to $p$ and hence the extension $\tilde L_{\mathbf{m},r}/\tilde L$ is totally ramified. When $r=0$ and $\mathbf{m}$ is nonzero, the same computations as in Fact 2 show that the product $\prod_{\sigma \in H} y_\sigma^{pm_\sigma}$ belongs to $1+\pi^{\ell_0}\mathcal{O}_{\tilde L}^\times$. Hence, by Lemma \ref{lem ram}, the extension $\tilde L_{\mathbf{m},0}/\tilde L$ is again totally ramified. Summing up, we have proved that the degree $p$ subextensions of the $(\Z/p\Z)^{n+1}$-extension $\tilde L''/\tilde L$ are all totally ramified, and hence $\tilde L''/\tilde L$ is also totally ramified.\\

Back to the proof of Proposition \ref{proposition cle} for the symbol \eqref{eqn a especifico}, Fact 1 allows to introduce the Galois group $G$ of $\tilde L'/\tilde K$, so that we have an exact sequence:
\begin{equation}\label{Galois exact}
1 \rightarrow H' \rightarrow G \rightarrow H \rightarrow 1.    
\end{equation}
Moreover, by Fact 2, we have:
$$H'=\bigoplus_{\sigma \in H} \mathbb{Z}/p\mathbb{Z} \cdot \tau_\sigma,$$ 
where
$$  \tau_\sigma  : y_\rho \mapsto \begin{cases}
        \zeta_p y_\rho \;\; \text{if $\rho=\sigma$}\\
        y_\rho \;\; \text{otherwise.}
    \end{cases} $$
Hence, for each $\sigma,\sigma'\in H$, if we denote by $\tilde \sigma'$ a lifting of $\sigma$ to $G$, we get:
$$\tilde\sigma' \tau_\sigma \tilde \sigma'^{-1}=\tau_{\sigma'\sigma}.$$
In particular, we have $H'=\mathbb{Z}/p\mathbb{Z}[H]$. But by Shapiro's Lemma, $H^2(H,\mathbb{Z}/p\mathbb{Z}[H])=0$. This means by \cite[Thm.~1.2.4]{NSW} that exact sequence \eqref{Galois exact} has a splitting $s: H \rightarrow G$. We introduce the fields:
$$    \tilde K' := \tilde L'^{s(H)},\;\;\;\;\;\;\;\;
        \tilde K'' := \tilde K' (x).$$
By Fact 3, the extension $\tilde L''/\tilde L$ is totally ramified. Hence so is the extension $\tilde K''/\tilde K$. We conclude by using once again Lemma \ref{lem prod suma} to observe that:
\begin{align*}
    a &= \{1+{\pi}^{\ell_0}\tilde d_0,\tilde\alpha_1,\dots,\tilde\alpha_\delta\}\\
    & = \{1+{\pi}^{\ell_0}(\tilde d_1-\tilde\lambda_0^p),\tilde\alpha_1,\dots,\tilde\alpha_\delta\}\\
    & = \{1+{\pi}^{\ell_0}\tilde d_1,\tilde\alpha_1,\dots,\tilde\alpha_\delta\}+\{1-{\pi}^{\ell_0}\tilde\lambda_0^p,\tilde\alpha_1,\dots,\tilde\alpha_\delta\}\\
    &= \{1+{\pi}^{\ell_0}\tilde d_1,\tilde\alpha_1,\dots,\tilde\alpha_\delta\}+\{1-{\pi}^{\ell_0}\tilde\lambda_0^p,{\pi}^{\ell_0}\tilde\lambda_0^p\tilde\alpha_1,\alpha_2,\dots,\tilde\alpha_\delta\}\\
    &= \{1+{\pi}^{\ell_0}\tilde d_1,\tilde\alpha_1,\dots,\tilde\alpha_\delta\}+\{1-{\pi}^{\ell_0}\tilde\lambda_0^p,{\pi}^{\ell_0}\tilde\alpha_1,\alpha_2,\dots,\tilde\alpha_\delta\}
\end{align*}
and hence that $a|_{\tilde L \tilde K''}=a|_{\tilde L''}=0$.
\end{proof}

In order to prove Proposition \ref{proposition cle} in the general case, we need some explicit computations, summarized in the following lemma.

\begin{lemma}\label{lem calculitos}
Keep the hypotheses and notation of Proposition \ref{proposition cle} and Preliminaries \ref{subsection prel proofs}.
\begin{itemize}
\item[(i)] Consider a symbol
$$b_1=\{1+\pi^j \tilde c,\tilde \alpha_1,\tilde \alpha_2,\dots,\tilde \alpha_\delta \}\in \kk_{\delta+1}(\tilde L),$$
where $\tilde c$ is a unit in $\tilde L$ and $j>0$. Then $b_1$ can be written as a sum of symbols of the form
\[\{1+\pi^i \tilde\lambda^p\tilde\alpha_1^{j_1} \dots \tilde\alpha_\delta^{j_\delta},\tilde \alpha_1,\tilde \alpha_2,\dots,\tilde \alpha_\delta\},\]
where $0\leq j_1,\dots,j_\delta<p$, $\tilde\lambda\in\tilde L$ is a unit and $j\leq i<\epsilon_{\tilde L}$.
\item[(ii)] Consider a symbol
\[b_2=\{1+\pi^i \tilde\lambda^p\tilde\alpha_1^{j_1} \dots \tilde\alpha_\delta^{j_\delta},\tilde \alpha_1,\tilde \alpha_2,\dots,\tilde \alpha_\delta\}\in \kk_{\delta+1}(\tilde L),\]
where $0\leq j_1,\dots,j_\delta<p$, $\tilde\lambda\in\tilde L$ is a unit and $0< i<\epsilon_{\tilde L}$. Assume that $p|i$. Then:
\begin{itemize}
\item if $(j_1,\ldots,j_\delta)\neq (0,\ldots,0)$, then $b_2$ is trivial;
\item if $(j_1,\ldots,j_\delta)=(0,\ldots,0)$, then $b_2$ is equal to a symbol of the form
\[\{1+{\pi}^{\ell}\tilde d,\tilde\alpha_1,\dots,\tilde\alpha_\delta\},\]
for some $\ell\geq v_{\tilde L}(p)+\frac{i}{p}$ and some unit $\tilde d$ in $\tilde L$.
\end{itemize}
\end{itemize}
\end{lemma}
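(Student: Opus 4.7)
I would prove part (i) by descending induction on $j$, and tackle part (ii) via two separate computational arguments depending on whether $\vec j=0$.

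For the base case of part (i), $j\geq\epsilon_{\tilde L}$: since $v_{\tilde L}((\zeta_p-1)^p)=\epsilon_{\tilde L}$, we can rewrite $b_1$ as $\{1+(\zeta_p-1)^p\tilde c',\tilde\alpha_1,\ldots,\tilde\alpha_\delta\}$ for some $\tilde c'\in\cal O_{\tilde L}$. By Kato's isomorphism \eqref{eq Kato iv bis} this symbol equals $\rho^{\delta+1}_{\epsilon_{\tilde L}}(\bar c'\,\omega,0)$ with $\omega:=\frac{d\alpha_1}{\alpha_1}\wedge\cdots\wedge\frac{d\alpha_\delta}{\alpha_\delta}$; since $L$ has cohomological dimension $\leq\delta$ and shares its $p$-basis with $K$ (as $L/K$ is finite separable), $H_p^{\delta+1}(L)=0$, hence $b_1=0$ and the statement holds trivially via the empty sum. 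For the inductive step $0<j<\epsilon_{\tilde L}$, I would decompose the reduction $c\in L$ of $\tilde c$ via the $p$-basis as $c=\sum_{\vec k}\mu_{\vec k}^p\alpha^{\vec k}$ with $\vec k\in\{0,\dots,p-1\}^\delta$. Expanding $\prod_{\vec k}(1+\pi^j\tilde\mu_{\vec k}^p\tilde\alpha^{\vec k})$ and comparing with $1+\pi^j\tilde c$ yields a factorization
\[1+\pi^j\tilde c=\prod_{\vec k}(1+\pi^j\tilde\mu_{\vec k}^p\tilde\alpha^{\vec k})\cdot(1+\pi^{j+1}\tilde f)\]
in $\tilde L^\times$ for some $\tilde f\in\cal O_{\tilde L}$. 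Symbolizing against $\tilde\alpha_1,\ldots,\tilde\alpha_\delta$ expresses $b_1$ as a sum of symbols already of the desired form (at level $i=j$), plus $\{1+\pi^{j+1}\tilde f,\tilde\alpha_1,\ldots,\tilde\alpha_\delta\}$, to which the induction hypothesis applies after possibly extracting a $\pi$-power from $\tilde f$ to make the new leading coefficient a unit and increasing the level accordingly.

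For part (ii) when some $j_l>0$, I would apply the Steinberg relation to $u:=1+\pi^i\tilde\lambda^p\tilde\alpha^{\vec j}$, yielding $\{u,-\pi^i\tilde\lambda^p\tilde\alpha^{\vec j}\}=0$ in $\kk_2(\tilde L)$. Modulo $p$, the hypothesis $p\mid i$ kills the $i\{u,\pi\}$ piece, the $p\{u,\tilde\lambda\}$ piece is automatically zero, and $\{u,-1\}=0$ because $-1$ is a $p$-th power in $\tilde L$ (trivially for $p$ odd, and thanks to the hypothesis $\sqrt{-1}\in\tilde K$ for $p=2$); this leaves $\sum_l j_l\{u,\tilde\alpha_l\}=0$. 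Multiplying by $\{\tilde\alpha_1,\ldots,\widehat{\tilde\alpha_l},\ldots,\tilde\alpha_\delta\}$, the $l'=l$ term reproduces $\pm j_l b_2$, while every $l'\neq l$ term contains a repeated $\tilde\alpha_{l'}$ entry and vanishes via $\{x,x\}=\{x,-1\}=0$; since $0<j_l<p$, this forces $b_2=0$. For $\vec j=0$, setting $s:=i/p$, the binomial expansion
\[(1+\pi^s\tilde\lambda)^p=1+\pi^i\tilde\lambda^p+\pi^{v_{\tilde L}(p)+s}\tilde g\]
holds with $\tilde g\in\cal O_{\tilde L}$, the crucial estimate $v_{\tilde L}(p)+s>i$ being ensured by $i<\epsilon_{\tilde L}$. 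Rearranging gives $1+\pi^i\tilde\lambda^p=(1+\pi^s\tilde\lambda)^p(1+\pi^\ell\tilde h)$ with $\ell\geq v_{\tilde L}(p)+i/p$, and taking symbols modulo $p$ annihilates the $p$-th power, leaving $b_2=\{1+\pi^\ell\tilde h,\tilde\alpha_1,\ldots,\tilde\alpha_\delta\}$ (with $\tilde d$ a unit after absorbing any $\pi$-power of $\tilde h$ into $\ell$).

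The most delicate point is the base case of part (i): the simultaneous collapse of symbols of all levels $j\geq\epsilon_{\tilde L}$ relies on $\epsilon_{\tilde L}$ being precisely the filtration step at which Kato's isomorphism targets the group $H_p^{\delta+1}(L)\oplus H_p^\delta(L)$ directly (rather than a graded piece), combined with the vanishing $H_p^{\delta+1}(L)=0$ coming from the cohomological dimension hypothesis; without this, the descending induction would have no starting point.
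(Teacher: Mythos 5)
Your proof is correct and follows essentially the same route as the paper: for (i) the $p$-basis decomposition of $\tilde c$, iteration of the resulting factorization up to level $\epsilon_{\tilde L}$, and the vanishing of the top term via Kato's isomorphism \eqref{eq Kato iv} together with $H_p^{\delta+1}(L)=0$; for (ii) the Steinberg relation combined with $p\mid i$ and $-1\in(\tilde L^\times)^p$ in the first case, and the same extraction of a $p$-th power via the binomial expansion in the second. The only cosmetic differences are that you package the iteration in (i) as reverse induction on the level, and in (ii) you derive $\sum_l j_l\{u,\tilde\alpha_l\}=0$ in $\kk_2(\tilde L)$ and multiply by the complementary symbol, whereas the paper inserts $(-u)^\ell\tilde\alpha_1$ with $\ell j_1\equiv -1\pmod p$ directly into the symbol --- the same ingredients in a slightly different arrangement.
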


\begin{proof}
We prove (i). Since $\{\alpha_1,\alpha_2,\dots,\alpha_\delta\}$ contains a $p$-basis of the residue field $L$, we can write
\[\tilde c\equiv \sum_{0\leq j_1,\dots,j_\delta<p}\tilde\lambda_{j_1,\dots,j_\delta}^p\tilde\alpha_1^{j_1} \dots \tilde\alpha_\delta^{j_\delta} \mod \pi,\]
for some $\tilde\lambda_{j_1,\dots,j_\delta}\in\mathcal{O}_{\tilde L}^\times \cup \{0\}$. This implies that we may always write
\[1+\pi^j \tilde c=\left(\prod_{0\leq j_1,\dots,j_\delta<p}(1+\pi^j \tilde\lambda_{j_1,\dots,j_\delta}^p\tilde\alpha_1^{j_1} \dots \tilde\alpha_\delta^{j_\delta})\right)\cdot (1+\pi^{j'} \tilde d),\]
for some $j'>j$ and $\tilde d$ a unit in $\tilde L$. In particular, we get
$$b_1=\sum_{0\leq j_1,\dots,j_\delta<p}\{1+\pi^j \tilde\lambda_{j_1,\dots,j_\delta}^p\tilde\alpha_1^{j_1} \dots \tilde\alpha_\delta^{j_\delta},\tilde \alpha_1,\tilde \alpha_2,\dots,\tilde \alpha_\delta\}
    +\{1+\pi^{j'} \tilde d,\tilde \alpha_1,\tilde \alpha_2,\dots,\tilde \alpha_\delta\}.$$
Iterating this process, we see that we can always write $b_1$ as a sum of a symbol $b'_1$ of the form:
$$\{1+\pi^{\epsilon} \tilde d_{\epsilon},\tilde \alpha_1,\tilde \alpha_2,\dots,\tilde \alpha_\delta\}$$
with $\epsilon=\epsilon_{\tilde L}$ and $\tilde d_{\epsilon}$ an integer in $\tilde L$, and of symbols of the form
\[\{1+\pi^i \tilde\lambda^p\tilde\alpha_1^{j_1} \dots \tilde\alpha_\delta^{j_\delta},\tilde \alpha_1,\tilde \alpha_2,\dots,\tilde \alpha_\delta\},\]
where $0\leq j_1,\dots,j_\delta<p$, $\tilde\lambda\in\tilde L$ is a unit and $j\leq i<\epsilon_{\tilde L}$. We conclude by observing that $b'_1$ lies in $u_{\delta+1}^{\epsilon}(\tilde L)$ and that it corresponds by isomorphism \eqref{eq Kato iv} to an element of $H^{\delta+1}_p(L)$, which is trivial.\\

We now prove (ii). Assume first that $(j_1,\dots, j_\delta)\neq (0,\dots,0)$. Without loss of generality, we may and do assume that $j_1 \neq 0$. For arbitrary $\ell\in\mathbb{N}$, we have
\begin{align*}
b_2 &= \{1+{\pi}^{i}\tilde\lambda^p\tilde\alpha_1^{j_1}\cdots\tilde\alpha_\delta^{j_\delta}, (-{\pi}^{i}\tilde\lambda^p\tilde\alpha_1^{j_1}\cdots\tilde\alpha_\delta^{j_\delta})^{\ell}\tilde\alpha_1,\tilde\alpha_2,\ldots,\tilde\alpha_\delta\},\\
&=\{1+{\pi}^{i}\tilde\lambda^p\tilde\alpha_1^{j_1}\cdots\tilde\alpha_\delta^{j_\delta}, (-1)^\ell{\pi}^{i\ell}\tilde\lambda^{p\ell}\tilde\alpha_1^{\ell j_1+1},\tilde\alpha_2,\ldots,\tilde\alpha_\delta\}\\
&\hspace{3em}+\sum_{k=2}^\delta \ell j_k\{1+{\pi}^{i}\tilde\lambda^p\tilde\alpha_1^{j_1}\cdots\tilde\alpha_\delta^{j_\delta},\tilde\alpha_k,\tilde\alpha_1,\tilde\alpha_2,\ldots,\tilde\alpha_\delta\}\\
&=\{1+{\pi}^{i}\tilde\lambda^p\tilde\alpha_1^{j_1}\cdots\tilde\alpha_\delta^{j_\delta}, (-1)^\ell{\pi}^{i\ell}\tilde\lambda^{p\ell}\tilde\alpha_1^{\ell j_1+1},\tilde\alpha_2,\ldots,\tilde\alpha_\delta\}
\end{align*}
and since $-1$ is a $p$-th power in $\tilde L$ and $p|i$, the second entry in the last symbol is a $p$-th power provided that $\ell j_1\equiv -1\pmod{p}$. Hence $b_2$ is trivial in $\kk_{\delta+1}(\tilde L)$.

Assume now that $(j_1,\dots, j_\delta)=(0,\dots,0)$. In this case,
\[1+{\pi}^{i}\tilde\lambda^p=(1+{\pi}^{i/p}\tilde\lambda)^p(1+p{\pi}^{i/p}\tilde x),\]
for some $\tilde x\in\tilde L$ with $v_{\tilde L}(\tilde x)\geq 0$. Hence, in $\kk_{\delta+1}(\tilde L)$,
\[b_2=\{1+p{\pi}^{i/p}\tilde x,\tilde\alpha_1,\dots,\tilde\alpha_\delta\}.\]
\end{proof}

\begin{proof}[Proof of Proposition \ref{proposition cle}]
From the isomorphisms \eqref{eq Kato i bis} to \eqref{eq Kato iv bis}, we deduce that the element $a\in\kk_{\delta+1}(\tilde L)$ can always be written as a sum of symbols of the forms
\begin{enumerate}
\item[(a)] $\{\pi,\tilde y_1,\tilde y_2,\dots,\tilde y_\delta \}$, where $\tilde y_1,\tilde y_2\dots,\tilde y_\delta $ are units in $\tilde L$; and 
\item[(b)] $\{1+\pi^i \tilde c,\tilde \alpha_1,\tilde \alpha_2,\dots,\tilde \alpha_\delta \}$, where $\tilde c$ is a unit in $\tilde L$ and $0< i < \epsilon_{\tilde L}$ with $(i,p)=1$.
\end{enumerate}

By applying Lemma \ref{lem calculitos}.(i) to the terms of type (b) that appear in $a$, we see that $a$ can always be written as a sum of symbols of the form (a) and of the form
\[\{1+\pi^i \tilde\lambda^p\tilde\alpha_1^{j_1} \dots \tilde\alpha_\delta^{j_\delta},\tilde \alpha_1,\tilde \alpha_2,\dots,\tilde \alpha_\delta\},\]
where $0\leq j_1,\dots,j_\delta<p$, $\lambda\in\tilde L$ is a unit and $0< i<\epsilon_{\tilde L}$. Observe now that the finite extension $\tilde L':=\tilde L(\sqrt[p]{\pi})$ of $\tilde L$ trivializes any symbol of the form (a), so the restriction of $a$ to $\tilde L'$ is a sum of symbols of the form
\[\{1+{\pi'}^{pi} \tilde\lambda^p\tilde\alpha_1^{j_1} \dots \tilde\alpha_\delta^{j_\delta},\tilde \alpha_1,\tilde \alpha_2,\dots,\tilde \alpha_\delta\},\]
where $0\leq j_1,\dots,j_\delta<p$, $\tilde\lambda\in\tilde L$ is a unit, $0< i<\epsilon_{\tilde L}$ and $\pi':=\sqrt[p]{\pi}$. We may then apply Lemma \ref{lem calculitos}.(ii) to these and deduce that the restriction of $a$ to $\tilde L'$ is a sum of symbols of the form
\[\{1+{\pi'}^{\ell}\tilde d,\tilde\alpha_1,\dots,\tilde\alpha_\delta\},\]
for some $\ell> v_{\tilde L'}(p)$ and some unit $\tilde d$ in $\tilde L'$. And since all these symbols share the components $\tilde\alpha_1,\ldots,\tilde\alpha_\delta$, we may put them together and finally conclude that 
\[a|_{\tilde L'}=\{1+{\pi'}^{\ell_0}\tilde d_0,\tilde\alpha_1,\dots,\tilde\alpha_\delta\},\]
for some $\ell_0> v_{\tilde L'}(p)$ and some $\tilde d_0 \in \tilde L'$.

Observe now that, if $p|\ell_0$, we may apply successively parts (i) and (ii) of Lemma \ref{lem calculitos} to this symbol and obtain that $a|_{\tilde L'}$ is of the form:
\[\{1+{\pi'}^{\ell'_0}\tilde d'_0,\tilde\alpha_1,\dots,\tilde\alpha_\delta\},\]
for some $\ell'_0\geq v_{\tilde L'}(p)+\frac{\ell_0}{p}$. But, if $\ell_0<\epsilon_{\tilde L'}=v_{\tilde L'}(p)p/(p-1)$, then $\ell'_0>\ell_0$. Iterating this argument, we may therefore assume that either $(\ell_0,p)=1$ or $\ell_0\geq \epsilon_{\tilde L'}$. But in the latter case, the isomorphism \eqref{eq Kato iv} and the fact that $H_p^{q+1}(L)=0$ immediately imply that $a|_{\tilde L'}=0$.

We are hence reduced to the case where $v_{\tilde L'}(p)<\ell_0<\epsilon_{\tilde L'}$ and $(\ell_0,p)=1$, so that, up to replacing $\tilde K$ by $\tilde K (\sqrt[p]{\pi})$ and $\tilde L$ by $\tilde L (\sqrt[p]{\pi})=\tilde L (\sqrt[p]{\pi})$, we only need to prove the proposition for
$$a=\{1+{\pi}^{\ell_0}\tilde d_0,\tilde\alpha_1,\dots,\tilde\alpha_\delta\},$$
which is the case we have already dealt with.
\end{proof}

\subsubsection{Proof of Theorem \ref{thm car p vers car 0}}

Let $p$ be the characteristic exponent of $K$ and let $\pi\in\tilde K$ be a uniformizer. The case of a finite residue field is easy to settle. Thus, if $p\neq 1$, up to replacing $\tilde K$ by $\tilde K(\zeta_p)$ (and by $\tilde K(\sqrt{-1})$ if $p=2$), we may assume that $\tilde K$ satisfies the hypotheses of Proposition \ref{proposition cle}.

All extensions of $\tilde K$ considered in this proof, in particular composite fields, will be assumed to be subfields of a fixed algebraic closure of $\tilde K$.\\

Consider a compatible system $(\sqrt[n]{\pi})_{n\geq 1}$ of $n$-th roots of $\pi$ in the fixed algebraic closure of $\tilde{K}$, set $\tilde{K}_n:=\tilde{K}(\sqrt[n]{\pi})$ for each $n \geq 1$, and introduce the field
$$\tilde{K}_{(p')}:= \bigcup_{(n,p)=1} \tilde{K}_n,$$
which is a totally ramified extension of $\tilde K$.

We claim that $\mathrm{cd}_\ell(\tilde{K}_{(p')})\leq \delta$ for every prime $\ell\neq p$. Fix such a prime number $\ell$ and a finite extension $\tilde{L}$ of $\tilde{K}_{(p')}$. There exists an integer $n_0$ prime to $p$ and a finite extension $\tilde{L}_{n_0}$ of $\tilde{K}_{n_0}$ such that  $\tilde{L}=\tilde{L}_{n_0}\tilde{K}_{(p')}$. For each $n \geq n_0$, we set $\tilde{L}_n:=\tilde{L}_{n_0}(\sqrt[n]{\pi})$ and we denote by $L_n$ the residue field of $\tilde{L}_n$. Note that, up to increasing $n_0$, we may assume that $\tilde L_n\cap \tilde K_{(p')}=\tilde K_n$ for every $n\geq n_0$. Then, for each $n \geq n_0$ and $m\geq 1$, the residue maps induce the following commutative diagram:
$$
\xymatrix{
H^{\delta+1}(\tilde{L}_{mn},\mu_\ell^{\otimes (\delta+1)})  \ar[r]^-{\partial}_-\cong & H^\delta(L_{mn},\mu_\ell^{\otimes \delta}) \\
H^{\delta+1}(\tilde{L}_n,\mu_\ell^{\otimes (\delta+1)}) \ar[u]^{\mathrm{Res}_{\tilde{L}_{mn}/\tilde{L}_{n}}} \ar[r]^-{\partial}_-\cong & H^\delta(L_n,\mu_\ell^{\otimes \delta}) \ar[u]_{e_{\tilde{L}_{mn}/\tilde{L}_{n}}\cdot\mathrm{Res}_{L_{mn}/L_{n}}}}
$$
in which the horizontal lines are isomorphisms since $L_r$ has cohomological dimension $\delta$ for every $r$ (cf.~\cite[II.~App.~\S2]{SerreCohGal} and \cite[Prop.~3.3.1]{ColliotSantaBarbara}). And since $\tilde L_n\cap \tilde K_{(p')}=\tilde K(\sqrt[n]{\pi})$, the integer $e_{\tilde{L}_{mn}/\tilde{L}_{n}}$ is always divisible by $m$. Hence, the group
$$H^{\delta+1}(\tilde{L}_{(p')},\mu_\ell^{\otimes (\delta+1)}) =\varinjlim_n H^{\delta+1}(\tilde{L}_{n},\mu_\ell^{\otimes (\delta+1)}),$$
is trivial. This being true for every finite extension $\tilde{L}$ of $\tilde{K}_{(p')}$, we get $\mathrm{cd}_\ell(\tilde{K}_{(p')})\leq \delta$.\\

This deals with the case where $K$ has characteristic $0$, so we assume hereafter that $K$ has characteristic $p>0$. We construct now a second extension $\tilde K_{(p)}$ of $\tilde K$ as follows. We proceed as in Proposition \ref{prop unc}. In particular, we fix the following notations:
\begin{itemize}
\item We let $\sigma=(\sigma_1,\sigma_2): \mathbb{N} \rightarrow \mathbb{N}^2$ be a bijection such that $\sigma_1(n)\leq n$ for each $n \in \mathbb{N}$.
\item For each finite extension $\tilde L$ of $\tilde K$, we define $\mathcal{E}_{\tilde L}$ as the set of pairs $(\tilde M,a)$ such that $\tilde M$ is a finite, tamely ramified, Galois extension of $\tilde L$ with separable residue field extension $M/L$, and $a$ is an element in $\kk_{\delta+1}(\tilde M)=H^{\delta+1}(\tilde M,\mu_p^{\otimes (\delta+1)})$.
\end{itemize}
We claim that, for each finite extension $\tilde L$ of $\tilde K$, the set $\mathcal{E}_{\tilde L}$ is countable. Indeed, since $K$ is countable, so is the set of unramified extensions of $\tilde L$. Moreover, since every tamely ramified extension of the maximal unramified extension of $\tilde L$ consists in taking a root of a fixed uniformizer, we see that tamely ramified extensions of $\tilde L$ are countable as well. Furthermore, for each such extension $\tilde M/\tilde L$, the group $\kk_{\delta+1}(\tilde M)$ is countable, according to the isomorphisms \eqref{eq Kato i} to \eqref{eq Kato iv}. Thus, we may assume that $\mathcal E_{\tilde L}$ comes with a given numbering.\\

Let us now inductively construct an increasing infinite sequence of finite and totally ramified extensions $(\tilde L_i)_{i\geq 1}$ of $\tilde K$ satisfying the following property: the maximal tamely ramified extension $\tilde L^{\mathrm{tam}}_i$ of $\tilde K$ in $\tilde L_i$ is of the form $\tilde K(\sqrt[e_i]{\pi})$ for some $e_i$ prime to $p$. To do so, we set first $\tilde L_1:=\tilde K$. Then, for $i \geq 1$, consider the $\sigma_2(i)$-th term of the set $\mathcal E_{L_{\sigma_1(i)}}$, which we denote by $(\tilde M_i,a_i)$. Of course, since $\sigma_1(i)\leq i$, we have $\tilde L_{\sigma_1(i)}\subset \tilde L_{i}$.

Since $\tilde M_i/\tilde L_i$ is tamely ramified, its ramification degree $e'_i$ is prime to $p$. Set now $e_{i+1}:=e_ie_i'$. By the inductive assumption, $\tilde L^{\mathrm{tam}}_i=\tilde K(\sqrt[e_{i}]{\pi})$, and hence the extension $\tilde L_i(\sqrt[e_{i+1}]{\pi})/\tilde K$ is totally ramified while the extension $\tilde M_i (\sqrt[e_{i+1}]{\pi})/\tilde L_i(\sqrt[e_{i+1}]{\pi})$ is unramified. Applying Proposition \ref{proposition cle} to the unramified extension $\tilde M_i (\sqrt[e_{i+1}]{\pi})/\tilde L_i(\sqrt[e_{i+1}]{\pi})$ and to the symbol $a_i|_{\tilde M_i (\sqrt[e_{i+1}]{\pi})}$, we know that there exists a finite and totally ramified extension $\tilde L_{i+1}/\tilde L_i(\sqrt[e_{i+1}]{\pi})$ of $p$-primary degree such that $a_i|_{\tilde L_{i+1}\tilde M_i}=0$. By construction, we have $\tilde L^{\mathrm{tam}}_{i+1}=\tilde K(\sqrt[e_{i+1}]{\pi})$. We summarize this in Figure \ref{fig field tower}, where we input the \emph{ramification degrees} for each extension.\\

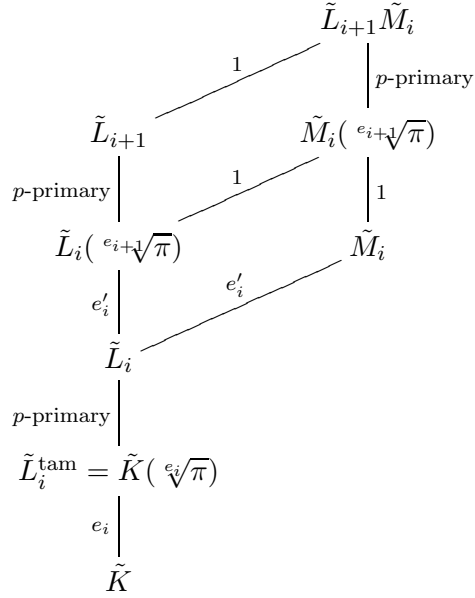
\begin{figure}
\[\xymatrix{
& \tilde L_{i+1}\tilde M_i \ar@{-}[dl]_{1} \ar@{-}[d]^{p\text{-primary}} \\
\tilde L_{i+1} \ar@{-}[d]_{p\text{-primary}} & \tilde M_i(\sqrt[e_{i+1}]{\pi}) \ar@{-}[d]^{1} \ar@{-}[dl]_{1}\\
\tilde L_i(\sqrt[e_{i+1}]{\pi}) \ar@{-}[d]_{e_i'} & \tilde M_i \ar@{-}[dl]_{e_i'} \\
\tilde L_i \ar@{-}[d]_{p\text{-primary}} & \\
\tilde L_i^{\mathrm{tam}}=\tilde K(\sqrt[e_i]{\pi}) \ar@{-}[d]_{e_i} & \\
\tilde K
}\]
\caption{Extensions involved in the construction of $\tilde K_{(p)}$ and their corresponding ramification degrees.}\label{fig field tower}
\end{figure}

Let us now introduce the field:
$$\tilde{K}_{(p)}:= \bigcup_{i\geq 1} \tilde L_i.$$
Observe that the extension $\tilde{K}_{(p)}/\tilde{K}$ is totally ramified and that the maximal tamely ramified extension of $\tilde{K}$ contained in $\tilde{K}_{(p)}$ is contained in $\tilde{K}_{(p')}$. Hence the composite $\tilde{K}_\dagger:=\tilde{K}_{(p)}\tilde{K}_{(p')}$ is totally ramified over $\tilde{K}$.

Recall that, for each prime number $\ell$ other than $p$, we have $\mathrm{cd}_\ell(\tilde{K}_{(p')})\leq \delta$, and hence $\mathrm{cd}_\ell(\tilde{K}_{\dagger})\leq \delta$. To conclude that $\mathrm{cd}(\tilde{K}_{\dagger})\leq \delta$, it therefore suffices to check that $\cd_p(\tilde{K}_{(p)})\leq \delta$. But according to Lemma \ref{lem coho dim} below, that amounts to check that the group $H^{\delta+1}(\tilde M_{(p)},\mu_p^{\otimes (\delta+1)})$ is trivial for each finite tamely ramified Galois extension $\tilde M_{(p)}/\tilde{K}_{(p)}$ with separable residue field extension. We henceforth fix an element $a \in H^{\delta+1}(\tilde M_{(p)},\mu_p^{\otimes (\delta+1)})$. We can then find an integer $i \geq 1$, a finite tamely ramified Galois extension $\tilde M$ of $\tilde L_i$ with separable residue field extension and an element $b\in H^{\delta+1}(\tilde M,\mu_p^{\otimes (\delta+1)})$ such that $\tilde M \tilde{K}_{(p)}=\tilde M_{(p)}$ and $b|_{\tilde M_{(p)}}=a$. Now, the pair $(\tilde M,b)$ is the $j$-th element of $\mathcal E_{\tilde L_i}$ for some $j\in\mathbb N$. This implies that $b|_{\tilde M\tilde L_{\sigma^{-1}(i,j)+1}}=0$ by construction. By the inclusion $\tilde M \tilde L_{\sigma^{-1}(i,j)+1}\subset \tilde M\tilde{K}_{(p)}=\tilde M_{(p)}$, we deduce that $a=b|_{\tilde M_{(p)}}=0$, as wished. \qed

\begin{lemma}\label{lem coho dim}
Let $\tilde K$ be a complete discrete valuation field of characteristic $0$ whose residue field $K$ has characteristic $p>0$. Let $\delta$ be the cohomological dimension of $K$ and let $\tilde L$ be an algebraic extension of $\tilde K$ such that, for each finite tamely ramified Galois extension $\tilde M$ of $\tilde L$ with separable residue field extension $M/L$, the cohomology group $H^{\delta+1}(\tilde M,\mu_p^{\otimes (\delta+1)})$ vanishes. Then $\cd_p(\tilde L)\leq \delta$.
\end{lemma}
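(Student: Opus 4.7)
The plan is to apply Proposition~\ref{prop Serre} with $\ell = p$ (legitimate since $\tilde L$ has characteristic zero): it suffices to show that $H^{\delta+1}(M,\mu_p^{\otimes(\delta+1)}) = 0$ for every finite separable extension $M/\tilde L$ of degree coprime to $p$ containing $\mu_p$. Any such $M$ is automatically tamely ramified over $\tilde L$ with separable residue field extension, since the coprime-to-$p$ condition on the degree forces both the ramification index and the residue degree to be coprime to $p$, making the residue extension separable. Its Galois closure $\tilde M$ over $\tilde L$ is therefore again a finite tamely ramified Galois extension with separable residue field extension (both properties being preserved by composite and Galois closure), so the hypothesis yields $H^{\delta+1}(\tilde M,\mu_p^{\otimes(\delta+1)}) = 0$. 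The restriction--corestriction formula then gives $[\tilde M:M] \cdot a = 0$ for any $a$ in the target group, which forces $a=0$ as soon as $[\tilde M:M]$ is coprime to $p$, since $a$ is $p$-torsion.

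The main obstacle is the case when $[\tilde M:M]$ has a nontrivial $p$-part. To handle it, I would invoke the Bloch--Kato isomorphism $H^{\delta+1}(M,\mu_p^{\otimes(\delta+1)}) \cong \kk_{\delta+1}(M)$ together with Kato's filtration recalled in Section~\ref{sec K-th}. Applying the hypothesis to the totally tamely ramified Galois extensions $\tilde L(\zeta_n,\sqrt[n]{\pi})$ for $(n,p)=1$, the vanishing of the graded pieces $u^i/u^{i+1} \cong \Omega^\delta_{L(\zeta_n)}$ (for $0<i<\epsilon$ with $(i,p)=1$) forces $\Omega^\delta_L = 0$, equivalently $[L:L^p] \leq p^{\delta-1}$. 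Under this constraint, the Kato filtration on $\kk_{\delta+1}(M)$ collapses: every graded piece vanishes except possibly the top one, which is isomorphic to $H^\delta_p(L_M)$, where $L_M$ is the residue field of $M$. The claim thus reduces to showing $H^\delta_p(L_M) = 0$ for every finite separable extension $L_M/L$ of degree coprime to $p$.

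For this final vanishing, I would pass to the fixed field $\tilde L_p$ of a $p$-Sylow of $\mathrm{Gal}(\tilde L^{\mathrm{sep}}/\tilde L)$, using that $\cd_p(\tilde L) = \cd_p(\tilde L_p)$ and that the absolute Galois group of $\tilde L_p$ is pro-$p$, so the required bound $\cd_p(\tilde L)\leq\delta$ is equivalent to the single vanishing $H^{\delta+1}(\tilde L_p,\mathbb{F}_p) = 0$. Unfolding this as the direct limit $\varinjlim H^\delta_p(L_M)$ over residue fields of finite subextensions $M\subset\tilde L_p$, the desired vanishing would follow from the hypothesis (applied to the unramified tame Galois lifts of Galois closures of residue extensions) via a direct-limit argument exploiting that every cohomology class is eventually killed after passing to a sufficiently large subfield in the cofinal system of residue fields inside the residue field $L^p$ of $\tilde L_p$; this is the delicate technical step, precisely circumventing the $p$-part obstruction noted above.
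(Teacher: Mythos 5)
Your first step is fine and matches the paper: reduce via Proposition \ref{prop Serre} to prime-to-$p$ extensions $\tilde N/\tilde L$ containing $\zeta_p$, note that such an $\tilde N$ and its Galois closure $\tilde M$ are tamely ramified with separable residue extension, and kill everything by restriction--corestriction when $p\nmid[\tilde M:\tilde N]$. You also correctly identify the real difficulty, namely that $[\tilde M:\tilde N]$ may be divisible by $p$. But your proposed resolution of that case does not work. The claim that the hypothesis forces $\Omega^\delta_L=0$ (equivalently $[L:L^p]\leq p^{\delta-1}$) cannot be right: in the very application of Lemma \ref{lem coho dim} inside the proof of Theorem \ref{thm car p vers car 0}, one takes $\tilde L=\tilde K_{(p)}$, a totally ramified (in particular residue-field-preserving) extension of $\tilde K$, so the residue field of $\tilde L$ is $K$ itself, which may well satisfy $[K:K^p]=p^\delta$; yet the hypothesis of the lemma is verified there. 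The flaw in the deduction is that Kato's isomorphisms \eqref{eq Kato i}--\eqref{eq Kato iv} are statements about \emph{complete discrete} valuation fields, whereas $\tilde L$ and its finite extensions are only Henselian valued fields whose value group is in general not discrete (in the application it acquires unbounded tame and wild denominators). The whole point of the construction is precisely that deep ramification kills $\kk_{\delta+1}$ \emph{without} changing the residue field, so no conclusion about $\Omega^\delta_L$ can be extracted this way; the subsequent collapse of the filtration and the reduction to $H^\delta_p$ of residue fields therefore have no basis, and your final direct-limit step is, as you acknowledge, left open.

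The paper's proof handles the $p$-part by a different and much shorter device: writing the surjection of induced Galois modules $\mu_p^{\otimes(\delta+1)}[\tilde M/\tilde L]\twoheadrightarrow\mu_p^{\otimes(\delta+1)}[\tilde N/\tilde L]$ with kernel $F$, Shapiro's lemma turns the associated long exact sequence into
\[
H^{\delta+1}(\tilde M,\mu_p^{\otimes(\delta+1)})\rightarrow H^{\delta+1}(\tilde N,\mu_p^{\otimes(\delta+1)})\rightarrow H^{\delta+2}(\tilde L,F),
\]
where the left-hand group vanishes by hypothesis (the Galois closure $\tilde M$ stays tame with separable residue extension because the maximal such extension is Galois), and the right-hand group vanishes because $\cd(\tilde K)=\delta+1$ by Kato's theorem \cite[Cor.~to Thm.~3]{Kato}, hence $\cd(\tilde L)\leq\delta+1$. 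This sidesteps the divisibility of $[\tilde M:\tilde N]$ by $p$ entirely, with no appeal to the filtration on Milnor K-theory. To repair your argument you would need this (or an equivalent) ingredient; as written, the proposal has a genuine gap.
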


\begin{proof}
According to Proposition \ref{prop Serre}, it suffices to prove that $H^{\delta+1}(\tilde N,\mu_p^{\otimes (\delta+1)})=0$ for each finite extension of $\tilde N$ of $\tilde L$ of degree prime to $p$ and containing $\zeta_p$. For that purpose, denote by $\tilde M$ the Galois closure of $\tilde N/\tilde L$ and write the exact sequence of finite Galois modules over $\tilde L$:
$$0 \rightarrow F \rightarrow \mu_p^{\otimes (\delta+1)}[\tilde M/\tilde L] \rightarrow \mu_p^{\otimes \delta+1}[\tilde N/\tilde L] \rightarrow 0.$$
It induces an exact sequence of cohomology groups:
$$H^{\delta+1}(\tilde M,\mu_p^{\otimes (\delta+1)}) \rightarrow H^{\delta+1}(\tilde N,\mu_p^{\otimes (\delta+1)}) \rightarrow H^{\delta+2}(\tilde L,F).$$
Note now that the maximal tamely ramified extension (with separable residue field extension) $\tilde K^{\mathrm{tr}}/\tilde K$ is Galois. Moreover, since $\tilde N/\tilde L$ has degree prime to $p$, it is tamely ramified and has separable residue field extension. Thus, the extension $\tilde M/\tilde L$ has the same two properties, and is Galois, and hence $H^{\delta+1}(\tilde M,\mu_p^{\otimes (\delta+1)})=0$. Moreover, by \cite[Cor.~to Thm.~3]{Kato}, we have $\cd(\tilde K)= \delta+1$, so that $H^{\delta+2}(\tilde L,F)=0$. We deduce that $H^{\delta+1}(\tilde N,\mu_p^{\otimes (\delta+1)})=0$.
\end{proof}

\subsection{From higher to lower cohomological dimension}

Our final transfer principle for the cohomological dimension of fields involves the notion of a universal norm:

\begin{definition}
Let $L/K$ be an algebraic field extension and let $x \in K^\times$. We say that $x$ is a \emph{universal norm} for $L/K$ if $x\in N_{K'/K}(K'^\times)$ for every finite extension $K'$ of $K$ contained in $L$.
\end{definition}

\begin{theorem}\label{th countable}
Let $\delta\geq 1$ be an integer, $\ell$ a prime number and $K$ an $\ell$-special countable field of characteristic $0$ and with cohomological dimension $\delta$. For each $x \in K^\times$, there exists an algebraic extension $K_x$ of $K$ that has cohomological dimension $\leq (\delta-1)$ and for which $x$ is a universal norm.
\end{theorem}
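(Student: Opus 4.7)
The strategy is a countable inductive construction, in the spirit of the proofs of Proposition~\ref{prop unc} and Theorem~\ref{thm car p vers car 0}. Since $K$ is $\ell$-special of characteristic $0$, every algebraic extension of $K$ is itself $\ell$-special and contains $\mu_\ell$. Hence, by Proposition~\ref{prop Serre} and the Bloch-Kato theorem (established by Rost and Voevodsky), the condition $\cd(K_x)\leq\delta-1$ for an algebraic extension $K_x$ of $K$ reduces to the vanishing of $\K_\delta(K_x)/\ell = \varinjlim_n \K_\delta(K_n)/\ell$, for any presentation $K_x = \bigcup_n K_n$ as a union of a chain of countable algebraic extensions. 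So it suffices to construct such a tower $K=K_0\subset K_1\subset\cdots$ with $\varinjlim_n \K_\delta(K_n)/\ell=0$ and such that $x$ is a universal norm for every $K_n/K$.

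Using a diagonal enumeration $\sigma=(\sigma_1,\sigma_2):\N\to\N^2$ as in Section~\ref{sec unc}, and a given numbering of (a countable generating set of) $\K_\delta(F)/\ell$ for each countable $F$ appearing in the construction, I would inductively define $K_{n+1}$ from $K_n$ so that the $\sigma_2(n)$-th generator of $\K_\delta(K_{\sigma_1(n)})/\ell$ is trivialized in $K_{n+1}$ while $x$ remains a universal norm for $K_{n+1}/K$. Setting $K_x := \bigcup_n K_n$ then yields both conclusions of the theorem.

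The main obstacle---and the heart of the argument---is the induction step: given a countable $\ell$-special algebraic extension $F/K$ with $x$ a universal norm for $F/K$, and a single symbol $\alpha=\{a_1,\ldots,a_\delta\}\in \K_\delta(F)/\ell$, construct a finite algebraic extension $F'/F$ that trivializes $\alpha$ and preserves the universal norm property. The naive choice $F'=F(\sqrt[\ell]{a_1})$ kills $\alpha$ but need not preserve the norm condition on the new finite subextensions that arise. To control the norm, I would exploit the vanishing $\K_{\delta+1}(F)/\ell = H^{\delta+1}(F,\mu_\ell^{\otimes(\delta+1)})=0$ (a consequence of $\cd(F)\leq\delta$ and Bloch-Kato), which forces $\{a_1,\ldots,a_\delta,x\}=0$ in $\K_{\delta+1}(F)/\ell$. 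Combined with the projection formula for Milnor $\mathrm{K}$-theory norms (\S\ref{sec K-th}), this relation provides enough flexibility to modify the representative symbol by $\ell$-th powers and to pick, for each of the countably many finite subextensions $M$ of $F/K$ and for suitable lifts $\xi_M\in M^\times$ with $N_{M/K}(\xi_M)=x$, extra $\ell$-th roots to adjoin to $F(\sqrt[\ell]{a_1})$ so that $\xi_M$ becomes a norm from the enlarged extension. An exhaustion argument over the countable set of finite subextensions of $F/K$ then yields the required $F'$; making this simultaneous control of ``killing $\alpha$'' and ``keeping $x$ a norm'' precise is where I expect the bulk of the technical work to lie.
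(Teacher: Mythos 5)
Your global strategy (diagonal enumeration of symbols, killing them one at a time along a tower, and reducing $\cd(K_x)\leq\delta-1$ to the vanishing of $H^{\delta}(K_x,\mu_\ell^{\otimes\delta})$ via Proposition \ref{prop Serre}, Bloch--Kato and the fact that algebraic extensions of an $\ell$-special field are $\ell$-special) is the same as the paper's. But the induction step, which you yourself defer as ``where the bulk of the technical work lies'', \emph{is} the content of the theorem, and the route you sketch for it does not work. The vanishing of $\{a_1,\dots,a_\delta,x\}$ in $\K_{\delta+1}(F)/\ell$ cannot be converted, by projection-formula manipulations, modification of the symbol by $\ell$-th powers, or adjoining further $\ell$-th roots to $F(\sqrt[\ell]{a_1})$, into the statement that $x$ is a norm from a finite extension splitting $\{a_1,\dots,a_\delta\}$. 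Already for $\delta=2$ this equivalence is Suslin's theorem on reduced norms of cyclic algebras of prime degree, and for general $\delta$ it is exactly where the paper invokes the Rost--Suslin--Joukhovitski theory of norm varieties: Theorem 1.21(1-2) of \cite{SJ} produces a geometrically irreducible $\nu_{\delta-1}$-generic splitting variety $X$ for the symbol (this is where the hypotheses ``$\ell$-special'' and ``characteristic $0$'' are genuinely used, not merely to have $\mu_\ell\subset K$), and Theorem A.1 of \cite{SJ} gives the exact sequence identifying the kernel of cup product with the symbol as the subgroup generated by norms from residue fields of closed points of $X$. No elementary Kummer-theoretic substitute for this input is available.

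There is a second, independent gap concerning the universal-norm condition. Even granting the norm-variety input, one only gets that $x$ is a \emph{sum} of norms from several closed points, i.e.\ from several different finite extensions each splitting the symbol; this is useless for the tower, where one needs $x_i=N_{K_{i+1}/K_i}(x_{i+1})$ for a \emph{single} finite extension $K_{i+1}$ splitting the symbol, since it is the compatible chain of elements $x_i$ in finite extensions $K_i/K$ that makes $x$ a norm from every finite subextension of $K_x/K$. The paper obtains this from Theorem 1.21(3) of \cite{SJ} (the statement on $\overline{A}_0(X,\mathcal{K}_1)$), which lets one rewrite the sum of norms as a single norm from the residue field of one closed point. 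Your plan of keeping, for each of the countably many finite subextensions $M$ of $F/K$, a separate lift $\xi_M$ and adjoining roots ``so that $\xi_M$ becomes a norm from the enlarged extension'' has no mechanism playing this role, and moreover aims at the wrong condition: what must be arranged is that $x$ is a norm from every finite subextension of the enlarged field over $K$, not that the $\xi_M$ become norms from the enlarged field. (The paper also treats $\delta=1$ separately, taking $K_x=\overline{K}$ and using surjectivity of norms over fields of dimension $\leq 1$; your uniform scheme would need extra input even there.)
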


\begin{proof}
If $\delta=1$, we take $K_x$ to be the algebraic closure of $K$ and then the statement follows from the surjectivity of the norm $N_{L/K}:L^\times \to K^\times$ for every finite extension $L/K$, which is a well-known fact (cf.~\cite[Chapter II, \S3.1, Prop.~5]{SerreCohGal}).

From now on, we assume $\delta\geq 2$. Let $\mathcal{E}_K$ be the set of pairs $(L,a)$ such that $L$ is a finite extension of $K$ and $a$ a nonzero symbol in $H^\delta(L,\mu_\ell^{\otimes \delta})$. Since $K$ is countable, so is the set $\mathcal{E}_K$ and hence we can number its elements: $(L_1,a_1), (L_2,a_2), \dots$

Let us now inductively construct a sequence of pairs $(K_i,x_i)_{i \geq 0}$ in the following way. For $i=0$, we set $K_0=K$ and $x_0=x$. For $i \geq 0$, we let $n_{i}$ be the smallest integer such that $L_{n_{i}}\subset K_{i}$ and $a_{n_{i}}|_{K_{i}}\neq 0$. According to Theorem 1.21(1-2) of \cite{SJ}, there exists a geometrically irreducible projective generic splitting $\nu_{\delta-1}$-variety
$X_{i}$ over $K_{i}$ of dimension $\ell^{\delta-1}-1$ for the symbol $a_{n_{i}}|_{K_{i}}$ (see Definitions 1.10 and 1.20 of \cite{SJ}). Moreover, by Theorem A.1 of \cite{SJ}, we have an exact sequence:
$$\bigoplus_{p \in X_{i}\text{ closed}} K_{i}(p)^\times \xrightarrow{\bigoplus N_{K_{i}(p)/K_{i}}} K_{i}^\times \xrightarrow{a_{n_{i}}|_{K_{i}} \cup\,\cdot} \K_{\delta+1}(K_{i})/\ell. $$
Since $K$ has cohomological dimension $\delta$, we deduce that $x_{i}$ is the image of some element $\alpha_{i} \in \bigoplus_{p \in X_{i}\text{ closed}} K_{i}(p)^\times$ by the morphism $\bigoplus N_{K_{i}(p)/K_{i}}$. But, if we set:
$$A_0(X_{i},\mathcal{K}_1):=\mathrm{coker} \left( \bigoplus_{\substack{q \in X_{i} \\ \dim \overline{\{q\}}=1 } } \K_2(K_{i}(q)) \xrightarrow{\bigoplus\partial} \bigoplus_{p \in X_{i}\text{ closed}} K_{i}(p)^\times \right),$$
the morphism $\bigoplus N_{K_{i}(p)/K_{i}}$ factors through:
$$\overline{A}_0(X_{i},\mathcal{K}_1):=\mathrm{coker} \left( A_0(X_{i}\times X_{i},\mathcal{K}_1) \xrightarrow[]{(p_1)_*-(p_2)_*} A_0(X_{i},\mathcal{K}_1) \right).$$
Moreover, Theorem 1.21(3) of \cite{SJ} shows that there exist a closed point $p_{i}$ of $X_{i}$ and an element $\lambda_{i} \in K_{i}(p_{i})^\times$ such that its image via the inclusion:
$$ K_{i}(p_{i})^\times\hookrightarrow \bigoplus_{p \in X_{i}\text{ closed}} K_{i}(p)^\times,$$
has the same image as $\alpha_{i}$ in $\overline{A}_0(X_{i},\mathcal{K}_1)$. We deduce that:
$$x_{i}=N_{K_{i}(p_{i})/K_{i}}(\lambda_{i}). $$
We then set $K_{i+1}:=K_{i}(p_{i})$ and $x_{i+1}:=\lambda_{i}$. For the sequel, it will be important to note that for each $i \geq 0$, we have the following properties:
\begin{numcases}{}
K_i\subset K_{i+1},\label{case1}\\
x_{i}=N_{K_{i+1}/K_{i}}(x_{i+1}),\label{case2}\\
a_{n_i}|_{K_{i}}\neq 0,\label{case3}\\
a_{n_i}|_{K_{i+1}}=0.\label{case4}
\end{numcases}
In particular, \eqref{case3} and \eqref{case4} imply that the $n_i$'s are pairwise distinct.

Let us now introduce the field:
$$K_x:= \bigcup_{i\geq 0} K_i.$$
We claim that $K_x$ satisfies the conditions of the theorem. 

Indeed, equation \eqref{case2}, together with the equality $x_0=x$, shows that $x$ is a universal norm for $K_x/K$. It remains to show that $\mathrm{cd}(K_x)\leq \delta-1$. By Proposition \ref{prop Serre} and the Bloch-Kato conjecture, it is enough to prove that every nonzero symbol in $ H^\delta(K_x,\mu_\ell^{\otimes \delta})$ is trivial. Let $a$ be such a symbol. We can then find an integer $i \geq 0$ and a symbol $b \in H^\delta(K_i,\mu_\ell^{\otimes \delta})$ such that $b|_{K_x}=a$. Since the pair $(K_i,b)$ belongs to $\mathcal{E}_K$, we can find an integer $N$ such that $(K_i,b)=(L_N,a_N)$. Then we have two cases:
\begin{itemize}
    \item[(a)] If $N=n_{j_0}$ for some $j_0$, then we have $a_{n_{j_0}}|_{K_{j_0+1}}=0$ by equation \eqref{case4}, and hence $a=b|_{K_x}=a_{n_{j_0}}|_{K_x}=0$.
    \item[(b)] If $N\neq n_j$ for all $j$, since the $n_j$'s are pairwise distinct, there exists an integer $j_0>i$ such that $n_{j_0}>N$. Then $L_N=K_i\subset K_{j_0}$ and hence, by definition of $n_{j_0}$, we deduce that $a_N|_{K_{j_0}}=0$. Hence $a=b|_{K_x}=a_{N}|_{K_x}=0$.
\end{itemize}
\end{proof}

\begin{remarque}
The assumption that the field $K$ is $\ell$-special and of characteristic $0$ in Theorem \ref{th countable} is needed in the proof to apply the results of \cite{SJ}.

However, when $\delta=2$, splitting varieties can be made explicit and thus one can replace $K$ in Theorem \ref{th countable} by an \emph{arbitrary} countable field. More precisely, we have the following result (we thank Philippe Gille for asking this question). 
\end{remarque}

\begin{proposition}
Let $K$ be a countable field of cohomological dimension $2$. For each $x \in K^\times$, there exists an algebraic extension $K_x$ of $K$ that has cohomological dimension $\leq 1$ and for which $x$ is a universal norm.
\end{proposition}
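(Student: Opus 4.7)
The plan is to mimic the inductive construction of Theorem \ref{th countable} with $\delta=2$, replacing the generic splitting varieties of \cite{SJ}---whose use forces the $\ell$-special and characteristic-zero hypotheses---by the explicit Severi-Brauer varieties that are available for every nonzero symbol of degree~$2$. Concretely, I would enumerate as $\mathcal{E}_K=\{(L_n,\ell_n,a_n)\}_{n\geq 1}$ the countable set of triples in which $L_n/K$ is a finite extension, $\ell_n$ is a prime number, and $a_n$ is a nonzero symbol in $H^2(L_n,\mu_{\ell_n}^{\otimes 2})$ when $\ell_n\neq\mathrm{char}(K)$, respectively in $H^2_{\ell_n}(L_n)$ when $\ell_n=\mathrm{char}(K)$. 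Starting from $K_0:=K$ and $x_0:=x$, at step $i$ I would pick the smallest index $n_i$ such that $L_{n_i}\subset K_i$ and $a_{n_i}|_{K_i}\neq 0$, and let $A_i$ denote the cyclic central simple algebra of prime degree $\ell_{n_i}$ over $K_i$ corresponding to $a_{n_i}|_{K_i}$ (via the norm residue isomorphism in the coprime case, or via Artin-Schreier in the equal-characteristic case); as the symbol is nonzero, $A_i$ is a nonsplit division algebra, and its Severi-Brauer variety plays the role of the splitting variety.

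The crux of the argument is the following unconditional analogue of \cite[Thm.~1.21(3)]{SJ} for Severi-Brauer varieties of prime degree: for every $y\in K_i^\times$, there exist a maximal subfield $L$ of $A_i$ and $\lambda\in L^\times$ such that $y=N_{L/K_i}(\lambda)$. Since $K_i$ is algebraic over $K$ it still has cohomological dimension $\leq 2$, so the Merkurjev-Suslin theorem (combined with the vanishing of $H^3(K_i,\mu_{\ell_{n_i}}^{\otimes 3})=0$ in the coprime case, or its Kato analogue in equal characteristic) implies that the reduced norm $\nrd\colon A_i^\times \to K_i^\times$ is surjective. Writing $y=\nrd(a)$ with $a\in A_i^\times$, if $a\notin K_i$ then $K_i[a]$ is a commutative subfield of the division algebra $A_i$, necessarily of degree $\ell_{n_i}$ by primality, hence a maximal subfield of $A_i$, and $y=N_{K_i[a]/K_i}(a)$; otherwise $a\in K_i$ and $y=a^{\ell_{n_i}}$, in which case I would pick any maximal subfield $L$ of $A_i$ (which exists because $A_i$ is a nontrivial division algebra of prime degree) and again obtain $y=N_{L/K_i}(a)$.

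Applied to $y=x_i$, this produces $K_{i+1}:=L$ and $x_{i+1}:=\lambda$ with $x_i=N_{K_{i+1}/K_i}(x_{i+1})$; since $K_{i+1}$ is a maximal subfield of $A_i$ it splits $A_i$, so $a_{n_i}|_{K_{i+1}}=0$. Setting $K_x:=\bigcup_{i\geq 0} K_i$, the fact that $x$ is a universal norm for $K_x/K$ follows by immediate induction from the compatibility $x=N_{K_i/K}(x_i)$, and the bound $\mathrm{cd}(K_x)\leq 1$ is obtained exactly as in the proof of Theorem \ref{th countable}: Proposition \ref{prop Serre} together with the Bloch-Kato conjecture reduces the vanishing of $\mathrm{cd}(K_x)$ to the vanishing of every nonzero symbol in $H^2$ (or $H^2_p$) of any finite separable extension of $K_x$, and such a symbol descends to a pair in $\mathcal{E}_K$ which is eventually killed at the corresponding stage of the construction. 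The main obstacle is the norm-from-a-single-subfield statement of the previous paragraph, which replaces the appeal to \cite[Thm.~1.21(3)]{SJ} and exploits in an essential way that $A_i$ has prime degree, so that every nonscalar element generates a maximal subfield.
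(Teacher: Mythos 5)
Your construction in the case where the relevant prime differs from the characteristic is essentially the paper's own: the paper also enumerates the prime-torsion classes in the Brauer groups of the finite extensions of $K$, writes $x_i$ as a reduced norm of a representing algebra using the vanishing of $H^3$ over $K_i$ (\cite[Thm.~8.9.1]{GS}, \cite[Thm.~24.4]{SuslinNorm}), and then as a norm from a splitting field; for this last step the paper quotes \cite[Prop.~2.6.8]{GS}, which you reprove by hand via the observation that a nonscalar element of a division algebra of prime degree generates a maximal subfield. Your restriction of the enumeration to symbols (i.e.\ to cyclic algebras of prime degree) is harmless, and even convenient, since it keeps the squarefree-index hypothesis of Suslin's criterion visibly satisfied; killing all symbols suffices by Merkurjev--Suslin.

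The genuine gap is in positive characteristic. For an imperfect field $F$ of characteristic $p>0$, Kato's definition of $\cd_p(F)\leq 1$ requires, besides the vanishing of $H^2_p(L)$ for all finite extensions $L/F$, the condition $[F:F^p]\leq p$; Proposition \ref{prop Serre} only covers primes $\ell\neq p$, so killing degree-$2$ symbols over (separable) extensions of $K_x$ cannot by itself give $\cd(K_x)\leq 1$. Your tower only adjoins maximal subfields of division algebras, and nothing forces it to create enough inseparability: since $\cd_p(K)\leq 2$ only guarantees $[K:K^p]\leq p^2$, it may happen that $[K:K^p]=p^2$ and that every step of your construction is separable, in which case $[K_x:K_x^p]=p^2$ and hence $\cd_p(K_x)\geq 2$ no matter how many Brauer classes have been split. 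This is exactly why the paper, in characteristic $p$, interleaves at each step the purely inseparable extension $K_i'=K_i(\sqrt[p]{y_i})$, where $y_i$ comes from the maximal $p$-power root of $x_i$ (so that $x_i^{1/p}\in K_i'$ and the norm chain survives, the norm of a purely inseparable degree-$p$ extension being the $p$-th power map), and then solves the norm equation for $x_i^{1/p}$ over $K_i'$; this forces $K_x/K$ to have infinite inseparable degree, and Lemma \ref{lema p-base} combined with $[K:K^p]\leq p^2$ yields $[K_x:K_x^p]\leq p$. Relatedly, in the case $\ell_{n_i}=p$ your appeal to an unspecified ``Kato analogue'' of the reduced-norm criterion should be made precise: the paper uses Gille's theorem \cite[Thm.~6]{gille} together with the fact that $p$-torsion Brauer classes in characteristic $p$ are represented by cyclic algebras. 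Without the inseparable steps (and the accompanying bookkeeping of the norm condition through them), the proof as proposed only establishes the proposition for perfect fields, or for the separable cohomological dimension.
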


\begin{proof}
We argue as in Theorem \ref{th countable}. Define $\cal E_K$ as the set of triples $(L,\ell,a)$ such that $L$ is a finite extension of $K$, $\ell$ is a prime number and $a$ a nonzero element in $\Br(L)[\ell]$. This set is countable, so we can number its elements: $(L_1,\ell_1,a_1)$, $(L_2,\ell_2,a_2)$, \ldots

In the sequel, we construct inductively a sequence of pairs $(K_i,x_i)_{i \geq 0}$ satisfying $(K_0,x_0)=(K,x)$ and properties \eqref{case1} to \eqref{case4} where, for each $i$, the integer $n_i$ stands for the smallest one such that $L_{n_{i}}\subset K_{i}$ and $a_{n_{i}}|_{K_{i}}\neq 0$. The argument from Theorem \ref{th countable} then proves that every finite extension of $K_x:=\bigcup_{i\in\N} K_i$ has a trivial Brauer group and that $x$ is a universal norm for $K_x/K$. Moreover, when $K$ has characteristic $p>0$, our construction ensures that $[K_x:K_x^p]\leq p$, so that $\cd(K_x)\leq 1$.

In order to construct the sequence $(K_i,x_i)_{i \geq 0}$, we follow the proof of Theorem \ref{th countable}, but replacing the splitting variety $X_i$ by the Severi-Brauer variety $Y_i$ associated to $a_{n_{i}}|_{K_{i}}$. More precisely:
\begin{itemize}
\item If the characteristic of $K$ is $0$, since $a_{n_{i}}|_{K_{i}}\cup x_i=0 \in H^3(K_i,\mu_{\ell_{n_i}}^{\otimes 2})$, we have that $x_i$ is a reduced norm for a central simple algebra representing $a_{n_i}$ by \cite[Thm.~8.9.1]{GS} (cf. also \cite[Thm.~24.4]{SuslinNorm}). Then by \cite[Prop.~2.6.8]{GS} there exists a finite extension $K_{i+1}$ of $K_i$ such that $a_{n_{i}}|_{K_{i+1}}=0$ and $x_i=N_{K_{i+1}/K_{i}}(x_{i+1})$ for some $x_{i+1}\in K_{i+1}$.
\item If the characteristic of $K$ is $p>0$, then we need to take more steps:
\begin{itemize}
\item First we consider, if it exists, a maximal $n_0\in\N$ such that $x_i=y_i^{p^{n_0}}$ for some $y_i\in K_i$. Then $y_i$ belongs to a $p$-basis of $K_i$. If there is no such maximal $n_0$, then we take $y_i$ to be any non-trivial element of a $p$-basis of $K_i$. In any case, we define $K_i':=K_i(\sqrt[p]{y_i})$, so that in particular $x_i^{1/p}\in K_i'$.
\item If the characteristic of $K$ is not $\ell_{n_i}$, then we argue exactly as in the characteristic $0$ case in order to construct a finite extension $K_{i+1}/K_i'$ such that $a_{n_{i}}|_{K_{i+1}}=0$ and $x_i^{1/p}=N_{K_{i+1}/K_{i}'}(x_{i+1})$ for some $x_{i+1}\in K_{i+1}$, so that $x_i=N_{K_{i+1}/K_{i}}(x_{i+1})$.
\item If $K$ has characteristic $\ell_{n_i}$, then the argument is the same, but we replace the cup product by the wedge product and \cite[Thm.~8.9.1]{GS} by \cite[Thm.~6]{gille}. In order to apply this last result we also use the fact that every class in $\Br(L_{n_i})[\ell_{n_i}]$ can be represented by a cyclic algebra, cf.~\cite[Thm.~9.1.8]{GS}.
\end{itemize}
\end{itemize}
In order to prove that $[K_x:K_x^p]\leq p$, by Lemma \ref{lema p-base} here below we simply need to notice that $K_x/K$ contains, by construction, a purely inseparable extension of infinite degree.
\end{proof}

\begin{lemma}\label{lema p-base}
Let $K$ be a field of characteristic $p>0$ and assume that $[K:K^p]=p^n$ for some $n \geq 1$. Let $L/K$ be an algebraic extension of infinite inseparable degree. Then $[L:L^p]<p^n$.
\end{lemma}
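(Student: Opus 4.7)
The plan is to argue by contradiction, after reducing to the purely inseparable case.

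First I would replace $K$ by its separable closure $K_s$ inside $L$. Since a separable algebraic extension preserves a $p$-basis, we have $[K_s:K_s^p]=p^n$; and the hypothesis of infinite inseparable degree of $L/K$ is precisely the statement that $L/K_s$ is purely inseparable of infinite degree. So I may assume $L/K$ itself is purely inseparable of infinite degree. A limit argument on finite sub-extensions (each of which satisfies $[L_\alpha:L_\alpha^p]=p^n$ by the classical preservation of the imperfection degree under finite extensions) then shows that $[L:L^p]\leq p^n$.

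Suppose for contradiction that $[L:L^p]=p^n$. Picking a $p$-basis $\{b_1,\ldots,b_n\}$ of $L$ over $L^p$, set $K':=K(b_1,\ldots,b_n)$; since each $b_i$ is purely inseparable over $K$, the field $K'$ is a finite purely inseparable extension of $K$, whence $[K':K'^p]=p^n$, and $L=L^p(b_1,\ldots,b_n)=L^p\cdot K'$. Now $L/K'$ is itself a nontrivial purely inseparable extension (since $L/K$ is infinite while $K'/K$ is finite), so there exists $y\in L\setminus K'$ with $y^p\in K'$. The element $y^p$ lies in $L^p\cap K'$; moreover it cannot lie in $K'^p$, since $y^p=z^p$ with $z\in K'$ would force $y=z\in K'$ by injectivity of Frobenius on fields. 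Therefore $L^p\cap K'$ strictly contains $K'^p$, whence $[K':L^p\cap K']\leq p^{n-1}$. Combined with the standard compositum inequality $[L^p\cdot K':L^p]\leq[K':L^p\cap K']$, this gives $[L:L^p]\leq p^{n-1}$, contradicting $[L:L^p]=p^n$.

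The main background fact to invoke is that $[F:F^p]=[K:K^p]$ for any finite extension $F/K$ in positive characteristic, a classical preservation property of the imperfection degree; together with the fact that separable algebraic extensions also preserve a $p$-basis, this ensures both the reduction step and the equality $[K':K'^p]=p^n$ in the main argument. The delicate point is really to make sure that a $p$-basis of $L$ can be collected inside a single finite sub-extension $K'/K$; pure inseparability of $L/K$ makes this automatic.
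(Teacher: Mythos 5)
Your proposal is correct, and it shares the paper's skeleton: assume $[L:L^p]=p^n$, collect a $p$-basis $b_1,\dots,b_n$ of $L$ into the finite subextension $K'=K(b_1,\dots,b_n)$ of $K$ inside $L$, note $L=L^pK'$, and derive a contradiction. Where the two arguments part ways is the final mechanism. The paper stops once it knows $K'$ and $L$ share a $p$-basis: it invokes the criterion that an algebraic extension sharing a $p$-basis with the base field is separable, which contradicts the infinite inseparable degree of $L/K$. You avoid that separability criterion altogether: you first replace $K$ by its separable closure in $L$ (so that $L/K$ becomes purely inseparable, using that separable algebraic extensions preserve the $p$-basis), then produce $y\in L\setminus K'$ with $y^p\in K'\setminus K'^p$, deduce $K'^p\subsetneq L^p\cap K'$ and hence $[K':L^p\cap K']\le p^{n-1}$, and conclude via the compositum bound $[L^pK':L^p]\le[K':L^p\cap K']$ that $[L:L^p]\le p^{n-1}$, a purely numerical contradiction. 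In effect you prove by hand exactly the special case of the ``shared $p$-basis implies separable'' fact that the paper uses without proof, at the cost of the (routine) reduction to the purely inseparable case; this makes your argument a bit longer but more self-contained and elementary. The auxiliary facts you rely on --- invariance of the imperfection degree under finite and under separable algebraic extensions, that an algebraic extension is purely inseparable over the separable closure of the base, and the degree bound for a compositum over the intersection when that degree is finite --- are all standard and are applied correctly.
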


\begin{proof}
Since $L/K$ is algebraic, $[L:L^p]\leq p^n$. Assume by contradiction that $[L:L^p]= p^n$, and denote by $(\alpha_1,\dots,\alpha_n)$ a $p$-basis of $L$. Then there exists a finite extension $K'$ of $K$ in $L$ such that $(\alpha_1,\dots,\alpha_n)$ is a $p$-basis of $K'$. Since $K'$ and $L$ share a $p$-basis, the extension $L/K'$ is separable, which contradicts the assumption that $L/K$ has infinite inseparable degree. We deduce that $[L:L^p]<p^n$.
\end{proof}

\section{Applications}\label{sec 4}

In this section, we give several applications of the previous transfer principles for Galois cohomology. We start by proving the transfer principle for norm groups given by Theorem \ref{MTD} and then we settle Theorems \ref{MTE} and \ref{MTF} about Serre's conjecture II and its higher versions.

\subsection{A transfer principle for norm groups}

\begin{theorem}\label{th transfer principle car 0}
Let $m,n,q \geq 0$ be three integers with $n \geq m \geq 1$ and $K$ a field of characteristic 0 and with cohomological dimension $\leq n$. Let $Z$ be a $K$-variety and assume that there exists a countable subfield $K_0$ of $K$ and a form $Z_0$ of $Z$ over $K_0$ satisfying the following condition:
\begin{itemize}
\item[$(\star)$] For every countable subextension $L$ of $K/K_0$, and for every algebraic extension $M/L$ of cohomological dimension $\leq m$, we have $N_{q}(Z_{0,M}/M)=\K_{q}(M)$.
\end{itemize}
Then $N_{n-m+q}(Z/K)=\K_{n-m+q}(K)$.
\end{theorem}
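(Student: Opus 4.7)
The plan is to prove the theorem by induction on $d := n - m \geq 0$, using Theorem \ref{MTC} as the inductive engine. The base case $d = 0$ reduces directly to property $(\star)$: given a symbol $z = \{x_1,\ldots,x_q\} \in \K_q(K)$, Proposition \ref{prop unc} provides a countable subextension $L$ of $K/K_0$ containing the $x_i$ with $\cd(L) \leq n = m$; applying $(\star)$ with $M = L$ gives $z|_L = \sum_j N_{F_j/L}(w_j)$ with $Z_0(F_j) \neq \emptyset$, and standard base change (splitting each $F_j \otimes_L K$ into a finite product of finite extensions of $K$, which inherit $Z_0$-rational points) shows $z \in N_q(Z/K)$.

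For the inductive step ($d \geq 1$), fix a symbol $z = \{x_1, \ldots, x_{d+q}\} \in \K_{d+q}(K)$ and carry out three reductions. First, as in the base case, pass to a countable subextension $L \subset K$ containing $K_0$ and the $x_i$ with $\cd(L) \leq n$. Second, Lemma \ref{lem p Sylow} applied over $L$ reduces to showing $z|_{L_\ell} \in N_{d+q}(Z_0/L_\ell)$ for each prime $\ell$, where $L_\ell$ is an $\ell$-Sylow extension of $L$ --- a countable, $\ell$-special, characteristic $0$ field of cohomological dimension $\leq n$. Third, Theorem \ref{MTC} applied to $L_\ell$ and $x_1$ produces an algebraic extension $(L_\ell)_1/L_\ell$ of cohomological dimension $\leq n - 1$, still countable and $\ell$-special, for which $x_1$ is a universal norm.

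To invoke the induction hypothesis on $(L_\ell)_1$, observe that the transferred property $(\star)$ still holds for $(L_\ell)_1/K_0$: any countable subextension $L'$ is algebraic over $L' \cap L$, itself a countable subextension of $K/K_0$, so the original $(\star)$ applies to any algebraic extension of $L'$. The induction hypothesis for $d - 1$ therefore yields $\{x_2,\ldots,x_{d+q}\}|_{(L_\ell)_1} = \sum_j N_{F_j/(L_\ell)_1}(w_j)$ with $Z_0(F_j) \neq \emptyset$. Since Milnor $\K$-theory commutes with filtered colimits, this identity already holds over some finite subextension $E$ of $(L_\ell)_1/L_\ell$, for finite extensions $F_j^{(E)}/E$ with $Z_0(F_j^{(E)}) \neq \emptyset$. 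The universal norm property writes $x_1 = N_{E/L_\ell}(y_1)$ with $y_1 \in E^\times$, and two applications of the projection formula give
\[
z|_{L_\ell} \;=\; N_{E/L_\ell}\!\bigl(\{y_1,\{x_2,\ldots,x_{d+q}\}|_E\}\bigr) \;=\; \sum_j N_{F_j^{(E)}/L_\ell}\!\bigl(\{y_1|_{F_j^{(E)}}, w_j\}\bigr) \;\in\; N_{d+q}(Z_0/L_\ell),
\]
which closes the induction.

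The main obstacle is precisely this last descent: the induction hypothesis produces an identity over the infinite algebraic extension $(L_\ell)_1$, which must be pulled back to $L_\ell$ itself. The two ingredients making this work are the colimit description of Milnor $\K$-theory --- which realizes the identity over a finite subextension $E$ --- and the universal norm property of $x_1$, which turns $x_1$ into $N_{E/L_\ell}(y_1)$ and allows the projection formula to gather everything into norms from the $F_j^{(E)}/L_\ell$, inheriting the required $Z_0$-rational points. Apart from this, the proof is bookkeeping involving Proposition \ref{prop unc}, Lemma \ref{lem p Sylow}, Theorem \ref{MTC}, and standard base change for Milnor $\K$-theoretic norms.
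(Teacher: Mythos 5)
Your overall strategy is the paper's: reduce to countable, $\ell$-special fields, run an induction on $n-m$ powered by Theorem \ref{th countable}, realize the resulting norm identity over a finite subextension $E$, and combine the universal-norm property of $x_1$ with the projection formula; the base-change compatibility of norms (in characteristic $0$) and the pointwise form of Lemma \ref{lem p Sylow} that you use are also exactly what the paper uses. The only structural difference is that you interleave the countability reduction (Proposition \ref{prop unc}) into each inductive step instead of first proving the case $K=K_0$ and then descending once at the end, and this reorganization is what creates the one genuine problem in your write-up.

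The problem is the sentence verifying that $(\star)$ transfers to the pair $((L_\ell)_1,K_0)$: it is not true that a countable subextension $L'$ of $(L_\ell)_1/K_0$ must be algebraic over $L'\cap L$. A subfield of an algebraic extension of $L$ need not be algebraic over its intersection with $L$: take $L=\Q(s,t)$ with $s,t$ algebraically independent over $\Q$, let $\alpha$ be a root of $X^2-sX-t$ and $L'=\Q(\alpha)\subset L(\alpha)$; then $L'\cap L=\Q$ (the nontrivial $L$-automorphism $\alpha\mapsto s-\alpha$ fixes no nonconstant rational function of $\alpha$ alone), while $\alpha$ is transcendental over $\Q$. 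So, as written, hypothesis $(\star)$ for the inductive application is not verified, and for such ``transversal'' $L'$ it is not clear how to deduce it from the original $(\star)$ at all, since the relevant $M$ need not be an algebraic extension of any countable subextension of $K/K_0$. The repair is immediate: $(L_\ell)_1$ is countable and algebraic over $L$, so apply the induction hypothesis to $(L_\ell)_1$ taking the countable subfield to be $L$ (or $(L_\ell)_1$ itself) and the form to be $Z_{0,L}$; then every countable subextension $L'$ of $(L_\ell)_1/L$ contains $L$ and is algebraic over it, so every algebraic extension $M/L'$ of cohomological dimension $\leq m$ is an algebraic extension of $L$, and the original $(\star)$ applies verbatim. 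With this change of anchor your argument goes through and essentially coincides with the paper's proof, which sidesteps the issue by first treating the countable case $K=K_0$ (where $(\star)$ is anchored at the whole field) and only afterwards invoking Proposition \ref{prop unc} for the general case.
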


\begin{proof}
We prove first the particular case where $K=K_0$. For that purpose, we may assume, by Lemma \ref{lem p Sylow}, that $K$ is $\ell$-special for some prime number $\ell$. We then proceed by induction on $r:=n-m$. The result is obvious when $r=0$ since then $m=n$ and $K$ has cohomological dimension $\leq n$. We henceforth fix some $r_0\geq 1$, assume to have proven the theorem whenever $r<r_0$ and study the case when $r=r_0$.

To do so, consider a symbol $a:=\{a_1,\dots ,a_{r+q}\}\in \K_{r+q}(K)$. Introduce the field $K_{a_{r+q}}$ given by Theorem \ref{th countable}. It has cohomological dimension $\leq n-1$. By the inductive assumption, we have $N_{r+q-1}(Z_{K_{a_{r+q}}}/K_{a_{r+q}})=\K_{r+q-1}(K_{a_{r+q}})$. In particular, $\{a_1,\dots,a_{r+q-1}\}\in N_{r+q-1}(Z_{K'}/K')$ for some finite subextension $K'$ of $K_{a_{r+q}}/K$. This means that there exist finite extensions $K'_1, \ldots, K'_r$ of $K'$ and elements $z_1\in\K_q(K'_1), \ldots,$ $z_r\in\K_q(K'_r)$ such that:
\[\begin{cases} 
\forall\,i, \; Z(K'_i)\neq\emptyset, \\
\{a_1,\dots,a_{r+q-1}\} = \sum_i N_{K'_i/K'}(z_i).
\end{cases}\]
Now, since $a_{r+q}$ is a universal norm in $K_{a_{r+q}}$, we get that $a_{r+q}=N_{K'/K}(z_0)$ for some $z_0\in {K'}^\times$. Thus,
\begin{align*}
a&=\{a_1,\dots,a_{r+q-1},a_{r+q}\} = \{a_1,\dots,a_{r+q-1},N_{K'/K}(z_0)\}=N_{K'/K}(\{a_1,\dots,a_{r+q-1},z_0\})\\
&=N_{K'/K}(\{\sum_i N_{K'_i/K'}(z_i),z_0\})=\sum_i N_{K'/K}(N_{K'_i/K'}(\{ z_i,z_0\}))=\sum_i N_{K'_i/K}(\{ z_i,z_0\}),
\end{align*}
and hence $a \in N_{r+q}(Z/K)$, as wished.\\

Now we move on to the general case. Consider an element $a\in \K_{n-m+q}(K)$. There exists a countable subextension $K'$ of $K/K_0$ and an element $a' \in \K_{n-m+q}(K')$ such that $a'|_{K}=a$. By Proposition \ref{prop unc}, we can find a countable subextension $L$ of $K/K'$ that has cohomological dimension $\leq n$. Moreover, by assumption $(\star)$, we have $N_q(Z_{0,M}/M)=\K_q(M)$ for every algebraic extension $M$ of $L$ with cohomological dimension $\leq m$. By the case $K=K_0$ applied to $L$, we deduce that $N_{n-m+q}(Z_{0,L}/L)=\K_{n-m+q}(L)$. In particular, $a'|_{L}\in N_{n-m+q}(Z_{0,L}/L)$, and hence $a=a'|_{K}\in N_{n-m+q}(Z/K)$.
\end{proof}

A natural application of this transfer principle for norm groups concerns Kato and Kuzumaki's conjectures. To see that, let us recall the definition of the $C_i^q$ properties.

\begin{definition}[Kato-Kuzumaki, \cite{KK}]
Let $K$ be a field and let $i,q\geq 0$ be two integers. The field $K$ is said to have the $C_i^q$ property if, for each $n \geq 1$, for each finite extension $L$ of $K$ and for each hypersurface $Z$ in $\mathbb{P}^n_{L}$ of degree $d$ with $d^i \leq n$, one has $N_q(Z/L)=\mathrm{K}_q(L)$.
\end{definition}

Kato and Kuzumaki conjectured in \cite{KK} that a field has the $C_i^q$ property if, and only if, its cohomological dimension is $\leq q+i$. Even though it is known nowadays that these conjectures are false in general, they are still open questions for fields that appear naturally in arithmetic geometry. Indeed, the only known counterexamples are built by means of transfinite induction \cite{Merkurjev, CTM}. Moreover, some instances of these conjectures are known to hold for number fields and $p$-adic fields \cite{WittenbergKK}, function fields of complex varieties and fields of Laurent series over these function fields \cite{IzquierdoKK}, and function fields of $p$-adic curves \cite{ILAp}.\\

In this context, Theorem \ref{th transfer principle car 0} has the following immediate consequence.

\begin{corollary}\label{cor kk}
Let $i,m,n,q \geq 0$ be four integers with $n \geq m \geq 1$ and $i+q \geq m$. Let $K$ be a field of characteristic 0 and with cohomological dimension $\leq n$. Assume that: 
\begin{itemize}
\item[$(\diamond)$] For every large enough countable subfield $L$ of $K$, every algebraic extension $M/L$ of cohomological dimension $\leq m$ satisfies the $C_i^q$ property.
\end{itemize}
Then $K$ satisfies the $C_i^{n-m+q}$ property.
\end{corollary}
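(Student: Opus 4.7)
The plan is to apply Theorem \ref{th transfer principle car 0} to each hypersurface required in the definition of the $C_i^{n-m+q}$ property for $K$. Fix a finite extension $K'/K$, an integer $N\geq 1$, and a hypersurface $Z\subset \mathbb{P}^N_{K'}$ of degree $d$ with $d^i\leq N$; the goal is to prove $N_{n-m+q}(Z/K')=\K_{n-m+q}(K')$. Note that $\mathrm{char}(K')=0$ and $\cd(K')\leq n$.

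Let $K_{00}\subseteq K$ be a countable subfield witnessing the ``large enough'' clause in hypothesis $(\diamond)$. Since $Z$ is determined by finitely many coefficients and $K'/K$ is finite, I would pick a countable subfield $K_0'\subseteq K'$ containing $K_{00}$ along with a set of $K$-generators of $K'/K$, together with a form $Z_0\subset \mathbb{P}^N_{K_0'}$ of $Z$ of degree $d$. Theorem \ref{th transfer principle car 0} will then be applied with $K$ replaced by $K'$ and $K_0$ by $K_0'$.

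The main step is the verification of condition $(\star)$: for every countable subextension $L$ of $K'/K_0'$ and every algebraic extension $M/L$ with $\cd(M)\leq m$, one needs $N_q(Z_{0,M}/M)=\K_q(M)$. The strategy is to set $L_0:=L\cap K$ (a countable subfield of $K$ automatically containing $K_{00}$) and argue that $M$ is algebraic over $L_0$ with $\cd(M)\leq m$; hypothesis $(\diamond)$ applied to $L_0$ then ensures that $M$ satisfies the $C_i^q$ property, and applying it to the hypersurface $Z_{0,M}\subset \mathbb{P}^N_M$, whose degree bound $d^i\leq N$ survives base change, yields $N_q(Z_{0,M}/M)=\K_q(M)$. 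Theorem \ref{th transfer principle car 0} then concludes $N_{n-m+q}(Z/K')=\K_{n-m+q}(K')$.

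The main obstacle is ensuring that $M/L_0$ is algebraic, or more precisely that $L/(L\cap K)$ is algebraic: in general this can fail for an arbitrary countable subfield $L$ of a finite extension $K'$ of $K$. Overcoming this requires either enlarging $L$ within $K'$ so that it becomes algebraic over $L\cap K$ without breaking countability (for instance by incorporating a full set of $K$-conjugates of the generators of $K'/K$), or restricting the verification of $(\star)$ to the specific countable subextensions $L$ produced inside the proof of Theorem \ref{th transfer principle car 0} via Proposition \ref{prop unc}, which can be arranged to satisfy this algebraicity condition. Once this point is settled, the corollary reduces to a clean packaging of Theorem \ref{th transfer principle car 0} together with hypothesis $(\diamond)$.
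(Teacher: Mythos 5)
Your overall route is the intended one: the corollary is meant to follow by applying Theorem \ref{th transfer principle car 0} over each finite extension $K'/K$ to the given hypersurface, and you have correctly isolated the one non-trivial point, namely that condition $(\star)$ quantifies over countable subfields $L$ of $K'$ while hypothesis $(\diamond)$ only speaks about countable subfields of $K$, and that $L/(L\cap K)$ need not be algebraic. However, the way you propose to close this gap is not yet a proof, and your first suggested fix is actually false: adjoining a full set of $K$-conjugates of the generators of $K'/K$ does not force algebraicity over $L\cap K$. For instance, with $K=\mathbb{Q}(s,t)$ and $K'=K(\sqrt2)$, the countable field $L=\mathbb{Q}\bigl(\sqrt2,\,s+t\sqrt2\bigr)$ contains both conjugates of the generator $\sqrt2$, yet a short Galois-conjugation computation gives $L\cap K=\mathbb{Q}$, so $L$ is transcendental over $L\cap K$; the same phenomenon occurs inside any uncountable $K$ no matter which countable $K_0'$ you fix. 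Moreover, ``enlarging $L$'' cannot rescue a verbatim verification of $(\star)$: that condition demands the norm equality for \emph{every} countable $L$ and every algebraic $M/L$ with $\cd(M)\leq m$, and for a bad $L$ the correct enlargement $L'$ is in general transcendental over $L$, so $ML'$ is not an algebraic extension of $M$ and its cohomological dimension is uncontrolled; hence $(\diamond)$ genuinely gives no information about such $M$, and $(\star)$ as stated cannot be deduced from $(\diamond)$.

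The viable completion is your second suggestion, but it must be carried out rather than gestured at: one opens the proof of Theorem \ref{th transfer principle car 0} and observes that $(\star)$ is only ever invoked for the specific countable subextension $L$ produced by Proposition \ref{prop unc}, whose inductive construction can be augmented (in the spirit of the closure operator $\Theta$ of Lemma \ref{lem p-base}) by repeatedly adjoining the coordinates of all elements of $L$ with respect to a fixed $K$-basis of $K'$. This keeps $L$ countable and $\cd(L)\leq n$, and forces $L=(L\cap K)(e_1,\dots,e_r)$, i.e.\ $L$ finite over the countable field $L\cap K\supseteq K_{00}$; then any algebraic $M/L$ with $\cd(M)\leq m$ is algebraic over $L\cap K$, hypothesis $(\diamond)$ gives the $C_i^q$ property of $M$, and applying it to the hypersurface $Z_{0,M}$ (whose degree bound survives base change) yields the needed norm equality. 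Note also that when $K$ is countable---the case actually used in Corollary \ref{cor KK fgQ}---none of this is necessary, since one may take $K_0'=K'$, the only relevant $L$ is $K'$ itself, and every algebraic extension of $K'$ is algebraic over $K$.
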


In particular, we get the following two small direct applications:

\begin{corollary}\label{cor KK fgQ}
Let $K$ be a field with finite transcendence degree over $\mathbb{Q}$. If, for every $i\geq 1$, algebraic extensions of $K$ with cohomological dimension $i$ satisfy the $C_i^0$ property, then Kato and Kuzumaki's conjectures hold for $K$.
\end{corollary}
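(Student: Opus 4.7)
Since $K$ has finite transcendence degree $d$ over $\mathbb{Q}$, the field $K$ is algebraic over the countable field $\mathbb{Q}(x_1,\dots,x_d)$ and is therefore countable. The nontrivial direction of Kato and Kuzumaki's conjectures is ``$\mathrm{cd}(K)\le i+q\Rightarrow K$ is $C_i^q$'' (the converse being known via Bloch--Kato), so I may set $n:=\mathrm{cd}(K)$, assume $n<\infty$, and aim to prove $C_i^q$ for every pair $(i,q)$ with $i+q\ge n$. The case $i=0$ (and $q\ge n$) is exactly the $C_0^q$ statement recalled in the introduction as a consequence of Bloch--Kato.

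For $1\le i\le n$, I plan to apply Corollary~\ref{cor kk} with the choice of parameters $(i,m,n,q')=(i,\,i,\,n,\,0)$, which gives the conclusion $C_i^{n-i}$ for $K$. The point to verify is condition $(\diamond)$: that for every large enough countable subfield $L\subset K$, every algebraic extension $M/L$ with $\mathrm{cd}(M)\le i$ satisfies the $C_i^0$ property. The key simplification is that $K$ itself is countable, so choosing $L=K$ is admissible, and $(\diamond)$ becomes the statement that every algebraic extension $M/K$ with $\mathrm{cd}(M)\le i$ is $C_i^0$. For $1\le\mathrm{cd}(M)=j\le i$, this follows from the hypothesis applied at level $j$ (giving $C_j^0$) combined with the elementary monotonicity $C_j^0\Rightarrow C_i^0$ for $j\le i$ (the degree condition $d^i\le N$ selects a subfamily of the hypersurfaces already covered by $d^j\le N$). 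The case $\mathrm{cd}(M)=0$ is handled by Bloch--Kato, which yields $C_0^0$ and hence $C_i^0$ a fortiori.

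Having obtained $C_i^{n-i}$ for every $1\le i\le n$, I extend to the full range $i+q\ge n$ via two monotonicity principles. First, $C_i^q\Rightarrow C_i^{q+1}$: given a symbol $\{b_1,\dots,b_{q+1}\}\in\K_{q+1}(L)$ for $L/K$ finite and a hypersurface $Z$ of degree satisfying $d^i\le N$, the hypothesis $C_i^q$ writes $\{b_1,\dots,b_q\}$ as $\sum_\alpha N_{L_\alpha/L}(z_\alpha)$ with $Z(L_\alpha)\ne\emptyset$, and the projection formula yields $\{b_1,\dots,b_{q+1}\}=\sum_\alpha N_{L_\alpha/L}(\{z_\alpha,b_{q+1}|_{L_\alpha}\})\in N_{q+1}(Z/L)$. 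Second, $C_i^q\Rightarrow C_{i'}^q$ for $i'\ge i$, by the same degree-bound argument as above. Together these upgrade the previously obtained $C_i^{n-i}$ to $C_i^q$ for all $(i,q)$ with $i\ge 1$ and $i+q\ge n$. I expect the main (rather mild) obstacle to be the careful bookkeeping of hypothesis $(\diamond)$; without the countability of $K$, the condition would require handling algebraic extensions of arbitrary countable subfields of $K$ rather than of $K$ itself, and would be a genuinely stronger ask.
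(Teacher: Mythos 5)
Your proposal is correct and is essentially the paper's intended argument: the corollary is stated there as a direct application of Corollary \ref{cor kk}, exactly as you do, taking $m=i$, $q=0$, using the countability of $K$ (so that one may take $L=K$ when verifying condition $(\diamond)$) together with the elementary monotonicities $C_j^0\Rightarrow C_i^0$ for $j\leq i$ and $C_i^q\Rightarrow C_i^{q+1}$ via the projection formula, and Bloch--Kato for the case $i=0$. The only inaccuracy is the aside that the converse implication $C_i^q\Rightarrow \cd(K)\leq i+q$ is ``known via Bloch--Kato'': this holds for $i=0$ (and for $i\leq 2$ using norm and reduced-norm hypersurfaces), but is open in general; since the content of the corollary --- and your actual argument --- concern only the forward direction, this does not affect anything.
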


\begin{corollary}
Let $K$ be a countable $C_1$ field of characteristic $0$ and let $L$ be a finitely generated extension of $K$ of cohomological dimension $i$. If Kato and Kuzumaki's conjectures hold for algebraic extensions of $L$ with cohomological dimension $<i$, then Kato and Kuzumaki's conjectures also hold for $L$.
\end{corollary}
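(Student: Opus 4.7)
The plan is to apply Corollary \ref{cor kk} for the bulk of Kato and Kuzumaki's conjectures and to handle the boundary case $C_i^0$ separately via the classical Lang-Tsen theorem, using the hypothesis that $K$ is $C_1$. Since $K$ is countable and $L/K$ is finitely generated, $L$ itself is countable. The goal is to show that $L$ satisfies $C_{i_0}^{q_0}$ for every pair of nonnegative integers $(i_0,q_0)$ with $i_0+q_0 \geq i$. A standard projection-formula argument in Milnor K-theory shows that these properties are monotone in each of the two indices, so it suffices to establish the boundary cases $i_0 + q_0 = i$ and then propagate.

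For the boundary case with $q_0 \geq 1$ (which forces $i \geq 2$), I apply Corollary \ref{cor kk} with field $L$, $n = i$, $m = i-1$, and parameters $i' = i_0$ and $q' = q_0 - 1$. The constraint $i' + q' \geq m$ reduces precisely to $i_0 + q_0 \geq i$. Condition $(\diamond)$ is verified by taking the countable subfield $L_0 = L$ itself (allowed since $L$ is countable): any algebraic extension $M/L$ of cohomological dimension $\leq i-1$ satisfies Kato-Kuzumaki's conjectures by hypothesis, and in particular the $C_{i_0}^{q_0-1}$ property since $i_0 + q_0 - 1 \geq i - 1 \geq \mathrm{cd}(M)$. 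Corollary \ref{cor kk} then yields $L$ is $C_{i_0}^{q_0}$.

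For the remaining boundary case $(i_0, q_0) = (i, 0)$, I use that $K$ is $C_1$. Setting $t = \mathrm{trdeg}(L/K)$, the Lang-Tsen theorem combined with Lang-Nagata's stability of the $C_c$-property under finite extensions ensures that every finite extension of $L$ is $C_{\mathrm{cd}(K)+t}$: when $\mathrm{cd}(K) = 1$ this is immediate from $K$ being $C_1$, while when $\mathrm{cd}(K) = 0$ the field $K$ is algebraically closed in characteristic 0 and hence $C_0$. The classical dimension formula $\mathrm{cd}(L) = \mathrm{cd}(K) + t$ (valid for finitely generated extensions in characteristic zero) identifies $\mathrm{cd}(K)+t$ with $i$, so every finite extension of $L$ is $C_i$, whence $L$ satisfies $C_i^0$ in the Kato-Kuzumaki sense (even with rational points, a priori stronger than zero-cycles).

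The edge cases $i \leq 1$ require no use of Corollary \ref{cor kk}: if $i=0$, then $L$ is algebraically closed and the conclusion is trivial; if $i=1$, the argument of the previous paragraph gives $C_1^0$ for $L$, and the Bloch-Kato theorem provides $C_0^{q_0}$ for all $q_0 \geq 1$ (using $\mathrm{cd}(L) = 1 \leq q_0$), which together with monotonicity recover all required $C_{i_0}^{q_0}$ properties. No serious obstacle is anticipated: the main subtleties are correctly marshaling monotonicity and the dimension formula, and both the transfer principle via Corollary \ref{cor kk} and the classical Lang-Tsen/Lang-Nagata machinery do the heavy lifting.
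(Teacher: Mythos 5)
Your proposal is correct and is essentially the argument the paper intends: the corollary is stated there as a direct application of Corollary \ref{cor kk}, and your use of it with $n=i$, $m=i-1$ (taking the countable threshold subfield to be $L$ itself, which is legitimate since $L$ is countable), supplemented by Lang--Nagata/Tsen together with Serre's formula $\cd(L)=\cd(K)+\mathrm{trdeg}(L/K)$ for the $C_i^0$ boundary case (this is exactly where the $C_1$ hypothesis on $K$ is needed) and by Bloch--Kato for the low-dimensional cases, is precisely how the omitted details are meant to be filled in. The only blemish is the parenthetical claim that $q_0\geq 1$ forces $i\geq 2$ (it only forces $i\geq 1$), but this is harmless since you handle the cases $i\leq 1$ separately.
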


Now, in \cite{ILA}, we introduced several variants of Kato and Kuzumaki's $C_i^q$ properties, by replacing hypersurfaces of low degree by homogeneous spaces of linear algebraic groups. Let us recall one of them.

\begin{definition}
Let $q$ be a non-negative integer. We say that a field $K$ has the $C_{\HS}^q$ property if, for each finite extension $L$ of $K$ and for each homogeneous space $Z$ under a smooth linear connected algebraic group over $L$, one has $N_q(Z/L) = K_q^M(L)$.
\end{definition}

Using Theorem \ref{th transfer principle car 0}, we can recover one of the main results in \cite{ILA} with a whole new approach, which focuses on fields rather than groups:

\begin{corollary}\label{cor Serre I}
Let $q$ be a non-negative integer. Every characteristic $0$ field with cohomological dimension at most $q+1$ has the $C_{\HS}^q$-property.
\end{corollary}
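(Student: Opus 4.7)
My plan is to deduce the corollary directly from the transfer principle for norm groups (Theorem \ref{th transfer principle car 0}), choosing parameters so as to reduce a question over a field of cohomological dimension $q+1$ to a question over fields of cohomological dimension at most $1$, where Serre's conjecture I is available. Given $K$ of characteristic $0$ with $\cd(K) \leq q+1$, a finite extension $L/K$, and a homogeneous space $Z$ under a smooth linear connected $L$-group $G$, I may replace $K$ by $L$ (which still has characteristic $0$ and cohomological dimension $\leq q+1$) and reduce to showing $N_q(Z/K) = \K_q(K)$. Since $Z$, $G$ and the group action are all of finite presentation, they descend to some countable subfield $K_0 \subset K$, producing a $K_0$-form $Z_0$ of $Z$ which is itself a homogeneous space under a smooth connected linear $K_0$-group $G_0$.

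I would then invoke Theorem \ref{th transfer principle car 0} with $n := q+1$, with $m := 1$, and with the Milnor K-theory index appearing in that theorem set to $0$. The output conclusion $N_{n-m+0}(Z/K) = \K_{n-m}(K)$ reads precisely $N_q(Z/K) = \K_q(K)$, which is what we want. The hypothesis $(\star)$ to be checked then becomes: for every countable subextension $L$ of $K/K_0$ and every algebraic extension $M/L$ with $\cd(M) \leq 1$, the base change $Z_{0,M}$ admits a zero-cycle of degree $1$ over $M$. Since $M$ has characteristic $0$ it is perfect, and $Z_{0,M}$ is a homogeneous space under the smooth connected linear $M$-group $G_{0,M}$, so one is in the setting of Serre's conjecture I.

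The main (and really the only) delicate point is that Theorem \ref{conj Serre I} as recalled in the introduction concerns \emph{principal} homogeneous spaces, whereas the verification of $(\star)$ requires the statement that every homogeneous space under a connected linear group over a perfect field of cohomological dimension $\leq 1$ admits a rational point. This well-known strengthening of Steinberg's theorem is classical in characteristic $0$: one picks a geometric stabilizer $H$, descends it to a finite separable extension of $M$, and exploits the non-abelian cohomology sequence attached to $H \hookrightarrow G_{0,M} \twoheadrightarrow Z_{0,M}$, using the vanishing of $H^1$ of connected linear groups given by Steinberg's theorem to conclude (see for instance \cite[III.2.4]{SerreCohGal}). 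Granting this, the rational point of $Z_{0,M}$ supplies a zero-cycle of degree $1$, hypothesis $(\star)$ is satisfied, and Theorem \ref{th transfer principle car 0} immediately yields $N_q(Z/K) = \K_q(K)$. As $L$ and $Z$ were arbitrary, this establishes the $C_{\HS}^q$ property for $K$.
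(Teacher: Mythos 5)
Your argument is correct and is essentially the paper's own: the paper deduces the corollary by observing that Steinberg's and Springer's theorems (the latter cited exactly as you do, \cite[III.2.4, Thm.~3]{SerreCohGal}) give the $C^0_{\HS}$ property for fields of cohomological dimension $\leq 1$, and then applying Theorem \ref{th transfer principle car 0} with $n=q+1$, $m=1$ and the K-theoretic index equal to $0$. The only difference is that you spell out the routine steps the paper leaves implicit (passing to the finite extension $L$, descending $Z$ with its homogeneous-space structure to a countable subfield, and verifying hypothesis $(\star)$ via rational points over the perfect fields $M$ of cohomological dimension $\leq 1$).
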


Together with the converse statement, which was easily settled in \cite[Prop. 3.2]{ILA}, this result shows that property $C_{\HS}^q$ is a good replacement for Kato and Kuzumaki's $C_1^q$ property, in the sense that it characterizes perfect fields of cohomological dimension $\leq q+1$. In the particular case where $q=0$, it recovers the zero-cycle version of Serre's conjecture I (Theorem \ref{conj Serre I}), which was proved by Steinberg \cite{Steinberg}, as well as an extension to homogeneous spaces with nontrivial stabilizers due to Springer \cite[III.2.4, Thm.~3]{SerreCohGal}. Corollary \ref{cor Serre I} is therefore in some sense a generalization of Serre's conjecture I and of Springer's theorem to higher-dimensional fields.

Thanks to the transfer principles, we now view this corollary under a new light, as a direct consequence of the case $q=0$. Indeed, Steinberg's and Springer's Theorems imply that fields of cohomological dimension $\leq 1$ have the $C_{\HS}^0$ property. Then Theorem \ref{th transfer principle car 0} tells us immediately that fields of cohomological dimension $\leq q+1$ have the $C_{\HS}^q$ property.

\subsection{Higher Serre's conjecture II}

\subsubsection*{The $C_\mathrm{sc}^q$ property}
Given the last application from the previous section, it is natural to ask whether one can find good replacements for the $C_2^q$ property that would characterize fields with cohomological dimension $\leq q+2$, while recovering a version of Serre's conjecture II (Conjecture \ref{conj Serre II}) for higher-dimensional fields. This is the purpose of the following definition.

\begin{definition}
Let $q$ be a non-negative integer. We say that a field $K$ has the $C^q_{\mathrm{sc}}$ property if, for any finite extension $L/K$ and any principal homogeneous space $Z$ under a semisimple simply connected $L$-group $G$, we have $N_q(Z/L)=\K_q(L)$. 

Similarly, given a type $\Lambda$ in the classification of semisimple absolutely almost simple simply connected groups (e.g. $\Lambda=A_n$, or $\Lambda=E_6$, or $\Lambda={^1}\!A_n$), we say that a field $K$ has the $C^q_{\Lambda}$ property if, for any finite extension $L/K$ and any principal homogeneous space $Z$ under a simply connected isotypical $L$-group $G$ of type $\Lambda$, we have $N_q(Z/L)=\K_q(L)$.
\end{definition}

Recall that a simply connected $L$-group $G$ is said to be isotypical of type $\Lambda$ if it is isomorphic to a finite product of Weil restrictions of the form $R_{M/L}(H)$ with $H$ an absolutely almost simple simply connected $M$-group $H$ of type $\Lambda$ and $M/L$ a finite separable extension. Properties $C^q_{\mathrm{sc}}$ and $C^q_{\Lambda}$ are then easily related to each other in the following way:

\begin{proposition}\label{prop prod}
Let $q$ be a non-negative integer. Assume that a field $K$ has the $C^q_{\Lambda}$ property for every type $\Lambda$. Then $K$ has the $C^q_{\mathrm{sc}}$ property.
\end{proposition}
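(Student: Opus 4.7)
The plan is to reduce the general case to the isotypical one using the standard decomposition of semisimple simply connected groups and then combine the resulting norm group equalities via Lemma \ref{lem produit}.

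More precisely, fix a finite extension $L/K$ and a principal homogeneous space $Z$ under a semisimple simply connected $L$-group $G$. First I would invoke the classical decomposition of semisimple simply connected groups: $G$ is isomorphic to a finite direct product $G_1\times_L \cdots \times_L G_r$, where each $G_i$ is an isotypical simply connected $L$-group of a single type $\Lambda_i$ (that is, a product of Weil restrictions of absolutely almost simple simply connected groups of type $\Lambda_i$). Since principal homogeneous spaces under a direct product of algebraic groups split as a product of principal homogeneous spaces under each factor (via the natural bijection $H^1(L,G_1\times\cdots\times G_r)\cong \prod_i H^1(L,G_i)$), we obtain a corresponding decomposition $Z\simeq Z_1\times_L \cdots\times_L Z_r$, where $Z_i$ is a principal homogeneous space under $G_i$.

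Now I would apply the hypothesis: for each $i$ and for every finite extension $L'/L$, the base change $Z_{i,L'}$ is a principal homogeneous space under the isotypical simply connected $L'$-group $G_{i,L'}$ of type $\Lambda_i$, so the $C^q_{\Lambda_i}$ property (which $K$, and hence $L$, inherits automatically by passing to finite extensions) gives $N_q(Z_{i,L'}/L')=\K_q(L')$. With these equalities in hand, an iterated application of Lemma \ref{lem produit} to the factors $Z_1,\ldots,Z_r$ over $L$ yields $N_q(Z/L)=\K_q(L)$, as desired.

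There is really no serious obstacle here: the main ingredient is the product decomposition of semisimple simply connected groups, which is classical, together with the compatibility of principal homogeneous spaces with direct products, and Lemma \ref{lem produit} does the combinatorial work. The only point that deserves a brief justification is that $C^q_\Lambda$ passes to finite extensions, which is immediate from the definition since it is formulated over all finite extensions of $K$.
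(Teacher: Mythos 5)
Your argument is correct and is essentially the paper's proof: decompose $G$ into isotypical factors, decompose the torsor $Z$ accordingly, and combine the resulting norm-group equalities over all finite extensions via Lemma \ref{lem produit}. You merely make explicit the compatibility of torsors with direct products, which the paper leaves implicit.
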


\begin{proof}
This follows from Lemma \ref{lem produit} and the fact that any simply connected $K$-group $G$ is a product of Weil restrictions of semisimple almost simple simply connected groups.
\end{proof}

We are now ready to suggest the following higher variant of Serre's conjecture II:

\begin{conj}[Higher Serre's conjecture II]\label{higher Serre conj II text}
Let $q$ be a non-negative integer and $K$ a field. If $K$ has cohomological dimension at most $q+2$, then the field $K$ has the $C^{q}_{\mathrm{sc}}$ property.
\end{conj}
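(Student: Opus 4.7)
My plan is to prove the conjecture conditionally on Serre's conjecture~II for countable fields of characteristic~$0$, by combining the two transfer principles from Section~\ref{sec 3}. As preliminary reductions, Proposition~\ref{prop prod} allows me to fix a type $\Lambda$ and work with the $C^q_\Lambda$ property, while Lemma~\ref{lem p Sylow} lets me assume that the base field $K$ is $\ell$-special for some prime~$\ell$. Fix such a $K$ with $\cd(K) \leq q+2$ and a principal homogeneous space $Z$ under a semisimple simply connected $K$-group $G$; the goal is $N_q(Z/K) = \K_q(K)$.

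The characteristic-zero case is a direct application of Theorem~\ref{MTD}. Applied with parameters $n = q+2$, $m = 2$ and internal exponent~$0$, it gives exactly $n - m + 0 = q$, so it suffices to check hypothesis $(\star)$. Choose a countable subfield $K_0 \subset K$ over which both $G$ and a form $Z_0$ of $Z$ are defined. Then $(\star)$ reads: for every countable subextension $L/K_0$ and every algebraic extension $M/L$ with $\cd(M) \leq 2$, $Z_{0,M}$ admits a zero-cycle of degree~$1$. Since $M$ is countable of characteristic~$0$ and of cohomological dimension at most~$2$, this is precisely the zero-cycle version of Serre's conjecture~II over $M$, which is implied by the assumption. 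Theorem~\ref{MTD} then yields $N_q(Z/K) = \K_q(K)$.

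For the positive-characteristic case, I would reduce to the characteristic-zero case via Theorem~\ref{MTB}. A countability reduction modeled on the second half of the proof of Theorem~\ref{th transfer principle car 0} allows me to assume that $K$ itself is countable. Pick a complete discrete valuation field $\tilde K$ of characteristic~$0$ with residue field $K$ (e.g.\ a Cohen ring), and invoke Theorem~\ref{MTB} to obtain a totally ramified extension $\tilde K_\dagger/\tilde K$ of cohomological dimension $\leq q+2$ and with residue field still~$K$. The group $G$ admits a unique reductive lift $\tilde{\mathcal G}$ over $\mathcal O_{\tilde K_\dagger}$ (Demazure--Grothendieck), and the torsor $Z$ lifts to a torsor $\tilde Z$ under $\tilde G := \tilde{\mathcal G}_{\tilde K_\dagger}$. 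Now apply the characteristic-zero case already settled to $\tilde K_\dagger$ with the \emph{larger} exponent $q+1$, valid because $\cd(\tilde K_\dagger) \leq q+2 \leq (q+1)+2$: this yields $N_{q+1}(\tilde Z/\tilde K_\dagger) = \K_{q+1}(\tilde K_\dagger)$. Given a generating symbol $x = \{y_1, \ldots, y_q\} \in \K_q(K)$, form the lift $\tilde x := \{\pi, \tilde y_1, \ldots, \tilde y_q\} \in \K_{q+1}(\tilde K_\dagger)$ for $\pi$ a uniformizer, decompose $\tilde x = \sum_i N_{\tilde L_i/\tilde K_\dagger}(\tilde w_i)$ with $\tilde Z(\tilde L_i) \neq \emptyset$, and take the residue. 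After refining each $\tilde L_i$ to an unramified extension whose residue field $L_i$ receives a rational point of $Z$ (possible by Hensel on a smooth integral model of $\tilde Z$), the norm--residue compatibility in Milnor K-theory expresses $x = \partial\tilde x$ as a sum of norms from the $L_i/K$, establishing $x \in N_q(Z/K)$.

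The main obstacle is the descent step in positive characteristic. All the cohomological input fits neatly into Theorems~\ref{MTB} and \ref{MTD}, but the transition from ``$\tilde Z(\tilde L_i) \neq \emptyset$'' to ``$Z(L_i) \neq \emptyset$'' for a finite extension $L_i/K$ requires controlling the ramification of the $\tilde L_i$ and working with a smooth integral model of the torsor that has sufficiently nice reduction. An alternative route---probably cleaner, and closer in spirit to the proof of MTB itself---would be to establish a complete-DVR version of MTD that transfers norm groups from the fraction field directly to the residue field via the Kato isomorphisms of Section~\ref{sec K-th}, thereby avoiding lifting the torsor across characteristics altogether.
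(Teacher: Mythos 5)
The statement you are proving is stated in the paper as a conjecture; what the paper actually establishes is the conditional Theorem \ref{thm conditionnel}, and your proposal is exactly that conditional statement, so the framing is fine. Your characteristic-zero half coincides with the paper's: reduce to a fixed type via Proposition \ref{prop prod}, then apply Theorem \ref{th transfer principle car 0} with $n=q+2$, $m=2$, the hypothesis $(\star)$ being supplied by Serre's conjecture II (zero-cycle version) over countable characteristic-zero fields of cohomological dimension $\leq 2$. The positive-characteristic half, however, has a genuine gap at precisely the step you flag, and the tool you propose there is the wrong one. Hensel's lemma on a smooth integral model lifts points of the \emph{special} fibre to the generic fibre; it cannot take a $\tilde L_i$-point of $\tilde Z$ and produce a point of $Z$ over the residue field, because the torsor is affine, not proper, so a generic point need not extend to an integral point of the model and hence cannot be reduced. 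What is needed—and what the paper uses—is Nisnevich's theorem (the Grothendieck--Serre conjecture over discrete valuation rings): the map $H^1(\mathcal{O}_i,\mathcal{G})\to H^1(\tilde K_i,\tilde G)$ is injective, so triviality of the generic fibre forces triviality of the torsor over $\mathcal{O}_i$, whence a rational point of its special fibre. One also needs the bijectivity $H^1(A,\mathcal{G})\to H^1(K,G)$ (Grothendieck, Brauer III) to lift the torsor at all, and smoothness of the automorphism scheme of the Chevalley form (SGA3) to lift the group; you assert these lifts without justification, but that is a minor matter compared with the descent step.

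Separately, your detour through Theorem \ref{thm car p vers car 0} is both unnecessary and technically harmful. The paper does not use it here: it lifts $K$ to the fraction field $\tilde K$ of a Cohen ring, accepts $\cd(\tilde K)=q+3$ (Kato), and applies the characteristic-zero case with exponent $q+1$ directly over $\tilde K$, where all the auxiliary extensions $\tilde K_i$ are finite extensions of a complete discretely valued field, so uniformizers, Milnor residue maps, and Nisnevich's theorem apply verbatim. Since you too invoke the exponent-$(q+1)$ statement, which only requires $\cd \leq q+3$, passing to $\tilde K_\dagger$ buys you nothing, while it creates real problems: the valuation on $\tilde K_\dagger$ is no longer discrete (its value group contains $1/n$ for every $n$ prime to $p$), so there is no uniformizer and the residue map $\partial$ you want to apply is not defined at that level (you would have to redescend to a finite subextension of $\tilde K_\dagger/\tilde K$ anyway); its ring of integers is not a DVR, so Grothendieck--Serre does not apply directly; and Theorem \ref{thm car p vers car 0} requires a countable residue field, which is what forces your extra countability reduction. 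In the paper, that transfer principle is reserved for a different application, namely Theorem \ref{thm Serre II clasico}, where one genuinely needs rational points (not norm groups) and hence a lifted field of cohomological dimension $\leq 2$.
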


Before studying this conjecture in detail, we prove the following proposition, that shows that the converse holds for perfect fields (and that a partial converse holds for imperfect fields). In particular, the $C^{q}_{\mathrm{sc}}$ property should characterize perfect fields of cohomological dimension $\leq q+2$ and hence should be a good replacement for the $C_2^q$ property.

\begin{proposition}\label{1An} 
If a field $K$ has the $C^q_{{^1}\!A_n}$ property, then all of its finite extensions have separable cohomological dimension at most $q+2$. In particular, if $K$ is perfect, then its cohomological dimension is $\leq q+2$.
\end{proposition}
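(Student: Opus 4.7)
The plan is to reduce the cohomological dimension bound to a vanishing statement for Milnor $K$-symbols modulo $\ell$, and then to kill each such symbol by applying the $C^q_{{^1}\!A_{\ell-1}}$ property to a well-chosen $\mathrm{SL}_1(A)$-torsor. More precisely, fix a finite extension $L/K$ and a prime $\ell$. For $\ell \neq \mathrm{char}(K)$, Proposition \ref{prop Serre} reduces the problem to showing that $H^{q+3}(M,\mu_\ell^{\otimes(q+3)})=0$ for every finite separable extension $M/L$ of degree prime to $\ell$ containing $\mu_\ell$; for $\ell=\mathrm{char}(K)=p$, we need to show $H^{q+3}_p(M)=0$ for every finite separable $M/L$. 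By the Bloch-Kato conjecture (resp. the Bloch-Kato-Gabber theorem), these groups are generated by symbols $\{a_1,\dots,a_{q+3}\}$, so it suffices to kill an arbitrary such symbol.

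Given such a symbol, I would associate to the pair $(a_1,a_2)$ a central simple $M$-algebra $A$ of degree $\ell$, namely the cyclic algebra $(a_1,a_2)_{\zeta_\ell}$ when $\ell\neq\mathrm{char}(K)$ and the $p$-algebra $[a_1,a_2)_p$ when $\ell=p$. Let $G=\mathrm{SL}_1(A)$, which is an absolutely almost simple simply connected $M$-group of type ${^1}\!A_{\ell-1}$, and let $Z$ be the principal homogeneous space under $G$ given by the equation $\nrd = a_3$; the set $Z(M')$ is non-empty exactly when $a_3|_{M'}$ lies in $\nrd((A\otimes_M M')^\times)$. The classical projection formula for reduced norms then ensures that $\{a_1,a_2,a_3\}|_{M'}=0$ whenever $Z(M')\neq \emptyset$.

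Now apply the $C^q_{{^1}\!A_{\ell-1}}$ property of $K$ (which descends to $M$) to $Z$: this yields a decomposition $\{a_4,\dots,a_{q+3}\}=\sum_i N_{M_i/M}(z_i)$ with $Z(M_i)\neq \emptyset$ and $z_i\in\K_q(M_i)$. Combining with the projection formula for the norm map in Milnor $K$-theory gives
\[\{a_1,\dots,a_{q+3}\}=\sum_i N_{M_i/M}\bigl(\{a_1,a_2,a_3\}|_{M_i}\cdot z_i\bigr)=0,\]
since each $\{a_1,a_2,a_3\}|_{M_i}$ vanishes modulo $\ell$ by the previous step. This concludes the proof that $\mathrm{sd}_\ell(L)\leq q+2$ for every prime $\ell$, hence $\mathrm{sd}(L)\leq q+2$. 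The final assertion follows because a perfect field satisfies $[L:L^p]=1$ and has only separable algebraic extensions, so that $\mathrm{sd}(L)=\mathrm{cd}(L)$ for each finite extension $L/K$.

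The main obstacle is the case $\ell=\mathrm{char}(K)=p$: one must handle Brauer classes via $p$-algebras, identify $H^{q+3}_p(M)$ with $\kk_{q+3}(M)$ through Bloch-Kato-Gabber, and set up the analogue of the reduced-norm projection formula in this setting. These are classical but technical; otherwise, the strategy is uniform in $\ell$.
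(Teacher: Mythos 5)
Your proposal follows essentially the same route as the paper's proof: reduce via Proposition \ref{prop Serre} and the Bloch--Kato conjecture to killing an arbitrary symbol, attach to $(a_1,a_2)$ the cyclic algebra (resp.\ cyclic $p$-algebra) and the $\SL_1(A)$-torsor $\nrd_A=a_3$, use the reduced-norm criterion (Suslin's Theorem 24.4 in \cite{SuslinNorm}, resp.\ Gille's Theorem 6 in \cite{gille}) to kill the degree-$3$ part over the extensions furnished by the $C^q_{{^1}\!A_n}$ property, and conclude by the projection formula. The one point to correct is in your characteristic-$p$ sketch: Bloch--Kato--Gabber does \emph{not} identify $H^{q+3}_p(M)$ with $\kk_{q+3}(M)$; by definition $H^{q+3}_p(M)$ is the cokernel of $\mathfrak{p}^{q+2}_M$ on $\Omega^{q+2}_M$, so its generators are classes $x\,\frac{dy_1}{y_1}\wedge\cdots\wedge\frac{dy_{q+2}}{y_{q+2}}$ rather than Milnor symbols of length $q+3$. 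One therefore works with these generators directly, taking $A=[x,y_1)$ and the torsor $\nrd_A=y_2$, and replaces the Milnor-K projection formula by the compatibility of the trace on differential forms with the norm through the differential symbol $\psi^q:\K_q\to\nu(q)$ (Lemma 9.5.7 of \cite{GS}); with that substitution, which is exactly what the paper does, your argument goes through unchanged.
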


\begin{proof} Let $K$ be a field with the $C^q_{{^1}\!A_n}$ property. Fix a prime number $\ell$, and assume first that $\ell$ is different from the characteristic of $K$. Consider a finite extension $L$ of $K$ containing a primitive $\ell$-th root of unity and a symbol $\{a_1,\ldots,a_{q+3}\} \in H^{q+3}(L,\mu_{\ell}^{\otimes (q+3)})$. Let $A$ be the cyclic $L$-algebra $(a_1,a_2)_{\ell}$ and consider the $L$-variety $Z$ given by the equation $\mathrm{Nrd}_A(\mathbf{x}) = a_3$. It is a principal homogeneous space under the group $\SL_1(A)$. Since $K$ has the $C^q_{{^1}\!A_n}$ property, one can find finite extensions $L_1,\ldots,L_r$ of $L$ and elements $b_i\in \K_q(L_i)$ for $1\leq i\leq r$ such that:
$$\begin{cases} 
\forall\, i,\; Z(L_i)\neq \emptyset \\
\{a_4,\ldots,a_{q+3}\} = \sum_{i=1}^r N_{L_i/L}(b_i).
\end{cases} $$
By Theorem 24.4 of \cite{SuslinNorm}, the condition $Z(L_i)\neq \emptyset$ implies that the restriction of the symbol $\{a_1,a_2,a_3\} \in H^{3}(L,\mu_{\ell}^{\otimes 3})$ to $L_i$ is trivial. Hence:
$$\{a_1,\ldots,a_{q+3}\} = \sum_{i=1}^r N_{L_i/L}(\{a_1,a_2,a_3,b_i\}) = 0.$$
By the Bloch-Kato conjecture (\cite{Riou}), we deduce that the group $H^{q+3}(L,\mu_{\ell}^{\otimes (q+3)})$ is trivial. This being true for each finite extension $L$ of $K$ containing a primitive $\ell$-th root of unity, the field $K$ has $\ell$-cohomological dimension at most $q+2$ by Proposition \ref{prop Serre}.

Assume now that $\ell$ is equal to the characteristic of $K$. Consider a finite extension $L$ of $K$ and an element $x \frac{dy_1}{y_1} \wedge ... \wedge \frac{dy_{q+2}}{y_{q+2}} \in H^{q+3}_\ell(L)$. Let $A \in \mathrm{Br}(K)$ be the cyclic central simple algebra $ [x, y_1)$ and introduce the $\mathrm{SL}_1(A)$-torsor $Z$ given by the equation $\mathrm{Nrd}_A(\mathbf{x})=y_2$. Since $K$ has the $C^q_{{^1}\!A_n}$ property, one can find finite extensions $L_1,\dots,L_r$ of $L$ and elements $b_1 \in \K_q(L_1), \dots, b_r \in \K_q(L_r)$ such that:
$$\begin{cases} 
\forall i, \; Z(L_i)\neq \emptyset \\
\{y_3,\ldots,y_{q+2}\} = \sum_{i=1}^{r}  N_{L_i/L}(b_i) .
\end{cases} $$
According to Theorem 6 of \cite{gille}, the condition $Z(L_i)\neq \emptyset$ implies that: $$\mathrm{Res}_{L_i/L}\left( x \frac{dy_1}{y_1} \wedge \frac{dy_2}{y_2} \right)=0$$ for each $i$. By denoting by $\nu(q)_{L_i}$ the kernel of $\mathfrak{p}^q_{L_i}$ and by $\psi^q_{L_i}: \K_q(L_i)\rightarrow \nu(q)_{L_i}$ the differential symbol, and by using Lemma 9.5.7 of \cite{GS}, we get:
\begin{align*}
     x \frac{dy_1}{y_1} \wedge ... \wedge \frac{dy_{q+2}}{y_{q+2}}&=x \frac{dy_1}{y_1} \wedge\frac{dy_2}{y_2} \wedge  \left( \sum_i \mathrm{Tr}_{L_i/L}(\psi^q_{L_i}(b_i)) \right) \\&=\sum_i \mathrm{Tr}_{L_i/L}\left( \mathrm{Res}_{L_i/L}\left(x \frac{dy_1}{y_1} \wedge\frac{dy_2}{y_2}\right) \wedge  \psi^q_{L_i}(b_i)\right) = 0.
\end{align*}
We deduce that $H^{q+3}_\ell(L)=0$.
\end{proof}

\begin{remarque}
Proposition \ref{1An} shows that, if a field $K$ has the $C^q_{2}$ property, then all of its finite extensions have separable cohomological dimension at most $q+2$. Indeed, if $A$ is a cyclic algebra of index $d$, the equation $\mathrm{Nrd}_A(\mathbf{x}) = a$ has degree $d$ and $d^2$ variables, hence it naturally defines a degree $d$ hypersurface in $\bb P_K^{d^2}$.
\end{remarque}

\subsubsection*{Application of transfer principles to Higher Serre's conjecture II}

In order to study Conjecture \ref{higher Serre conj II text}, we may apply Theorem \ref{th transfer principle car 0} to torsors under semisimple simply connected groups and get the following conditional result:

\begin{theorem}\label{thm conditionnel}
If every countable field of characteristic $0$ and cohomological dimension $\leq 2$ has the $\Csc^0$ property (resp.~the $C_{\Lambda}^0$ property for $\Lambda$ a type in the classification of semisimple absolutely almost simple simply connected groups), then every field of cohomological dimension $\leq q+2$ has the $\Csc^q$ property (resp.~$C_\Lambda^q$ property), for any $q\geq 0$.
\end{theorem}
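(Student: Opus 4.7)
The plan is to split into two cases according to the characteristic of $K$. In characteristic $0$, the conclusion follows directly from Theorem \ref{th transfer principle car 0}; in characteristic $p>0$, one lifts to characteristic $0$ via Theorem \ref{thm car p vers car 0} and appeals to the characteristic $0$ case. Since finite extensions of a field of cohomological dimension $\leq q+2$ also have cohomological dimension $\leq q+2$, it suffices in each case to prove $N_q(Z/K)=\K_q(K)$ for $K$ itself and for any ssc (resp.\ isotypical of type $\Lambda$) $K$-torsor $Z$.

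For $K$ of characteristic $0$ and cohomological dimension $\leq q+2$, given a semisimple simply connected (resp.\ isotypical of type $\Lambda$) $K$-group $G$ and a $G$-torsor $Z$, I would choose a countable subfield $K_0\subset K$ over which $G$ and $Z$ admit forms $G_0$, $Z_0$, and apply Theorem \ref{th transfer principle car 0} with parameters $(n,m,q')=(q+2,2,0)$ in its notation, so that $n-m+q'=q$. The hypothesis $(\star)$ asks, for every countable subextension $L/K_0$ and every algebraic extension $M/L$ of cohomological dimension $\leq 2$, that $N_0((Z_0)_M/M)=\K_0(M)=\Z$. But such $M$ is automatically countable of characteristic $0$ of cohomological dimension $\leq 2$, so by the assumption of the theorem it has the $\Csc^0$ (resp.\ $C_\Lambda^0$) property; since $(Z_0)_M$ is a torsor under the ssc (resp.\ isotypical of type $\Lambda$) $M$-group $(G_0)_M$, the hypothesis is verified and the conclusion $N_q(Z/K)=\K_q(K)$ follows.

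For $K$ of characteristic $p>0$, I would fix $x\in \K_q(K)$ to be expressed as a norm. Using the arguments of Proposition \ref{prop unc}, construct a countable subextension $K_\infty\subset K$ of cohomological dimension $\leq q+2$ over which $G$, $Z$ and $x$ are all defined; since base change along $K_\infty\hookrightarrow K$ maps $N_q((Z_0)_{K_\infty}/K_\infty)$ into $N_q(Z/K)$ (via the projection formula for Milnor norms), it suffices to prove the equality over $K_\infty$. I then take a Cohen lift $\tilde K$ of $K_\infty$, a complete DVF of characteristic $0$ with residue field $K_\infty$, and apply Theorem \ref{thm car p vers car 0} to produce a totally ramified extension $\tilde K_\dagger/\tilde K$ of cohomological dimension $\leq q+2$, whose residue field is still $K_\infty$. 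Next I lift $G$ to a smooth ssc group scheme $\underline G$ over $\cal{O}_{\tilde K_\dagger}$ and $Z$ to an $\underline G$-torsor $\underline Z$, exploiting the bijection $H^1(\cal{O}_{\tilde K_\dagger},\underline G)\cong H^1(K_\infty,G)$ for ssc groups over Henselian local rings. The characteristic $0$ case applied to $\tilde K_\dagger$ yields $N_q(\underline Z_{\tilde K_\dagger}/\tilde K_\dagger)=\K_q(\tilde K_\dagger)$. To descend, I lift $x$ to $\tilde x\in U_q(\tilde K_\dagger)$ via the unit-residue isomorphism $U_q/U_q^1\cong \K_q(K_\infty)$, decompose $\tilde x=\sum_i N_{\tilde L_i/\tilde K_\dagger}(\tilde z_i)$ with $\underline Z(\tilde L_i)\neq\emptyset$, use smoothness of $\underline Z$ together with Hensel's lemma to produce $L_i$-points of $Z$ on the residue fields $L_i$ of $\tilde L_i$, and apply norm--residue compatibility in Milnor K-theory to obtain $x$ as an element of $N_q((Z_0)_{K_\infty}/K_\infty)$.

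The main difficulty is the characteristic $p$ step, for two reasons: (i) the lifting of the torsor from $K_\infty$ to $\cal{O}_{\tilde K_\dagger}$ must be performed with care, as Henselian triviality for ssc group schemes is delicate for imperfect residue fields and wild ramification; and (ii) the descent of the decomposition requires controlling the interaction of the norms $N_{\tilde L_i/\tilde K_\dagger}$ with the residue map and with the higher unit filtration $U_q^\bullet$ of Milnor K-theory, whose behaviour differs between tame and wild ramification and typically introduces ramification indices that must be absorbed before reducing back to $K_\infty$. The characteristic $0$ case, by contrast, is essentially immediate once Theorem \ref{th transfer principle car 0} is available, which illustrates the efficiency of the transfer principle for norm groups.
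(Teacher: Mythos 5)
Your characteristic $0$ step is correct and coincides with the paper's: apply Theorem \ref{th transfer principle car 0} with $(n,m)=(q+2,2)$, the hypothesis $(\star)$ being exactly the assumed $\Csc^0$ (resp.\ $C_\Lambda^0$) property of the countable, characteristic $0$, cohomological dimension $\leq 2$ fields $M$. The problems are in your characteristic $p$ step, which moreover takes a heavier route than the paper's. The paper does \emph{not} invoke Theorem \ref{thm car p vers car 0} here and needs no reduction to a countable $K_\infty$: it lifts $G$ and $Z$ over the Cohen ring $A$ (after reducing to one type $\Lambda$ and to $G=R_{L/K}(H)$ via Proposition \ref{prop prod} and Lemma \ref{lem produit}, so that the group can be lifted using smoothness of $\mathrm{Aut}(H_0)$), observes that $\tilde K$ has cohomological dimension $q+3$, applies the already-proved characteristic $0$ statement \emph{in degree $q+1$} to get $N_{q+1}(\tilde Z/\tilde K)=\K_{q+1}(\tilde K)$, writes the symbol $\{p,\tilde u_1,\dots,\tilde u_q\}$ as a sum of norms, and takes residues. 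This exploits precisely the flexibility of the higher norm-group formulation; your strategy of forcing the cohomological dimension back down to $q+2$ via a totally ramified extension is the one the paper reserves for Theorem \ref{thm Serre II clasico}, where rational points leave no alternative.

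Two steps of your descent, as written, would fail. First, from $\underline Z(\tilde L_i)\neq\emptyset$ you cannot get $Z(L_i)\neq\emptyset$ by ``smoothness plus Hensel's lemma'': Hensel's lemma lifts points of the closed fibre to integral points, not generic-fibre points to integral (hence closed-fibre) points. What is needed is the injectivity of $H^1(\mathcal{O}_{\tilde L_i},\underline G)\to H^1(\tilde L_i,\underline G)$, i.e.\ the Grothendieck--Serre conjecture, which the paper quotes from \cite{Nisnevich} for \emph{discrete} valuation rings; since $\tilde K_\dagger/\tilde K$ is an infinite totally ramified extension, the rings $\mathcal{O}_{\tilde L_i}$ are Henselian valuation rings with non-discrete value group, so you must first descend your data (the lifted torsor, the triviality over $\tilde L_i$, the norm decomposition) to a finite subextension $\tilde K'\subset\tilde K_\dagger$, which is a complete discretely valued field with residue field $K_\infty$, before citing \cite{Nisnevich}. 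Second, the final ``norm--residue compatibility'' gives you nothing as stated: your lift $\tilde x$ lies in $U_q(\tilde K_\dagger)=\ker\partial$, so applying $\partial$ to the decomposition $\tilde x=\sum_i N_{\tilde L_i/\tilde K_\dagger}(\tilde z_i)$ yields $0=0$; and the specialization map you would instead need is not compatible with norms of wildly ramified extensions in any quotable form. The repair is exactly the paper's device: pass to degree $q+1$ by multiplying with the class of a uniformizer, $\{\pi,\tilde x\}=\sum_i N_{\tilde L_i/\tilde K_\dagger}(\{\pi,\tilde z_i\})$ by the projection formula, and then use the compatibility of $N$ with $\partial$ (\cite[Prop.~7.4.1]{GS}) to recover $x=\partial\{\pi,\tilde x\}$ as a sum of norms from the residue fields $L_i$. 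With these two corrections (and a word on how the group itself is lifted, which requires the reduction to a single type $\Lambda$), your argument can be completed, but as it stands the characteristic $p$ case has genuine gaps.
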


\begin{proof}
Assume first that the fields we are considering are of characteristic 0. Then we can apply directly Theorem \ref{th transfer principle car 0} with $n=q+2$, $m=2$, and $Z$ an arbitrary torsor under a semisimple simply connected group. The result follows in this case.\\

We are left then with the case of positive characteristic, which will follow by an \textit{ad hoc} transfer principle that will reduce this case to the one of characteristic 0. By Proposition \ref{prop prod}, we may reduce our study to a fixed type $\Lambda$. And since finite extensions of a field have the same cohomological dimension, it will suffice to prove that, for any field $K$ of cohomological dimension $q+2$ and characteristic $p>0$, any type $\Lambda$ in the classification of semisimple absolutely almost simple simply connected groups and any principal homogeneous space $Z$ under a simply connected isotypical $K$-group $G$ of type $\Lambda$, we have $N_q(Z/K)=\K_q(K)$. 

We henceforth fix such a field $K$ and type $\Lambda$. Let $G$ be a semisimple simply connected isotypical $K$-group of type $\Lambda$. Using Lemma \ref{lem produit}, we may assume that $G=R_{L/K}(H)$ for some finite separable extension $L/K$ and an absolutely almost simple simply connected $L$-group $H$ of type $\Lambda$. Let $H_0$ be the Chevalley group over $\mathbb{Z}$ such that $H$ is a twisted form of $H_0$ over $L$. According to Proposition 5 of \cite[IX.2, Prop.~5]{Bourbaki}, there exists a complete discrete valuation ring $A$ that has $p$ as a uniformizer, whose fraction field $\tilde{K}$ has characteristic $0$ and whose residue field is $K$. Let $\tilde{L}$ be the unramified extension of $\tilde{K}$ with residue field $L$ and let $B$ be its valuation ring. By \cite[XXIV, Thm.~1.3]{SGA3}, the group scheme $\mathrm{Aut}(H_0)$ is smooth over $\mathbb{Z}$, and hence, by \cite[XXIV, Prop.~8.1]{SGA3}, the map $H^1(B,\mathrm{Aut}(H_0))\rightarrow H^1(L,\mathrm{Aut}(H_0))$ is surjective. We deduce that there exists a semisimple simply connected absolutely almost simple group $\mathcal{H}$ of type $\Lambda$ over $B$ whose special fiber is $H$. We denote by $\tilde{H}$ its generic fiber and we set $\mathcal{G}:=R_{B/A}(\mathcal{H})$ and $\tilde{G}:=R_{\tilde{L}/\tilde{K}}(\tilde{H})$.

Now let $Z$ be a torsor under $G$. By \cite[Thm.~11.7, Rem.~11.8]{Brauer3}, the map $H^1(A,\mathcal{G})\rightarrow H^1(K,G)$ is bijective, and hence there exists a $\mathcal{G}$-torsor $\mathcal{Z}$ lifting $Z$. We denote by $\tilde{Z}$ the generic fiber of $\mathcal{Z}$. Then \cite[Cor.~to Thm.~3]{Kato} shows that $\tilde{K}$ has cohomological dimension $q+3$. Since $\tilde K$ has characteristic 0, it has the $C_\Lambda^{q+1}$ property, hence we have:
$$N_{q+1}(\tilde{Z}/\tilde{K})=\K_{q+1}(\tilde{K}).$$ 
Thus, given a symbol $\{u_1,...,u_{q}\}$ in $\K_{q}(K)$ and some liftings $\tilde{u}_1,...,\tilde{u}_{q}\in A$ of $u_1,...,u_{q}$, we can find finite extensions $\tilde K_1,\ldots,\tilde K_r$ of $\tilde K$ and elements $x_i\in \K_{q+1}(\tilde K_i)$ for each $i$ such that:
$$\begin{cases} 
\forall\, i,\; \tilde{Z}(\tilde K_i)\neq \emptyset, \\
\{p,\tilde{u}_1,...,\tilde{u}_{q}\} = \sum_{i=1}^rN_{\tilde K_i/\tilde{K}}(x_i).
 \end{cases} $$  
Denote by $\cal{O}_{i},K_i$ the corresponding valuation rings and residue fields. By using the compatibility of the norm morphism in Milnor $\mathrm{K}$-theory with the residue maps (cf.~\cite[Prop.~7.4.1]{GS}), we deduce that $\{u_1,...,u_{q}\}$ is a sum of norms coming from the $K_i$'s. Moreover, for each $i\in \{1,...,r\}$, the Grothendieck-Serre conjecture for discrete valuation rings (\cite{Nisnevich}) implies that the restriction morphism $H^1(\mathcal{O}_{i},\mathcal{G}) \rightarrow H^1(\tilde{K}_i,\tilde{G})$ is injective, so that $\cal Z(\cal O_i)\neq\emptyset$. We deduce that $Z(K_i)\neq \emptyset$, so that $\{u_1,...,u_{q}\}\in N_{q}(Z/K)$. This being true for any symbol in $\K_{q}(K)$, we get: $$N_{q}(Z/K)=\K_{q}(K).$$ 
This proves that $K$ has the $C_\Lambda^q$ property, as wished.
\end{proof}

As a consequence, we get the following unconditional result:

\begin{corollary}\label{cor incond}
Let $K$ be a field of cohomological dimension $\leq q+2$ with $q \geq 0$. Then $K$ satisfies properties $C^{q}_{A_n}$, $C^{q}_{B_n}$, $C^{q}_{C_n}$, $C^{q}_{D_n}$, $C^{q}_{G_4}$, $C^{q}_{F_2}$ and $C^{q}_{E_8}$. 
\end{corollary}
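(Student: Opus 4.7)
The plan is to apply the type-refined version of Theorem \ref{thm conditionnel} one type $\Lambda$ at a time. Fix $\Lambda\in\{A_n,B_n,C_n,D_n,F_4,G_2,E_8\}$ (reading the subscripts $G_4,F_2$ in the statement as $G_2,F_4$). Theorem \ref{thm conditionnel} tells us that in order to conclude the $C_\Lambda^q$ property for every field of cohomological dimension $\leq q+2$, it suffices to establish the $C_\Lambda^0$ property for every countable field of characteristic $0$ and cohomological dimension $\leq 2$. So the whole task reduces to a verification in this last setting.

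Next, observe that the $C_\Lambda^0$ property for a field $L$ is weaker than the classical Serre conjecture II restricted to isotypical groups of type $\Lambda$: the former only asks for a zero-cycle of degree $1$ on each principal homogeneous space $Z$ under such a group, which is immediate as soon as $Z$ has a rational point. So I would deduce the $C_\Lambda^0$ property from Conjecture \ref{conj Serre II} for isotypical simply connected groups of type $\Lambda$ over countable characteristic~$0$ fields of cohomological dimension $\leq 2$. By definition, every such isotypical group is a product of Weil restrictions $R_{M/L'}(H)$ with $H$ absolutely almost simple simply connected of type $\Lambda$ and $M/L'$ finite separable. Applying Lemma \ref{lem produit} (together with the fact that $H^1(L',R_{M/L'}(H))=H^1(M,H)$ via Shapiro's lemma) and the stability of cohomological dimension under finite separable extensions, the problem reduces once more to Serre's conjecture II for absolutely almost simple simply connected groups of type $\Lambda$ over perfect fields of characteristic $0$ and cohomological dimension $\leq 2$.

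At this final stage I would simply quote the known unconditional cases of Serre's conjecture II over perfect fields: the classical types $A_n$, $B_n$, $C_n$, $D_n$ (no triality) and the exceptional types $F_4$ and $G_2$ are covered by Bayer-Fluckiger and Parimala \cite{BP}, while type $E_8$ is handled by Chernousov. Combining these inputs with Theorem \ref{thm conditionnel} gives the corollary. There is no real obstacle to overcome: all the work lies in Theorems \ref{MTE} and \ref{thm conditionnel}; here one just needs to be careful with the passage from absolutely almost simple groups to isotypical ones via Weil restriction, which is precisely why the statement is phrased in terms of \emph{isotypical} groups in the definition of $C_\Lambda^q$.
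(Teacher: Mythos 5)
Your overall scheme---apply the type-refined Theorem \ref{thm conditionnel} and then verify the $C^0_\Lambda$ property for countable characteristic $0$ fields of cohomological dimension $\leq 2$ by quoting known cases of Conjecture \ref{conj Serre II}---is exactly the paper's strategy, and for the types $A_n$, $B_n$, $C_n$, non-trialitarian $D_n$, $F_4$ and $G_2$ the appeal to \cite{BP} closes the argument just as in the paper. However, two of the advertised cases are not covered by your citations, and they are precisely the ones the paper singles out for separate treatment. First, the property $C^q_{D_n}$ concerns \emph{all} isotypical groups of type $D_n$, hence includes the trialitarian forms of $D_4$; by writing ``$D_n$ (no triality)'' you have silently dropped this case, and \cite{BP} does not cover it. The paper fills this hole with an additional proposition, stated and proved right after the corollary: over a characteristic $0$ field of cohomological dimension $\leq 2$, every torsor under an absolutely almost simple simply connected group of trialitarian type $D_4$ has a zero-cycle of degree $1$. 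Its proof is not a formal consequence of \cite{BP}: one uses Lemma \ref{lem p Sylow} with $q=0$ to reduce to a $3$-special field, shows there that the group is quasi-split (via a maximal torus split by the cyclic cubic extension and the triviality of $H^1(K,T)\to H^1(K,G_1)$), and only then invokes the known cases.

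Second, your assertion that ``type $E_8$ is handled by Chernousov'' is not a usable input: Chernousov's $E_8$ theorem is the Hasse principle over number fields, and Serre's conjecture II (with rational points) for $E_8$ over arbitrary fields of cohomological dimension $\leq 2$ is still open, so you cannot deduce $C^0_{E_8}$ from it. What is actually known, and what the paper uses, is the triviality statement over $\ell$-special fields of cohomological dimension $\leq 2$ (\cite[Thm.~8.4.1]{GilleLNM}); combined with Lemma \ref{lem p Sylow} for $q=0$ this yields exactly the zero-cycle property $C^0_{E_8}$ needed as the hypothesis of Theorem \ref{thm conditionnel}. So the reduction you set up is correct, but the verification of its hypothesis is genuinely missing in the two cases of trialitarian $D_4$ and $E_8$, and in both cases the fix requires an argument (reduction to $\ell$-special fields via Lemma \ref{lem p Sylow}, plus the quasi-splitness argument for trialitarian $D_4$) that goes beyond quoting \cite{BP} or Serre's conjecture II as known results.
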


In particular, the only remaining cases left concerning Conjecture \ref{higher Serre conj II text} are those about groups of types $E_6$ and $E_7$.

\begin{proof}
The result follows from Theorem \ref{thm conditionnel} and \cite{BP} for every case with the exception of $D_4$ in the trialitarian case and $E_8$. For $E_8$, we use \cite[Thm.~8.4.1]{GilleLNM} and Lemma \ref{lem p Sylow} for $q=0$ instead of \cite{BP}. In the case of trialitarian $D_4$, we use the following result, which is well-known to experts, but which seems not to be published anywhere.
\end{proof}

\begin{proposition}
Let $K$ be a field of characteristic $0$ and cohomological dimension $\leq 2$. Let $G$ be an absolutely almost simple simply connected group of trialitarian type $D_4$ and let $Z$ be a $G$-torsor. Then $Z$ admits a zero-cycle of degree 1.
\end{proposition}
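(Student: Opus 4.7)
The plan is to invoke Lemma \ref{lem p Sylow} with $q=0$: it suffices to prove that $N_0(Z_{K_p}/K_p) = \Z$ for every prime $p$, where $K_p$ denotes a $p$-Sylow closure of $K$. Since $K_p$ is $p$-special, every finite extension of $K_p$ has $p$-primary degree, so a zero-cycle of degree $1$ on $Z_{K_p}$ is equivalent to a $K_p$-rational point on $Z$. The task therefore reduces to producing such a rational point for each prime $p$.

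The group $G$ is classified by a class in $H^1(K, \mathrm{Aut}(G_0)) \cong H^1(K, G_0^{\ad} \rtimes S_3)$, with $G_0$ the split $D_4$; we denote by $[\sigma] \in H^1(K, S_3)$ its \emph{triality class}. The trialitarian hypothesis means that the Galois image of $[\sigma]$ in $S_3$ (well-defined up to conjugation) is either $\Z/3\Z$ (type ${^3}\!D_4$) or all of $S_3$ (type ${^6}\!D_4$), and in particular is never contained in any copy of $\Z/2\Z$. For any prime $p \neq 3$, the absolute Galois group of $K_p$ is pro-$p$, so its image in $S_3$ must be trivial or a $\Z/2\Z$, and hence $G_{K_p}$ is of type ${^1}\!D_4$ or ${^2}\!D_4$; Bayer-Fluckiger and Parimala \cite{BP} then give $Z(K_p) \neq \emptyset$. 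For $p = 3$, the Galois image in $S_3$ is a pro-$3$ subgroup, hence trivial or $\Z/3\Z$, so $G_{K_3}$ is of type ${^1}\!D_4$ (again handled by \cite{BP}) or of type ${^3}\!D_4$.

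The crux of the argument is thus the case of a ${^3}\!D_4$ group over the $3$-special field $K_3$. Here one can already produce a zero-cycle of degree $3$ on $Z_{K_3}$ by passing to the cyclic cubic Galois extension $L/K_3$ attached to the triality class: over $L$, the group $G_L$ is of inner type ${^1}\!D_4$ and $Z(L) \neq \emptyset$ by \cite{BP}. The main obstacle is then to upgrade this degree-$3$ zero-cycle to a genuine $K_3$-rational point, since no zero-cycle of degree coprime to $3$ is available over the $3$-special field $K_3$, and Bezout combinations are therefore useless. The expected resolution is to exploit the Rost invariant $r(Z) \in H^3(K_3, \Q/\Z(2))$ attached to the ${^3}\!D_4$-torsor $Z$: this invariant vanishes since $\cd(K_3) \leq 2$, and combining its vanishing with the classification of ${^3}\!D_4$-torsors via trialitarian algebras (à la Knus--Merkurjev--Rost--Tignol) one descends the point $z \in Z(L)$ back down to $K_3$, concluding that $Z(K_3) \neq \emptyset$.
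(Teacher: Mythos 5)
Your reduction via Lemma \ref{lem p Sylow} with $q=0$ is exactly the paper's first step, and it is carried out correctly: for $p\neq 3$ the group becomes non-trialitarian over the $p$-special field $K_p$ and \cite{BP} applies, so everything hinges on a group of type ${^3}\!D_4$ over the $3$-special field $K_3$, where a zero-cycle of degree $1$ indeed forces a rational point. The problem is that your treatment of this remaining case --- the entire difficulty of the proposition --- is not a proof but a hope. You observe that $Z(L)\neq\emptyset$ over the cyclic cubic extension $L$ and then assert that, since the Rost invariant of $Z$ vanishes (as $\cd(K_3)\leq 2$), ``the classification of ${^3}\!D_4$-torsors via trialitarian algebras'' lets one descend the $L$-point to $K_3$. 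No mechanism for this descent is given, and none is available as a black box: a torsor under a simply connected group that is trivialized by a cubic extension need not be trivial, and the triviality of the Rost kernel for \emph{arbitrary} (non-quasi-split) trialitarian $D_4$ groups is not a known theorem one can cite --- it is essentially equivalent to Serre's conjecture II for this type, which is precisely what remains open (this is why trialitarian $D_4$ is excluded from the unconditional rational-point statement in the paper and only the zero-cycle version is proved here). So the step you label ``the expected resolution'' is a genuine gap.

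The paper closes this gap differently: it first shows that over the $3$-special field the \emph{group itself} is quasi-split. Since the center of the split $D_4$ is $2$-torsion and $L$ is $3$-special, $H^1(L,G_0)\to H^1(L,G_0^{\ad})$ is surjective, and $H^1(L,G_0)=1$ by \cite{BP}; hence $G_L$ is not merely of inner type but \emph{split}, so $G$ admits a maximal $K_3$-torus $T$ split by the cyclic cubic $L$ (\cite[Prop.~3.2.11.(3)]{GilleLNM}). Writing $G$ as a twist of its quasi-split form $G_1$ by a class $\beta$ lifted to $H^1(K_3,G_1)$ (same $2$-torsion-center argument), a Steinberg-type theorem \cite[Thm.~2.4.1]{BGL-Steinberg-2.0} puts $\beta$ in the image of $H^1(K_3,T)$, and $H^1(K_3,T)\to H^1(K_3,G_1)$ is trivial by \cite[Cor.~5.5.2.(1)]{GilleLNM}; hence $G$ is quasi-split and the known quasi-split case finishes the proof. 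If you want to salvage your outline, you would need to supply an argument of comparable substance for the ${^3}\!D_4$ case; the vanishing of the Rost invariant alone, plus an appeal to trialitarian algebras, does not do it.
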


\begin{proof}
Applying Lemma \ref{lem p Sylow} with $q=0$, we immediately reduce to the case where either $K$ is $3$-special or $G$ is not trialitarian anymore. In the latter case, we are done by \cite{BP}. Thus, we may assume that $K$ is $3$-special and hence we are in the case of cyclic triality.

Let $L/K$ be the cyclic cubic extension such that $G_L$ is not trialitarian anymore. Then $G_L$ is an inner twist of the split group $G_0$ of type $D_4$, corresponding to a class $\alpha\in H^1(L,G_0^\ad)$. Now, since the center of $G_0$ is $2$-torsion and $L$ is $3$-special, we know that its cohomology is trivial and hence the map $H^1(L,G_0)\to H^1(L,G_0^\ad)$ is surjective. However, by \cite{BP} we know that $H^1(L,G_0)$ is trivial, and hence so is $\alpha$. This tells us that $G_L$ is split and then \cite[Prop.~3.2.11.(3)]{GilleLNM} shows that $G$ admits a maximal $K$-torus $T$ split by the cyclic cubic extension $L$.

Finally, let $G_1$ be the quasi-split form of $G$. By the same argument from above, we deduce that $G$ can be seen as a twist of $G_1$ defined by a class $\beta\in H^1(K,G_1)$. Applying \cite[Thm.~2.4.1]{BGL-Steinberg-2.0} to the embedding $T\to G$, we deduce that there exists an embedding $T\to G_1$ such that $\beta$ comes from $H^1(K,T)$. However, by \cite[Cor.~5.5.2.(1)]{GilleLNM}, we know that the map $H^1(K,T)\to H^1(K,G_1)$ is trivial, implying the triviality of $\beta$. We conclude then that $G$ is quasi-split and thus the result follows once again from \cite{BP}.
\end{proof}

\subsection{Classical Serre's conjecture II in positive characteristic}

To finish the article, we apply our transfer principles to prove that Serre's conjecture II in characteristic $0$ (over countable fields) implies Serre's conjecture II in positive characteristic:

\begin{theorem}\label{thm Serre II clasico}
Let $\Lambda$ be a type in the classification of semisimple absolutely almost simple simply connected groups. If Serre's conjecture II (Conjecture \ref{conj Serre II}) holds for torsors under semisimple simply connected groups of type  $\Lambda$ over characteristic 0 countable fields with cohomological dimension 2, then it holds for torsors under semisimple simply connected groups of type  $\Lambda$ over arbitrary fields.
\end{theorem}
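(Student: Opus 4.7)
The plan is to combine three ingredients: the reduction from arbitrary to countable fields provided by Proposition \ref{prop unc}, the transfer principle from positive to zero characteristic given by Theorem \ref{thm car p vers car 0}, and a lifting argument for $G$-torsors along a complete DVR, analogous to the one used in the proof of Theorem \ref{thm conditionnel}.

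Let $K$ be a field of cohomological dimension $\leq 2$ and $Z$ a torsor under a semisimple simply connected $K$-group $G$ of type $\Lambda$. Both $G$ and $Z$ are defined over some countable subfield $K_0\subset K$, and Proposition \ref{prop unc} produces a countable subextension $K_\infty$ of $K/K_0$ with $\mathrm{cd}(K_\infty)\leq 2$. Since a rational point of $Z_{K_\infty}$ automatically yields one of $Z$, we may replace $K$ by $K_\infty$ and assume $K$ is countable. The case $\mathrm{char}(K)=0$ is then immediate from the hypothesis, so we focus on the case $\mathrm{char}(K)=p>0$.

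In this case one uses \cite[IX.2, Prop.~5]{Bourbaki} to lift $K$ to a complete DVR $A$ with uniformizer $p$, fraction field $\tilde K$ of characteristic $0$ (still countable), and residue field $K$; by \cite[Cor.~to Thm.~3]{Kato} we have $\mathrm{cd}(\tilde K)\leq 3$. Theorem \ref{thm car p vers car 0} then provides a countable totally ramified extension $\tilde K_\dagger/\tilde K$ with $\mathrm{cd}(\tilde K_\dagger)\leq 2$; let $A_\dagger$ denote its ring of integers, which still has residue field $K$. One then lifts $(G,Z)$ to $A$ exactly as in the proof of Theorem \ref{thm conditionnel}: after decomposing $G$ as a product of Weil restrictions of absolutely almost simple simply connected groups of type $\Lambda$, which reduces the problem to a single such factor since $G$-torsors split along direct products, the smoothness of $\mathrm{Aut}(H_0)$ for the Chevalley group $H_0$ of type $\Lambda$ (\cite[XXIV, Thm.~1.3]{SGA3}) yields a smooth reductive lift $\mathcal G/A$ of $G$ that is isotypical of type $\Lambda$ on the generic fiber $\tilde G$, and the bijection $H^1(A,\mathcal G)\cong H^1(K,G)$ of \cite[Thm.~11.7, Rem.~11.8]{Brauer3} lifts $Z$ to a $\mathcal G$-torsor $\mathcal Z$ over $A$, with generic fiber $\tilde Z$.

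Applying the hypothesis to the countable characteristic $0$ field $\tilde K_\dagger$ of cohomological dimension $\leq 2$ and to the torsor $\tilde Z_{\tilde K_\dagger}$ under $\tilde G_{\tilde K_\dagger}$ (still of type $\Lambda$) produces a $\tilde K_\dagger$-point. By the Grothendieck--Serre conjecture for DVRs (Nisnevich, \cite{Nisnevich}), the restriction map $H^1(A_\dagger,\mathcal G_{A_\dagger})\to H^1(\tilde K_\dagger,\tilde G_{\tilde K_\dagger})$ is injective, so $\mathcal Z_{A_\dagger}$ is the trivial $A_\dagger$-torsor, and in particular admits an $A_\dagger$-point. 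Specializing to the special fiber (whose residue field is $K$) gives the desired rational point of $Z$. The main technical obstacle is ensuring that the type $\Lambda$ is preserved under all the lifting operations, which is precisely what forces the reduction to a single Weil restriction and the appeal to the smoothness of $\mathrm{Aut}(H_0)$; the other delicate point, namely gaining one unit of cohomological dimension when lifting from $K$ to $\tilde K$, is exactly what Theorem \ref{thm car p vers car 0} is designed to absorb.
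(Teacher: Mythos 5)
Your overall strategy (reduce to countable $K$ via Proposition \ref{prop unc}, lift to a complete discrete valuation ring with characteristic-$0$ fraction field $\tilde K$, pass to the totally ramified extension $\tilde K_\dagger$ given by Theorem \ref{thm car p vers car 0}, and descend a trivialization via Grothendieck--Serre and specialization) is the same as the paper's, but two of your steps fail as written. First, $\tilde K$ is \emph{not} countable: a complete discrete valuation ring with $p$ as uniformizer contains $\mathbb{Z}_p$, so $\tilde K$ contains $\mathbb{Q}_p$, and both $\tilde K$ and its algebraic extension $\tilde K_\dagger$ are uncountable. You therefore cannot apply the hypothesis, which is restricted to countable fields, directly to $\tilde K_\dagger$. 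The paper avoids this by ordering the argument differently: it first settles the characteristic-$0$ case of the theorem for \emph{arbitrary} fields (exactly by the countability reduction you perform in your first paragraph), and only then applies that already-established case to the uncountable field $\tilde L_\dagger$. Your argument is repaired the same way: the torsor over $\tilde K_\dagger$ is defined over a countable subfield, Proposition \ref{prop unc} produces a countable subextension of $\tilde K_\dagger$ of cohomological dimension $\leq 2$ containing it, the hypothesis gives a rational point there, and this point maps to a $\tilde K_\dagger$-point.

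Second, the ring of integers $A_\dagger$ of $\tilde K_\dagger$ is not a discrete valuation ring: by construction $\tilde K_\dagger$ contains $\sqrt[n]{\pi}$ for every $n$ prime to $p$, so its value group is dense in $\mathbb{Q}$ and $A_\dagger$ is a non-noetherian rank-one valuation ring. The injectivity of $H^1(A_\dagger,\mathcal G_{A_\dagger})\to H^1(\tilde K_\dagger,\tilde G_{\tilde K_\dagger})$ is therefore not covered by the cited result of Nisnevich \cite{Nisnevich}, which concerns discrete valuation rings. The paper sidesteps this by a finite-level descent, and you should do the same: since $H^1(\tilde K_\dagger,\tilde G)$ is the direct limit of the groups $H^1(\tilde K_1,\tilde G)$ over the finite subextensions $\tilde K_1/\tilde K$ of $\tilde K_\dagger$, the class is already trivial over some such $\tilde K_1$, whose ring of integers $B_1$ \emph{is} a complete discrete valuation ring with residue field $K$ (the extension being totally ramified); Grothendieck--Serre over $B_1$ then trivializes $\mathcal Z_{B_1}$, and specializing at the closed point yields the desired $K$-point of $Z$. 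With these two corrections your proof coincides with the paper's.
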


\begin{proof}
Let $K$ be a field with cohomological dimension $\leq 2$, and let $Z$ be a torsor under a semisimple simply connected group $G$ of type  $\Lambda$ over $K$. Since both $G$ and $Z$ are of finite type, the subfield $K_0$ of $K$ generated by the coefficients of the (finitely many) equations defining the group $G$, its group law, the torsor $Z$ and the action of $G$ on $Z$, is finitely generated, hence countable. Then there exist $K_0$-forms $G_0$ and $Z_0$, of $G$ and $Z$ respectively, defined by the same equations. According to Proposition \ref{prop unc}, we can find a countable extension $L$ of $K_0$ contained in $K$ and of cohomological dimension $\leq 2$. If $K$ has characteristic $0$, then, by assumption, $Z_{0,L}$ is trivial, and hence so is $Z$.

We henceforth assume that $K$ has characteristic $p>0$. As in the proof of Theorem \ref{thm conditionnel}, one then can find:
\begin{itemize}
    \item[$\bullet$] a complete discrete valuation ring $B$ that has $p$ as a uniformizer, whose fraction field $\tilde{L}$ has characteristic $0$ and whose residue field is $L$;
    \item[$\bullet$] a semisimple simply connected group $\mathcal{G}_0$ of type  $\Lambda$ over $B$ with generic fiber $\tilde{G}_0$ and special fiber $G_{0,L}$;
    \item[$\bullet$] a torsor $\mathcal{Z}_0$ under $\mathcal{G}_0$ with generic fiber $\tilde{Z}_0$ and special fiber $Z_{0,L}$.
\end{itemize}
Theorem \ref{thm car p vers car 0} allows us to consider a totally ramified extension $\tilde L_\dagger$ of $\tilde L$ with cohomological dimension $\leq 2$ and integer ring $B_\dagger$. By the case where $K$ has characteristic $0$, that we have already solved, the class $[\tilde Z_{0,\tilde L_{\dagger}}]\in H^1(\tilde L_\dagger,\tilde G_0)$ is trivial. Hence there exists a finite subextension $\tilde L_1/\tilde L$ of $\tilde L_\dagger$, with integer ring $B_1$ such that $[\tilde Z_{0,\tilde L_1}]$ is trivial in $H^1(\tilde L_1,\tilde G_0)$. Now, by the Grothendieck-Serre conjecture (cf.~\cite{Nisnevich}), we know that the map $H^1(B_1,\cal G_{0,B_1})\to H^1(\tilde L_1,\tilde G_{0,\tilde L_1})$ is injective, and hence $[\cal Z_{0,B_1}]$ is trivial as well. Finally, since $\tilde L_1$ has residue field $L$, the specialization of $\cal Z_{0,B_1}$ at the closed point is $Z_{0,L}$, which is then trivial. The torsor $Z$ is therefore also trivial.
\end{proof}

Again, as a consequence of Theorem \ref{thm Serre II clasico}, we deduce the following unconditional result from \cite{BP}.

\begin{corollary}
Let $K$ be a field of cohomological dimension at most $2$. Then $H^1(K,G)=1$ for every semisimple simply connected $K$-group with no factors of types $E_6$, $E_7$, $E_8$ or trialitarian $D_4$.
\end{corollary}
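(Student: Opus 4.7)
The plan is to combine Theorem \ref{thm Serre II clasico} with the known cases of classical Serre's conjecture II settled by Bayer-Fluckiger and Parimala \cite{BP} over perfect fields. The statement is about vanishing of $H^1(K,G)$ for a semisimple simply connected $K$-group $G$ without factors of the excluded types, so the strategy is a type-by-type reduction followed by a transfer from arbitrary fields to countable characteristic $0$ fields.

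First, I would decompose $G$ into isotypical factors. Any semisimple simply connected $K$-group is isomorphic to a direct product $\prod_i R_{L_i/K}(H_i)$ where each $L_i/K$ is a finite separable extension and each $H_i$ is absolutely almost simple simply connected of some type $\Lambda_i$. By Shapiro's lemma, $H^1(K,R_{L_i/K}(H_i))=H^1(L_i,H_i)$, and compatibility of $H^1$ with products reduces the problem to proving $H^1(L,H)=1$ for $H$ absolutely almost simple simply connected of a fixed type $\Lambda \in \{A_n, B_n, C_n, D_n\text{ (non-trialitarian)}, F_4, G_2\}$ over a finite extension $L$ of $K$. Since $L$ again has cohomological dimension $\leq 2$, this reduces the statement to Serre's conjecture II for torsors under groups isotypical of type $\Lambda$ over an arbitrary field of cohomological dimension $\leq 2$.

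Second, I would invoke Theorem \ref{thm Serre II clasico}, which applies to each type $\Lambda$ individually: it suffices to verify the vanishing of $H^1(K_0,G_0)$ when $K_0$ is a countable field of characteristic $0$ and cohomological dimension $\leq 2$, and $G_0$ is a semisimple simply connected $K_0$-group of type $\Lambda$. Such a field is automatically perfect, so Bayer-Fluckiger and Parimala's theorem \cite{BP} applies and gives the desired triviality of $H^1(K_0,G_0)$ for every $\Lambda$ in the above list. Combining the three steps—decomposition into isotypical factors, transfer to characteristic $0$, and the Bayer-Fluckiger--Parimala result—yields the corollary.

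No new obstacle arises beyond what has already been overcome in the paper: the genuine difficulty was the passage from positive to zero characteristic, which is handled by Theorem \ref{thm Serre II clasico} via the transfer principle of Theorem \ref{MTB}. The only care needed here is the isotypical decomposition argument, which ensures that Theorem \ref{thm Serre II clasico} can indeed be applied type by type (as explicitly noted in its statement) so that the excluded types $E_6$, $E_7$, $E_8$, and trialitarian $D_4$ never enter the reduction.
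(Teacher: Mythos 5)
Your proposal is correct and follows essentially the same route as the paper: the corollary is deduced by combining Theorem \ref{thm Serre II clasico}, applied type by type after the standard decomposition of a semisimple simply connected group into isotypical factors (Weil restrictions plus Shapiro's lemma), with Bayer-Fluckiger--Parimala's result over the automatically perfect countable characteristic-$0$ fields of cohomological dimension $\leq 2$. Note only that the hypothesis must be read as cohomological dimension $\leq 2$ (the stated ``$q+2$'' is a slip carried over from Corollary \ref{cor incond}), which is exactly how your argument uses it.
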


This result was already proved in \cite{BFT} in a completely different way based on a case by case study following the classification of simply connected semisimple groups instead of focusing on fields.

\end{document}